\renewcommand{\Pr}{\mathbb{P}}
\newcommand{\N}{\mathbb{N}}
\newcommand{\R}{\mathbb{R}}
\newcommand{\E}{\mathbb{E}}
\newcommand{\FF}{\mathcal{F}}
\DeclareMathOperator{\Var}{Var}
\newcommand\unnumberedfootnote[1]{ %
        \let\temp=\thefootnote %
        \renewcommand{\thefootnote}{}%
        \footnote{#1}%
        \let\thefootnote=\temp%
        \addtocounter{footnote}{-1}}
\newcommand{\abs}[1]{\lvert#1\rvert} 
\newcommand{\ind}[1]{\mathbbm{1}_{\{#1\}}} 
\newtheorem{theorem}{Theorem}
\newtheorem{proposition}{Proposition}[section]
\newtheorem{lemma}[proposition]{Lemma}
\theoremstyle{definition}
\newtheorem{remark}[proposition]{Remark}
\newtheorem{definition}[proposition]{Definition}
\numberwithin{equation}{section}
\begin{document}

\title{\LARGE Asymptotics of a Brownian ratchet for Protein
  Translocation}

\author{\sc by Andrej Depperschmidt\thanks{supported by the BMBF,
    Germany, through FRISYS (Freiburg Initiative for Systems biology),
    Kennzeichen 0313921.} and Peter
  Pfaffelhuber$\mbox{}^{\ast,}$\thanks{\emph{Corresponding author,}
    University of Freiburg, Abteilung Mathematische Stochastik,
    Eckerstr. 1, D -- 79104 Freiburg, Germany, email: {\tt
      p.p@stochastik.uni-freiburg.de}, Tel:
    +49--761 203 5667, Fax: +49--761 203 5661} \\[2ex]
  \emph{Albert-Ludwigs University Freiburg} \vspace*{-5ex}} \date{}


\maketitle
\unnumberedfootnote{\emph{AMS 2000 subject classification.} 92C37
  (Primary) 60J65, 60G55, 60K05 (Secondary).}

\unnumberedfootnote{\emph{Keywords and phrases.} Brownian motion,
  reflected Brownian motion, cumulative process, protein
  translocation, ratchet mechanism}

\maketitle

\begin{abstract} 
  \noindent Protein translocation in cells has been modelled by
  \emph{Brownian ratchets}.  In such models, the protein diffuses
  through a nanopore. On one side of the pore, ratcheting molecules
  bind to the protein and hinder it to diffuse out of the pore. We
  study a Brownian ratchet by means of a reflected Brownian motion
  $(X_t)_{t\geq 0}$ with a changing reflection point $(R_t)_{t\geq
    0}$. The rate of change of $R_t$ is $\gamma(X_t-R_t)$ and the new
  reflection boundary is distributed uniformly between
  $R_{t-}$ and $X_t$.
  The asymptotic speed of the ratchet scales with $\gamma^{1/3}$ and
  the asymptotic variance is independent of $\gamma$.
\end{abstract}
  
\bibliographystyle{chicago}

\section{Introduction}
\label{sec:introduction}
Brownian motion is a model of thermal fluctuations of small
particles. If a particle moves according to such fluctuations, it is
known to undergo undirected movement, i.e.\ Brownian motion is a
martingale. However, the idea to use random fluctuations in order to
force a particle in one direction is tempting and led to the paradox
of the Brownian ratchet as introduced by \citet[Chapter
46]{FeynmanLeightonSWands1963}.

Quantitative models of Brownian ratchets for biological mechanisms
were introduced in \citet{SimonPeskinOster1992} and
\citet{PeskinOdellOster:1993}. While the \emph{polymerisation ratchet}
serves as a model of growth of polymers against a barrier, we focus on
the \emph{translocation ratchet}, a model for protein transport.
Here, a polymer moves \emph{in} and \emph{out} through a nanopore by
thermal fluctuations. On the \emph{in}-side of the nanopore, molecules
may bind to the polymer and bound sites along the polymer are
forbidden to move through the nanopore; see Figure \ref{fig:illu} for
an illustration.

A prominent example of a translocation ratchet was suggested on the
basis of empirical data for Prepro-$\alpha$ Factor as protein and BiP
as the ratcheting molecule by \citet{Matlack1999}: after translation,
proteins have to be transported into the lumen of the endoplasmatic
reticulum (ER) where they are e.g.\ cut and folded in order to
function properly.  In general, such ratcheting molecules can be
chaperones which bind to the translocated protein \emph{in}-side the
ER lumen. The resulting translocation ratchet has been studied and
compared to data from \citet{Matlack1999} by
\citet{LiebermeisterRapoportHeinrich:2001} and \cite{Elston2002}. In
these models, a discrete set of sites \emph{in}-side the ER lumen is
either bound or unbound with BiP. The protein diffuses to either side
of the nanopore with equal chances. Each unbound site of the protein
may become bound by BiP and each bound BiP molecule may dissociate
from the prepro-$\alpha$ Factor. Finally, when the protein is
completely \emph{in}-side the ER lumen, it is released.

In mathematical models of \emph{translocation ratchets}, several
parameters have to be specified: the distance of sites along the
protein which can be either bound or unbound with ratcheting
molecules; the rate and location of association and rate
of dissociation of ratcheting molecules. The case of a large
association rate and a fixed distance of possible ratcheting sites was
studied in \cite{BudhirajaFricks2006}. (This ratchet is similar to the
polymerisation ratchet of \citealp{PeskinOdellOster:1993}.)
\citeauthor{BudhirajaFricks2006} obtain a Law of Large Numbers and a
Central Limit Theorem for the speed of protein translocation if the
protein diffuses through a Brownian motion with drift.  In the present
paper, we study a translocation ratchet for a continuum of possible
ratcheting sites, in the limit of small ratcheting molecules, small
dissociation rates and long proteins. In this model, we can compute a
Law of Large Numbers (Theorem \ref{T:LLN}) and a Central Limit Theorem
(Theorem \ref{T:CLT}) for the speed of protein translocation.

\begin{figure}
  \hspace{5cm} (A) \hspace{7cm} (B)
  \begin{center}
    \parbox{6cm}{\includegraphics[width=8cm]{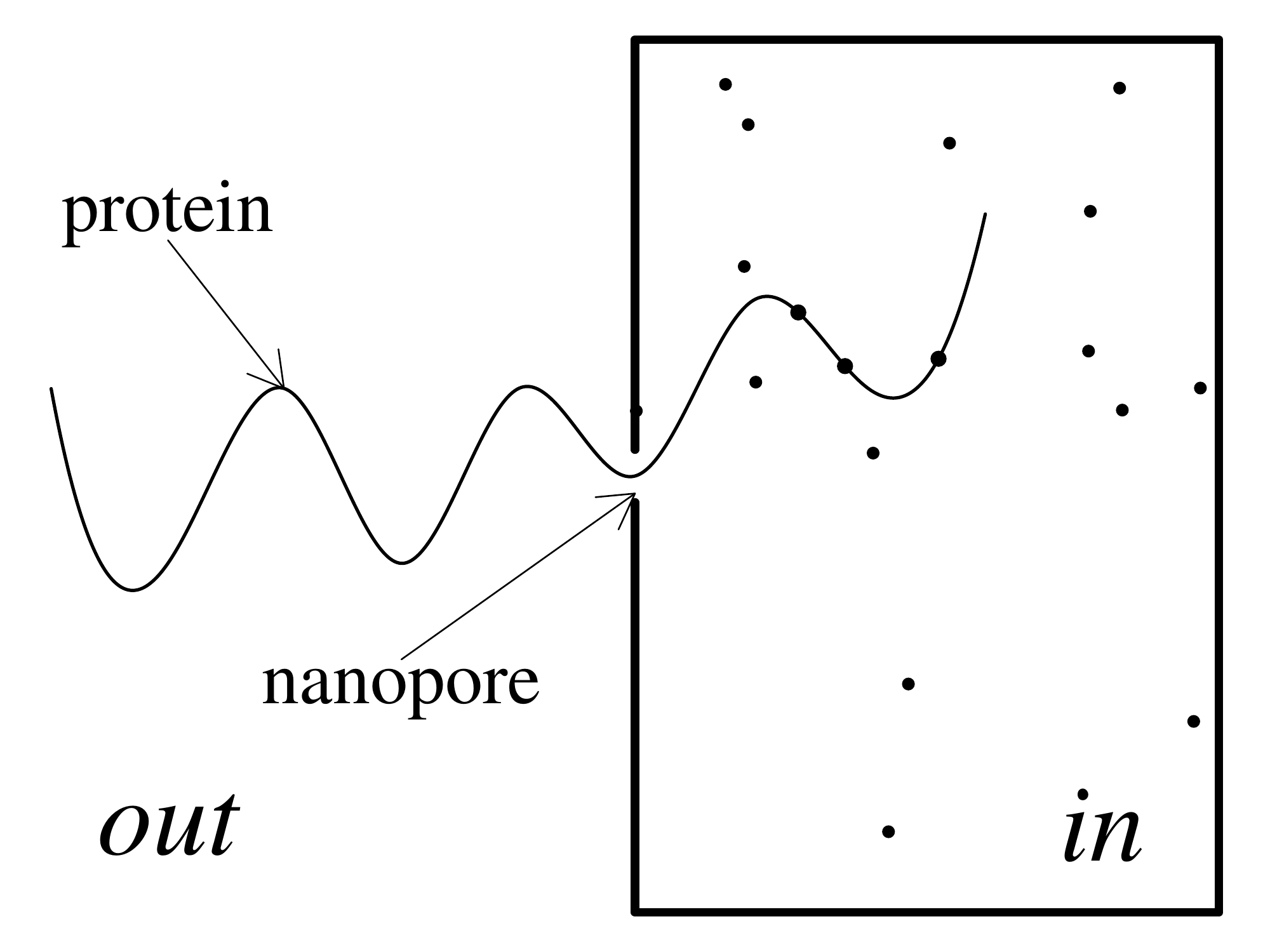}}
    \hspace{3cm}
    \parbox{4cm}{
      \includegraphics[width=5cm]{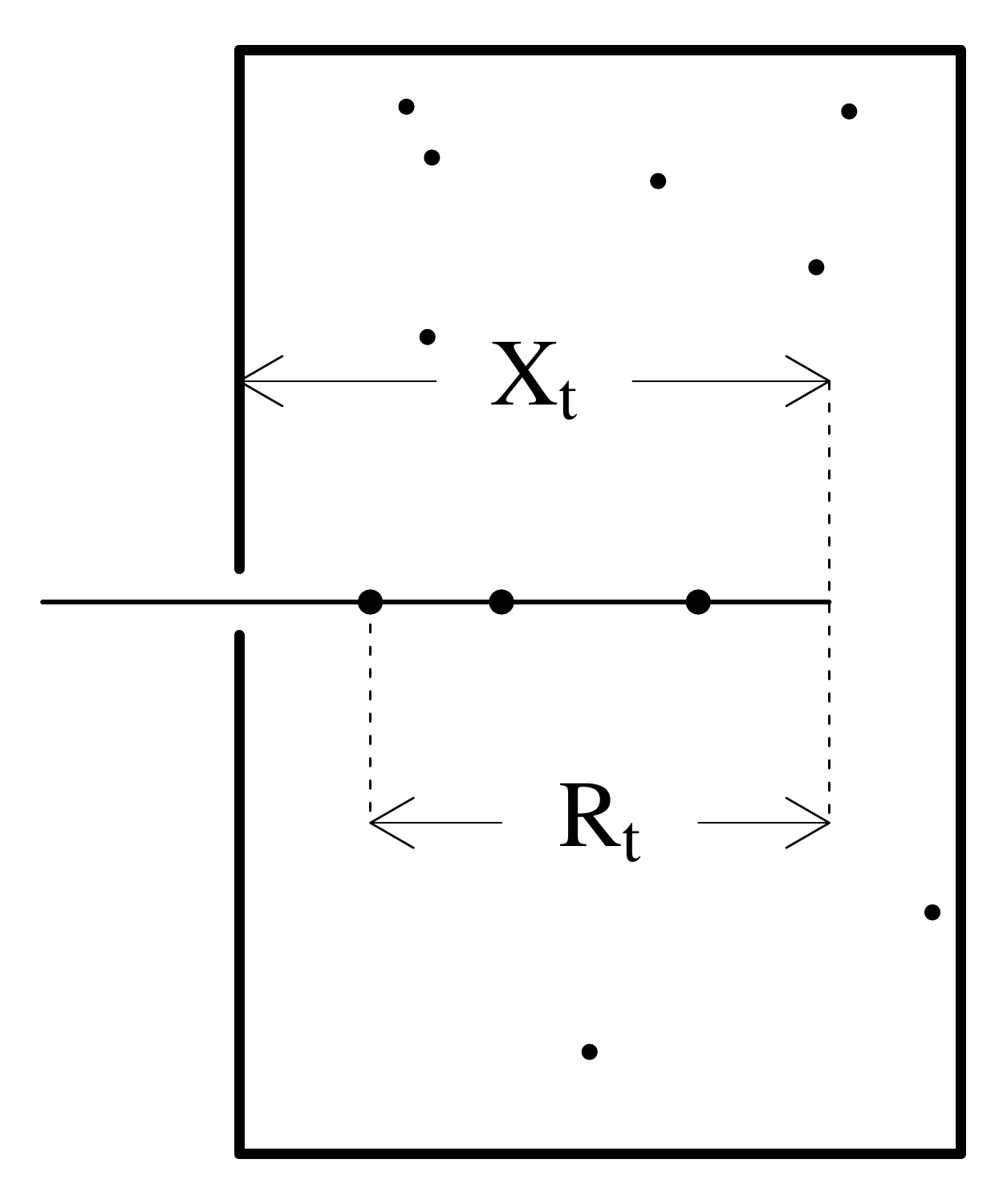}}
  \end{center}
  \caption{\label{fig:illu} (A) Illustration of the translocation
  ratchet. A protein moves through a nanopore by thermal
  fluctuations. On the \emph{in}-side of the pore, molecules may bind
  to the protein which prevent the protein from moving \emph{out}
  again. (B) The state of the protein at time $t$. The total length of
  the protein on the \emph{in}-side is $X_t$ whereas the last bound
  molecule is located at $R_t$.}
\end{figure}

~

The paper is organised as follows: in Section \ref{sec:model-results}
we introduce our model and give the main results, the Law of Large
Numbers (Theorem \ref{T:LLN}) and the Central Limit Theorem (Theorem
\ref{T:CLT}). In Section \ref{sec:appl} we interpret our results with
respect to existing literature on ratchet models. In Sections
\ref{sec:graph-repr}, \ref{sec:invar-distr} and \ref{sec:regen-struct}
we provide the three main techniques used for the proof of Theorems
\ref{T:LLN} and \ref{T:CLT}; see also Remark \ref{rem:thm}. We
conclude with a formal proof of both theorems in Section
\ref{sec:proof}.

\section{Model and Results}
\label{sec:model-results}
We study a \emph{translocation ratchet} similar to the model by
\citet{LiebermeisterRapoportHeinrich:2001} and \cite{Elston2002} using
the following assumptions: (i) the protein moves \emph{in} and
\emph{out} with equal probabilities; (ii) the protein movement is
reflected at bound ratcheting molecules; (iii) the protein has a
continuum of sites to which ratcheting molecules can bind; (iv) the
dissociation rate of ratcheting molecules from the protein is much
smaller than their binding rate to the protein; (v) the ratcheting
molecules are infinitely small; (vi) the protein is infinitely long.

In mathematical terms, we consider a time-homogeneous Markov dynamics
$(\mathcal X, \mathcal R) = (X_t, R_t)_{t\geq 0}$, starting in $(X_0,
R_0) = (x,0), x\geq 0$. Here, at time $t$, $X_t$ is the length of the
protein on the \emph{in}-side and $R_t$ is the distance of the
molecule closest to the boundary as measured from the end point of the
protein that is inside the cell. The process $\mathcal X =
(X_t)_{t\geq 0}$ is reflected Brownian motion, where the reflection
point at time $t$ is $R_t$. This reflection point $R_t$ increases with
$t$: given that the value of $(\mathcal X, \mathcal R)$ at time $t$ is
$(X_t , R_t)$, $R_t$ jumps to a uniformly chosen value $r \in [R_t ,
X_t ]$ at rate $\gamma(X_t-R_t) dr$ for some $\gamma>0$.
In other words, at rate $\gamma(X_t-R_t)$ a new ratcheting molecule
binds uniformly on $[R_t;X_t]$ which provides a new reflection point
for $\mathcal X$. By this dynamics, $R_t\leq X_t$ for all $t\geq 0$,
almost surely. We refer to $(\mathcal X, \mathcal R)$ as \emph{the
  $\gamma$-Brownian ratchet started in $x$}. If we want to stress the
dependence on $\gamma$, we write $(\mathcal X^\gamma, \mathcal
R^\gamma) = (X_t^\gamma, R_t^\gamma)_{t\geq 0}$ for the
$\gamma$-Brownian ratchet.  For a realization of the processes
$(\mathcal X, \mathcal R)$ see Figure \ref{fig:br}.

\begin{figure}
\begin{center}
  \includegraphics[width=12cm]{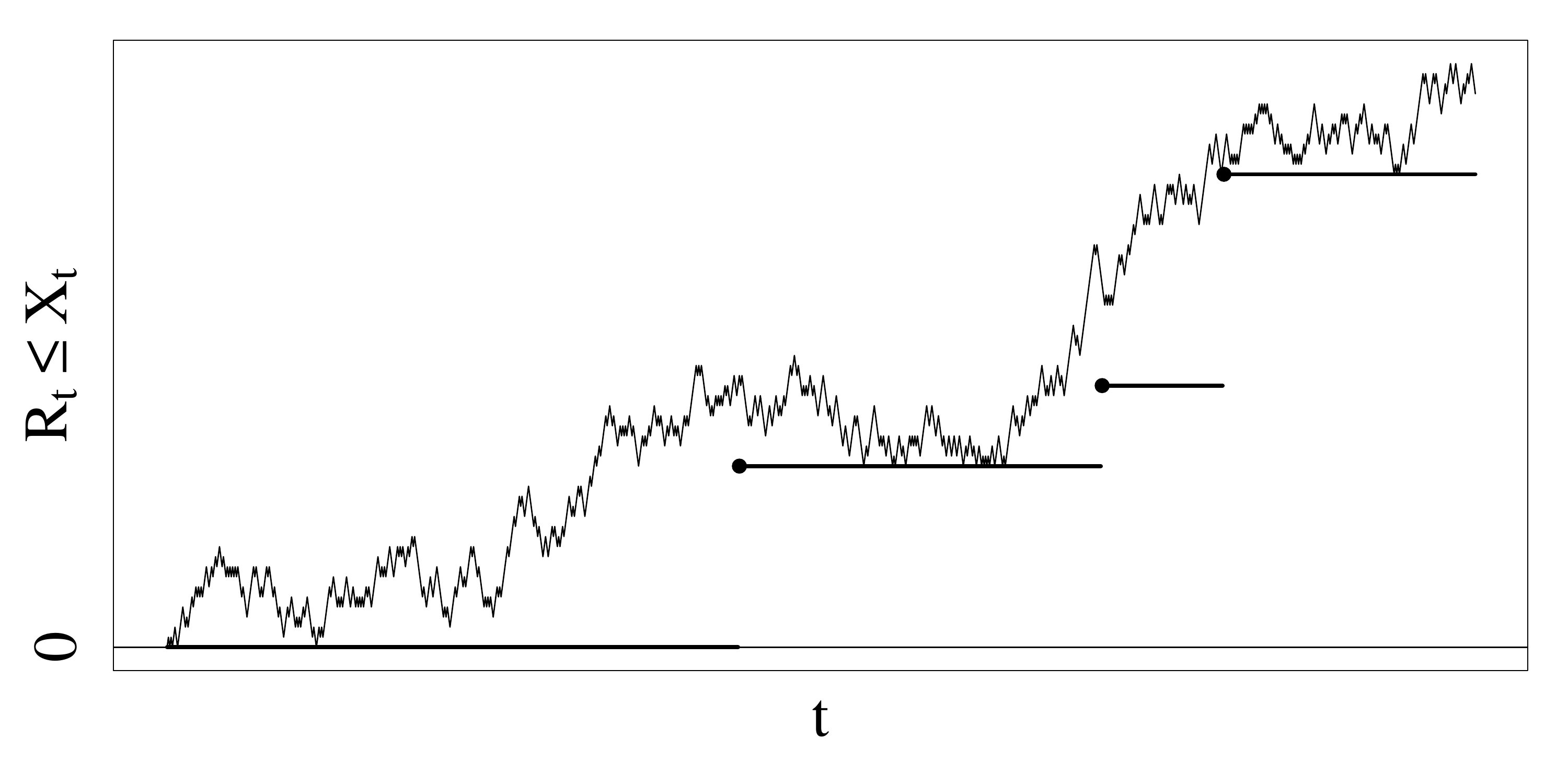}
\end{center}
\caption{\label{fig:br} One realization of the processes $(\mathcal X,
  \mathcal R)$, $\mathcal X = (X_t)_{t\geq 0}, \mathcal R=(R_t)_{t\geq
    0}$. The Brownian motion $\mathcal X$ is reflected at the boundary
  $\mathcal R$ which in turn jumps at time $t$ at rate proportional to
  $X_t - R_t$. }
\end{figure}

The rate of jumps of $\mathcal R$ interacts closely with the distance
of $\mathcal X$ and $\mathcal R$. Since $\mathcal R$ is non-decreasing
and $\mathcal R\leq \mathcal X$, the process $\mathcal X$ also tends
to grow. We immediately formulate our main results concerning the Law
of Large Numbers and the Central Limit Theorem for the speed of
$\mathcal X$:

\begin{theorem}[Law of Large Numbers]\label{T:LLN}
  Let $\mathcal X = (X_t)_{t\geq 0}$ be the $\gamma$-Brownian ratchet
  started in $x\geq 0$. Then,
  $$ \frac{X_t}{t} \xrightarrow{t\to\infty} C_\gamma $$ almost surely, where 
  \begin{align}\label{eq:cgamma}
    C_\gamma = \frac{\Gamma(2/3)}{\Gamma(1/3)}
    \Big(\frac{3\gamma}{4}\Big)^{1/3}
  \end{align}
  and $\Gamma(.)$ is the gamma function.
\end{theorem}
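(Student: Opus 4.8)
The plan is to introduce the difference process $D_t := X_t - R_t \geq 0$ and show that it is a positive recurrent Markov process with an explicit invariant distribution, then transfer the ergodic behaviour of $D$ to the linear growth of $X$. First I would write down the generator of $(X_t, R_t)$: between jumps of $\mathcal R$, the pair moves as $(\text{reflected BM at } R, \text{constant})$, so $D_t$ evolves as a reflected Brownian motion on $[0,\infty)$ (reflected at $0$); and at rate $\gamma D_t$ the boundary $R$ jumps to a uniform point in $[R_t, X_t]$, which in terms of $D$ means $D_t$ jumps from its current value $d$ to a uniform point in $[0,d]$. Thus $\mathcal D = (D_t)_{t\ge 0}$ is autonomously Markov: a reflected Brownian motion subject to multiplicative-rate downward "cuts" to a uniform fraction of its height. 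I would then solve for its stationary density $\pi$ via the stationarity equation $\mathcal L_D^\ast \pi = 0$, i.e.
\begin{align}\label{eq:stat-eq}
  \tfrac12 \pi''(d) + \gamma\!\int_d^\infty \frac{\pi(y)}{y}\,dy - \gamma\, d\, \pi(d) = 0,
\end{align}
with reflecting (Neumann-type) condition at $0$. Setting $u(d) = \int_d^\infty \pi(y)/y\,dy$, so $u'(d) = -\pi(d)/d$, one reduces \eqref{eq:stat-eq} to a third-order linear ODE with polynomial coefficients whose solution is of Airy type; matching integrability at $+\infty$ and the boundary condition at $0$ pins down $\pi$ up to normalization, and the normalization constant produces the ratio of Gamma functions. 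The appearance of $\gamma^{1/3}$ is forced by the scaling $d \mapsto \gamma^{1/3} d$ which is the unique rescaling making \eqref{eq:stat-eq} parameter-free; this immediately explains the exponent in \eqref{eq:cgamma}.

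Next I would establish the LLN itself. Since $R_t$ is nondecreasing and the total drift of $X$ comes entirely from the jumps of $\mathcal R$ (reflected Brownian motion contributes a local-time term at $R$, which I would control), I would write $X_t = x + B_t + L_t + J_t$ where $B$ is a standard Brownian motion, $L_t$ is the local time of $\mathcal X$ at $\mathcal R$, and $J_t = R_t - R_0$ is the accumulated jump displacement of the boundary. By construction $J_t$ is a cumulative (additive functional) process driven by the jumps of $\mathcal R$: at each jump at a time when $D = d$, the increment of $R$ has conditional mean $d/2$, and jumps occur at rate $\gamma d$. Hence the compensator of $J_t$ is $\int_0^t \gamma D_s \cdot \tfrac{D_s}{2}\,ds = \tfrac{\gamma}{2}\int_0^t D_s^2\,ds$, and similarly one checks $L_t$ has a comparable (lower-order, in fact proportional) contribution — more precisely, an Itô/Tanaka bookkeeping on $D_t \ge 0$ shows $L_t - \tfrac{\gamma}{2}\int_0^t D_s^2\,ds$ together with $D_t$ stays bounded in the mean, so that $X_t - \tfrac{\gamma}{2}\int_0^t D_s^2\,ds - B_t - D_t$ is controlled. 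Applying the ergodic theorem for the positive recurrent process $\mathcal D$ (with invariant measure $\pi$, whose existence and uniqueness and the needed Harris-type recurrence I would justify using the explicit $\pi$ and a Lyapunov function such as $V(d) = d^2$, whose generator is $\le 1 - \gamma d^3 < 0$ for large $d$) gives $\tfrac1t\int_0^t D_s^2\,ds \to \int_0^\infty d^2\,\pi(d)\,dd$ a.s., and $B_t/t \to 0$, $D_t/t \to 0$. Therefore $X_t/t \to C_\gamma := \tfrac{\gamma}{2}\int_0^\infty d^2 \pi(d)\,dd$ (plus the local-time contribution, which the bookkeeping identifies), and one computes this moment of $\pi$ to be $\tfrac{\Gamma(2/3)}{\Gamma(1/3)}(3\gamma/4)^{1/3}$.

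I expect the main obstacle to be twofold. The analytic part — solving \eqref{eq:stat-eq} and correctly identifying the admissible Airy-type solution with the right decay and boundary behaviour, then evaluating the normalization and the second moment in closed form — is where the Gamma-function constant must be extracted carefully; this is presumably the content of Section \ref{sec:invar-distr}. The probabilistic part — proving genuine positive Harris recurrence and the a.s.\ ergodic theorem for $\mathcal D$ (rather than just existence of an invariant law), and rigorously accounting for \emph{all} sources of drift in $X_t$, in particular disentangling the local-time term $L_t$ at the moving boundary from the jump term $J_t$ so that no contribution is double-counted or dropped — is the step most prone to hidden errors, and is where a graphical/coupling construction of the process (Section \ref{sec:graph-repr}) and a regeneration structure (Section \ref{sec:regen-struct}) would do the heavy lifting, the latter also setting up the CLT of Theorem \ref{T:CLT}.
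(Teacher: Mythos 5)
Your route — passing through the autonomous process $D_t = X_t - R_t$, its continuous-time stationary density $\pi_D$, and the identity $C_\gamma = \tfrac{\gamma}{2}\E_{\pi_D}[D^2]$ — is a genuine alternative to the paper's argument and does give the right answer: the paper instead works with the Markov chain $(Y_n, W_n,\eta_n)$ embedded at the jump times $\tau_n$ of $\mathcal R$, finds that $Y_n = X_{\tau_n}-R_{\tau_n}$ has invariant density $3\,Ai$ (for $\gamma=\tfrac12$), and uses a renewal/cumulative-process LLN to identify the speed as $\E_\pi[Y_1]/\E_\pi[\tau_1]$. Your final formula is consistent with theirs, and your scaling remark correctly explains the $\gamma^{1/3}$. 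However, there are several substantive issues in the way you set things up.

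First, the decomposition $X_t = x + B_t + L_t + J_t$ double-counts the drift. The reflected process obeys $X_t = x + B_t + L_t$ (Skorokhod/Tanaka), full stop; the jumps of $\mathcal R$ enter $X$ only indirectly through the local time. Equivalently, and much more cleanly, $X_t = D_t + R_t$ trivially, $D_t/t \to 0$, and $R_t/t \to \tfrac{\gamma}{2}\E_{\pi_D}[D^2]$ since $R_t - \tfrac{\gamma}{2}\int_0^t D_s^2\,ds$ is a martingale. If one actually keeps the extra $J_t$ in the Semimartingale decomposition and then applies your ``bookkeeping'', the drift comes out a factor of two too large; the conclusion you state is consistent only with the corrected decomposition.

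Second, the stationarity equation \eqref{eq:stat-eq} is wrong: the jump kernel sends $y \mapsto \text{Unif}[0,y]$ at rate $\gamma y$, so the inflow density at $d$ is $\int_d^\infty \gamma y\cdot\tfrac1y\,\pi(y)\,dy = \gamma\int_d^\infty\pi(y)\,dy$, not $\gamma\int_d^\infty\pi(y)/y\,dy$. More importantly, differentiating the corrected equation gives $\pi''' - 2\gamma d\,\pi' - 4\gamma\pi = 0$, which does \emph{not} reduce by rescaling to the third-order equation $w''' - 4xw' - 2w = 0$ satisfied by products of Airy solutions; so the claim that $\pi_D$ is ``of Airy type'' with the Gamma constant dropping out of a normalization is over-optimistic. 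In fact $\pi_D$ is a Green-function convolution $\pi_D(d)\propto\int f_Y(y)G(y,d)\,dy$ involving both $Ai$ and $Bi$, and extracting $\E_{\pi_D}[D^2]$ requires exactly the Airy integral identities the paper develops — plus knowledge of $f_Y$. This is precisely why the paper works with the embedded chain at jump times: the stationarity condition for $Y_n$ reduces to the \emph{second}-order equation $f_Y'' = z f_Y + c$ with $c=0$ (equation \eqref{eq:14}), whence $f_Y = 3Ai$ directly, and then $\E_\pi[Y_1] = -3Ai'(0)$ and $\E_\pi[\tau_1] = 6Ai(0)$ are short computations. Your continuous-time version has to reconstruct all of this anyway, just through a more indirect expression.

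Finally, for the a.s.\ convergence you invoke Harris recurrence and the ergodic theorem for $D_t$ in continuous time; the paper instead sets up a genuine regeneration structure at times $\rho_n$ where $X_t = R_t$ and applies the LLN for cumulative processes, which simultaneously gives tight control of second moments (needed for the CLT) and avoids having to prove positive Harris recurrence of the continuous-time process from scratch. Your Lyapunov-function sketch is plausible but, as you say yourself, is the step most prone to hidden errors; the regenerative route is the safer one, and the paper still needs the uniqueness of the invariant law for the jump chain (Proposition~\ref{P:coupl}, via a coupling argument on the graphical construction) and Harris recurrence of that chain in the final ratio-limit step.
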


\begin{theorem}[Central Limit Theorem]\label{T:CLT}
  Let $\gamma>0$ and $\mathcal X = (X_t)_{t\geq 0}$ be the
  $\gamma$-Brownian ratchet started in $x\geq 0$, $C_\gamma$ as in
  \eqref{eq:cgamma} and $X$ some $N(0,1)$-distributed random
  variable. Then, there is $\sigma>0$ which is independent of $\gamma$
  such that
  $$ \frac{X_t - C_\gamma t}{\sigma \sqrt{t}} \xRightarrow{t\to\infty} X,$$
    where '$\xRightarrow{}$' denotes convergence in distribution.
\end{theorem}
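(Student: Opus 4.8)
The plan is to establish both theorems simultaneously via a regeneration structure for the process $(\mathcal X, \mathcal R)$. The key observation is that the ``gap process'' $Z_t := X_t - R_t$ is, by the scaling of Brownian motion together with the linear dependence of the jump rate on $Z_t$, a positive recurrent Markov process on its own — indeed the dynamics of $Z$ (reflected Brownian motion pushed down by jumps at rate $\gamma Z_t$ to a uniform point in $[0,Z_t]$) does not depend on the absolute position of $\mathcal X$. The first step, which I expect is carried out in Section~\ref{sec:invar-distr}, is to identify the invariant distribution of $Z$ explicitly; the appearance of $\Gamma(1/3), \Gamma(2/3)$ and the exponent $1/3$ in $C_\gamma$ strongly suggests that the stationary density solves an ODE whose solution involves $\exp(-c z^3)$, and that rescaling $z \mapsto \gamma^{1/3} z$ removes the $\gamma$-dependence from the shape of the law. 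I would write down the generator of $Z$, derive the stationarity equation for its density $\pi_\gamma$, solve it, and read off $C_\gamma = \E_{\pi_\gamma}[\text{drift of } \mathcal X]$ — but $\mathcal X$ has zero Brownian drift, so the speed must come entirely from the local-time/reflection push, i.e.\ $C_\gamma$ equals the stationary mean rate at which $R_t$ increases, namely $\gamma\,\E_{\pi_\gamma}[Z \cdot (\text{mean jump size})] = \tfrac{\gamma}{2}\E_{\pi_\gamma}[Z^2]$.

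The second step is to set up regeneration times. Because $Z$ is positive recurrent, there are (random) times $0 = \tau_0 < \tau_1 < \tau_2 < \cdots$ at which the process $Z$ ``starts afresh'' — e.g.\ successive visits to a fixed reference state, or more robustly, times constructed via a Nummelin-type splitting / an explicit regeneration event (such as: the gap hits $0$, i.e.\ $\mathcal X$ touches $\mathcal R$, immediately after a jump of $\mathcal R$). At such times, the increments $(X_{\tau_{k+1}} - X_{\tau_k}, \tau_{k+1}-\tau_k)_{k\geq 1}$ are i.i.d., and one must check they have finite second moments; this is plausible because the strong downward push at rate $\gamma Z$ gives exponential tails on excursion lengths and sizes. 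Then $X_t$ is, up to a negligible boundary term, a cumulative process / renewal-reward process: $X_{\tau_n} = X_{\tau_1} + \sum_{k=1}^{n-1}(X_{\tau_{k+1}}-X_{\tau_k})$ with $\tau_n = \tau_1 + \sum_{k=1}^{n-1}(\tau_{k+1}-\tau_k)$. Applying the strong law of large numbers and Anscombe's theorem (or the standard renewal-reward CLT, e.g.\ via the functional CLT for cumulative processes) yields
\[
 \frac{X_t}{t} \to \frac{\E[X_{\tau_2}-X_{\tau_1}]}{\E[\tau_2-\tau_1]} =: C_\gamma, \qquad
 \frac{X_t - C_\gamma t}{\sqrt{t}} \Rightarrow N(0,\sigma^2)
\]
with $\sigma^2 = \Var(X_{\tau_2}-X_{\tau_1} - C_\gamma(\tau_2-\tau_1))/\E[\tau_2-\tau_1]$. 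The identification of the limiting constant with \eqref{eq:cgamma} then follows by matching this ratio with the stationary computation $\tfrac{\gamma}{2}\E_{\pi_\gamma}[Z^2]$ from step one, using ergodicity to replace the cycle-average by the stationary average.

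The remaining point — and the one I expect to require real work — is the claim that $\sigma$ is \emph{independent of $\gamma$}. Morally this is the same scaling miracle that produced the $\gamma^{1/3}$ law: under the space-time rescaling $(x,t) \mapsto (\gamma^{1/3} x, \gamma^{2/3} t)$ the $\gamma$-ratchet is mapped to the $1$-ratchet (Brownian motion is invariant under $x\mapsto \lambda x$, $t \mapsto \lambda^2 t$, and this sends jump rate $\gamma z \mapsto z$), so $X_t^\gamma \stackrel{d}{=} \gamma^{-1/3} X_{\gamma^{2/3} t}^{1}$. Plugging this into the CLT for the $1$-ratchet gives fluctuations of order $\gamma^{-1/3}\sqrt{\gamma^{2/3} t} = \sqrt t$ with a variance constant that no longer carries any $\gamma$. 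Thus the honest plan is: (a) prove the scaling identity $(\mathcal X^\gamma, \mathcal R^\gamma) \stackrel{d}{=} (\gamma^{-1/3} \mathcal X^1_{\gamma^{2/3}\cdot},\, \gamma^{-1/3}\mathcal R^1_{\gamma^{2/3}\cdot})$ rigorously from the graphical construction of Section~\ref{sec:graph-repr}; (b) prove Theorems~\ref{T:LLN} and \ref{T:CLT} for $\gamma=1$ via the regeneration argument above, obtaining some $C_1$ and $\sigma^2$; (c) deduce the general case, with $C_\gamma = \gamma^{1/3} C_1$ and the same $\sigma$, and separately verify $C_1 = \Gamma(2/3)/\Gamma(1/3)\cdot(3/4)^{1/3}$ from the explicit invariant density. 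The main obstacle is the moment/tail control needed to legitimize the renewal-reward CLT — showing the regeneration cycles are genuinely i.i.d.\ with finite variance and that the overshoot $X_t - X_{\tau_{N(t)}}$ is negligible — together with carefully justifying the graphical coupling that underlies both the regeneration construction and the scaling identity.
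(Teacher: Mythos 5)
Your proposal follows the paper's route for the CLT essentially verbatim: regeneration of $(\mathcal X, \mathcal R)$ at excursions of the gap $Z_t = X_t - R_t$ from zero (the paper's regeneration times $\rho_n$, Definition~\ref{def:cumu1}, are exactly your ``gap hits $0$ after a jump of $\mathcal R$''), representation of $X_t$ as a type-A cumulative process with a negligible remainder, second-moment bounds on cycle lengths and increments plus negligibility of the overshoot (Propositions~\ref{P:mombounds1}, \ref{P:mombounds2}, \ref{prop:AsympA}), the cumulative-process CLT of Smith/Roginsky, and finally the space-time rescaling $(t,x)\mapsto(\gamma^{-2/3}t,\gamma^{-1/3}x)$, established from the graphical construction in Proposition~\ref{P:scale}, to strip the $\gamma$-dependence from $\sigma$. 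So the CLT strategy is the same one the paper uses. The one genuine divergence sits in the auxiliary computation of $C_\gamma$ (really LLN material): you propose to find the continuous-time stationary law of $Z$ and integrate the instantaneous rate of increase of $R$, namely $\tfrac{\gamma}{2}\E_{\pi_\gamma}[Z^2]$, whereas the paper instead studies the embedded jump chain $(Y_n, W_n, \eta_n)$ at jump times of $\mathcal R$, identifies the invariant $Y$-marginal as $3\,Ai(z)$, and obtains $C_\gamma$ as the ratio $\E_\pi[W_1]/\E_\pi[\eta_1]$ via a ratio limit theorem. Both routes are legitimate, but note your guessed tail $e^{-cz^3}$ for the stationary density is off: the relevant density is the Airy function, with tail $\sim e^{-\frac{2}{3}z^{3/2}}$, and the continuous-time stationary law of $Z$ is a size-biased (by expected holding time) reweighting of the jump-chain law, so the two constants must be reconciled through that Palm-type correction before the identification with \eqref{eq:cgamma} goes through.
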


\begin{remark}[Numerical values for $C_\gamma$ and $\sigma$]
  The numerical value for the speed of the $\gamma$-Brownian ratchet
  is $C_\gamma = \frac{\Gamma(2/3)}{\Gamma(1/3)}
  \Big(\frac{3\gamma}{4}\Big)^{1/3} \approx 0.459248 \gamma^{1/3}$.

  For the numerical value of $\sigma$, note that $X_t$ is a reflected
  Brownian motion for $\gamma=0$. Since $\sigma$ does not depend on
  $\gamma$ (as long as $\gamma>0$), it is tempting to conjecture that
  $\sigma=\sqrt{1-2/\pi}\approx 0.60281$, the asymptotic standard
  deviation for reflected Brownian motion. Clearly, this heuristics
  would require an interchange of limits, $t\to\infty$ and $\gamma\to
  0$. While we could not make this interchange rigorous, simulations
  support this conjecture for the value of $\sigma$.
\end{remark}

\begin{remark}[Main steps in the proofs]\label{rem:thm}
  The proof of Theorems \ref{T:LLN} and \ref{T:CLT} relies on several
  ingredients which we will provide in Sections \ref{sec:graph-repr},
  \ref{sec:invar-distr} and \ref{sec:regen-struct}, respectively.

  \sloppy First (see Section \ref{sec:graph-repr}), the process
  $(\mathcal X, \mathcal R)$ can be constructed graphically by a
  one-dimensional (non-reflecting) Brownian motion and an independent
  Poisson process on $[0;\infty)\times \mathbb R$ with intensity
  $\gamma$. From this graphical construction, it becomes clear that
  the speed of $X_t$, i.e., the limit of $\frac{X_t}{t}$, must be
  proportional to $\gamma^{1/3}$, if it exists. In addition, the
  graphical construction shows that $\Var[X_t]$ does not depend on
  $\gamma$; see Remark \ref{rem:graph1}.

  Second, to compute a candidate for the asymptotic speed $X_t/t$, we
  study the Markov chain $(X_{\tau_{n}} - R_{\tau_n},
  R_{\tau_n}-R_{\tau_{n-1}}, \tau_n-\tau_{n-1})_{n=1,2,...}$, where
  $\tau_0=0$ and $\tau_1, \tau_2,...$ are the jump times of $\mathcal
  R$ (see Section~\ref{sec:invar-distr}). In particular, we show that
  this Markov chain has a unique invariant distribution and (as shown
  in Section \ref{sec:proof}) the asymptotic speed is the ratio of
  expectations of $X_{\tau_{n}} - R_{\tau_n}$ and $\tau_n-\tau_{n-1}$
  under the invariant distribution, which are computed in
  Proposition~\ref{P:incrInv}.

  Third, we find a renewal structure for the process $(\mathcal X,
  \mathcal R)$ where renewal points $\rho_1, \rho_2,...$ are given by
  times $t$ where $X_t=R_t$ and between $\rho_n$ and $\rho_{n+1}$ a
  jump of $\mathcal R$ has occurred (see
  Section~\ref{sec:regen-struct}). Using this renewal structure, we can show 
  existence for the speed of $X_t$ and use a Central Limit Theorem for
  cumulative processes in order to prove Theorem~\ref{T:CLT}.
\end{remark}

\section{Applications in biology and extensions}
\label{sec:appl}
We describe possible extensions of the $\gamma$-Brownian ratchet some
of which already appeared in the literature. Moreover, we give
biological interpretations of our findings.

\begin{remark}[Review of published ratchet models and
  extensions]\label{rem:31}
  Modelling protein translocation by ratcheting mechanisms has started
  with \cite{SimonPeskinOster1992} and \cite{PeskinOdellOster:1993}.
  We review several features of published models for protein
  translocation 
  and hint to extensions of our mathematical model. Throughout, we
  assume that $X_t$ is the length of the protein \emph{in}-side and
  $R_t$ is the position of the reflection point at time $t$.

  ~

  \noindent
  \emph{Dissociation of ratcheting molecules}: In the original
  approach of \cite{SimonPeskinOster1992}, there is a finite set of
  equally spaced ratcheting sites along the protein. There are two
  rates which describe binding and dissociation of ratcheting
  molecules from the protein. In particular, the case of infinite
  rates with a constant ratio can be studied, which leads to an
  effective speed by assuming that each ratcheting site has a certain
  probability of being bound independent of all others. This is also
  the limiting case of fast binding and unbinding as studied in
  \citet{Amb2005}. These approaches lead to the following mathematical
  description:
  \begin{quote}
    (i) If $\mathcal R_t$ is the set of positions of bound ratcheting
    molecules at time $t$, let the reflection point of $X_t$ be $R_t
    := \max \mathcal R_t$. In addition, a point $r$ with $0\leq r\leq
    X_t$ is added at rate $\rho dr$ and an existing point
    $r\in\mathcal R_t$ is taken from $\mathcal R_t$ at a rate
    $\sigma$. Here, $\sigma$ describes the rate of dissociation of
    ratcheting molecules and the $\gamma$-Brownian ratchet is the
    special case $\sigma=0$.
  \end{quote}

  \noindent
  \emph{Protein movement against a force}: Usually, proteins are
  present in folded states inside a cell. Travelling through a
  nanopore requires unfolding of these proteins. In particular, the
  part of the protein on the \emph{out}-side might still be in a
  folded state while the part on the \emph{in}-side is
  unfolded. Hence, moving \emph{in} takes place against a force, as
  modelled by \citet{LiebermeisterRapoportHeinrich:2001}; see also
  \cite{BudhirajaFricks2006}. The mathematical description of such
  ideas extends the $\gamma$-Brownian ratchet as follows:
  \begin{quote}
    (ii) Between reflection events, $(X_t)_{t\geq 0}$ is a stochastic
    process with continuous paths, not necessarily a Brownian
    motion. In particular, if proteins have to unfold \emph{out}-side
    the nanopore, $(X_t)_{t\geq 0}$ is best chosen as a reflected
    Brownian motion with negative drift.
  \end{quote}

  \noindent
  \emph{State-dependent binding rate}: The binding of ratcheting
  molecules to the protein might depend on various parameters. In the
  model of \cite{LiebermeisterRapoportHeinrich:2001} ratcheting sites
  along the protein are occupied only if they are close to the
  nanopore. The physical reason for this preferred binding close to
  the nanopore is an interaction with the protein forming the
  nanopore. Such a phenomenon has been described for the translocation
  of Prepro-$\alpha$ Factor (the protein) with BiP (the ratcheting
  molecule) and Sec61p as the protein forming the nanopore
  \citep{Matlack1999}. A compromise between ratcheting molecules which
  bind with uniform rates along the protein and only in proximity to
  the nanopore was considered by \citet{Elston2002}.  Finally, binding
  of ratcheting molecules might depend on the amino acid sequence of
  the protein as described in \citet{pmid19102558}. We suggest the
  following extension of the $\gamma$-Brownian ratchet:
  \begin{quote}
    (iii) Given $X_t$ and $R_t$, let the reflection point change to
    $r$, $R_t\leq r \leq X_t$ at rate $\rho(d(X_t-r))$ for some
    $\sigma$-finite measure $\rho$ on $\mathbb R_+$. The
    $\gamma$-Brownian ratchet is the special case that $\rho$ is
    Lebesgue measure on $\mathbb R_+$.
  \end{quote}
\end{remark}

Some special cases of the extensions (i), (ii), (iii) are straight
forward to analyse. Consider the extension (i) as an example. As will
become clear in Section \ref{sec:graph-repr} the average time between
jumps of $\mathcal R$ in the $\gamma$-Brownian ratchet scales with
$\gamma^{-2/3}$. This implies that if $\sigma\ll\gamma^{2/3}$ the
extension (i) is approximately equal to the $\gamma$-Brownian ratchet
but in other cases, the speed of the ratchet in (i) remains to be
solved. Another example is the extension (iii) in the case $\rho =
\gamma\cdot \delta_0$ for some $\gamma>0$, i.e., binding of ratcheting
molecules to the protein occur at constant rate directly at the
nanopore. For the case of BiP as ratcheting molecule and
Prepro-$\alpha$ as protein, parameters estimated in \citet{Elston2002}
indicate that this is a realistic scenario. In this case, jump times
of $\mathcal R = (R_t)_{t\geq 0}$ are a rate-$\gamma$ Poisson
process. At each jump time, we set $R_t = X_t$ which is also the new
reflection point of the Brownian motion. Therefore, jump times are
renewal points of such a Brownian ratchet. Moreover, starting a
reflected Brownian motion $(B_t)_{t\geq 0}$ at 0 and if $T$ is an
exponential time with rate $\gamma$, it can be computed
\citep[p.~333]{BorodinSalminen:2002} that $B_T$ is exponentially
distributed with rate $(2\gamma)^{1/2}$. This leads to an asymptotic
velocity of $(2\gamma)^{-1/2}\cdot\gamma = (\gamma/2)^{1/2}$. A
Central Limit Theorem can be proved as well.

\begin{remark}[Biological interpretation of our results]
  In our model we use the asymptotics of an infinitely long protein
  and study properties of the velocity of the Brownian ratchet. In
  practice more relevant are of course a finitely long protein and the
  time the protein is completely on the \emph{in}-side. For long
  proteins both approaches are similar: if $T_x:=\inf\{t: X_t\geq x\}$
  in our model, $T_x/x \approx 1/C_\gamma$ with $C_\gamma$ from
  \eqref{eq:cgamma} for large $x$.

  Consider the case that the rate $\gamma$ for binding of ratcheting
  molecules to the protein, is proportional to the concentration $a$
  of ratcheting molecules on the \emph{in}-side of the nanopore. As
  Theorem \ref{T:LLN} shows, the speed of translocation, $X_t/t$, is
  mainly determined by $\gamma^{1/3}$ which is proportional to
  $a^{1/3}$. That is, to double the speed of translocation requires an
  eight times higher concentration of ratcheting molecules. In
  contrast, consider a ratchet model described at the end of Remark
  \ref{rem:31} where ratcheting molecules bind to the protein
  preferably in close proximity to the nanopore. We observed that the
  speed of the ratchet is $(\gamma/2)^{1/2}$, so the speed of the
  ratchet is proportional to $a^{1/2}$. This means that the latter
  ratchet model uses the existing ratcheting molecules more efficient
  (i.e.\ protein translocation is faster) if the concentration $a$ is
  large but the $\gamma$-Brownian ratchet is more efficient when $a$
  is small. It will be interesting to see if real biological systems
  tend to behave like the more efficient model.
\end{remark}

\begin{remark}[Edwards model]
  One mathematical model similar in spirit to the Brownian ratchet is
  the Edwards model
  \citep[see][]{vdHofstad_dHollander_Koenig:1997}. Let $(B_t)_{t\ge
    0}$ be standard Brownian motion and let $P$ denote its
  distribution on path space. Edwards' model is a
  transformed measure on the path space defined for $\gamma>0$ by
  \begin{align*}
    \frac{dP_T^\gamma}{dP} = \frac1{Z_T^\gamma}\exp\Big\{-\gamma \int_\R
    L(T,x)^2\,dx, \Big\},\;  T  \ge 0, 
  \end{align*}
  where $L(T,x)$ is the local time in $x$ up to time $T$ and
  $Z_T^\gamma$ is the normalising constant. It can be seen that
  $\int_\R L(T,x)^2\,dx$ is the self intersection local time. Thus,
  the new measure discourages self intersections of paths. For the
  Brownian ratchet, such intersections are also discouraged by the
  jumping reflection boundary. 
  
  The weak Law of Large Numbers is proven in
  \citep{Westwater:1985}. In \cite{vdHofstad_dHollander_Koenig:1997}
  the Central Limit Theorem in the following form is proven: For every
  $\gamma \in (0,\infty)$
  \begin{align*}
    \lim_{T \to \infty} P_T^\gamma \biggl( \frac{\abs{B_T} -
      C_\gamma^* T}{\sigma^* \sqrt{T}} \le C\biggr) =
    \frac{1}{\sqrt{2\pi}} \int_{-\infty}^C e^{-x^2/2}, \quad \text{
      for all } \; C \in \R.
  \end{align*}
  As in the case of the Brownian ratchet, the asymptotic speed is of
  the form $C_\gamma^* = b^*\gamma^{1/3}$ for some absolute constant
  $b^*$. In addition, $\sigma^*$ does not depend on $\gamma$ as well.
  The constants $b^*$ and $\sigma^*$ can be expressed in terms of the
  largest eigenvalue of a certain Sturm-Liouville operator. In
  \cite{vanderHofstad:1998}, Theorem 3, it is shown that $b^* \in
  [1.104, 1.124]$ and $\sigma^* \in[0.60,0.66]$.  In the case of
  Brownian ratchet we have $C_\gamma=b\gamma^{1/3}$ where $b \approx
  0.459248$ and the conjectured value of $\sigma$ is $\sigma \approx
  0.60281$.  
\end{remark}

\section{Graphical construction and some applications}
\label{sec:graph-repr}
\begin{figure}
\begin{center}
  \includegraphics[width=12cm]{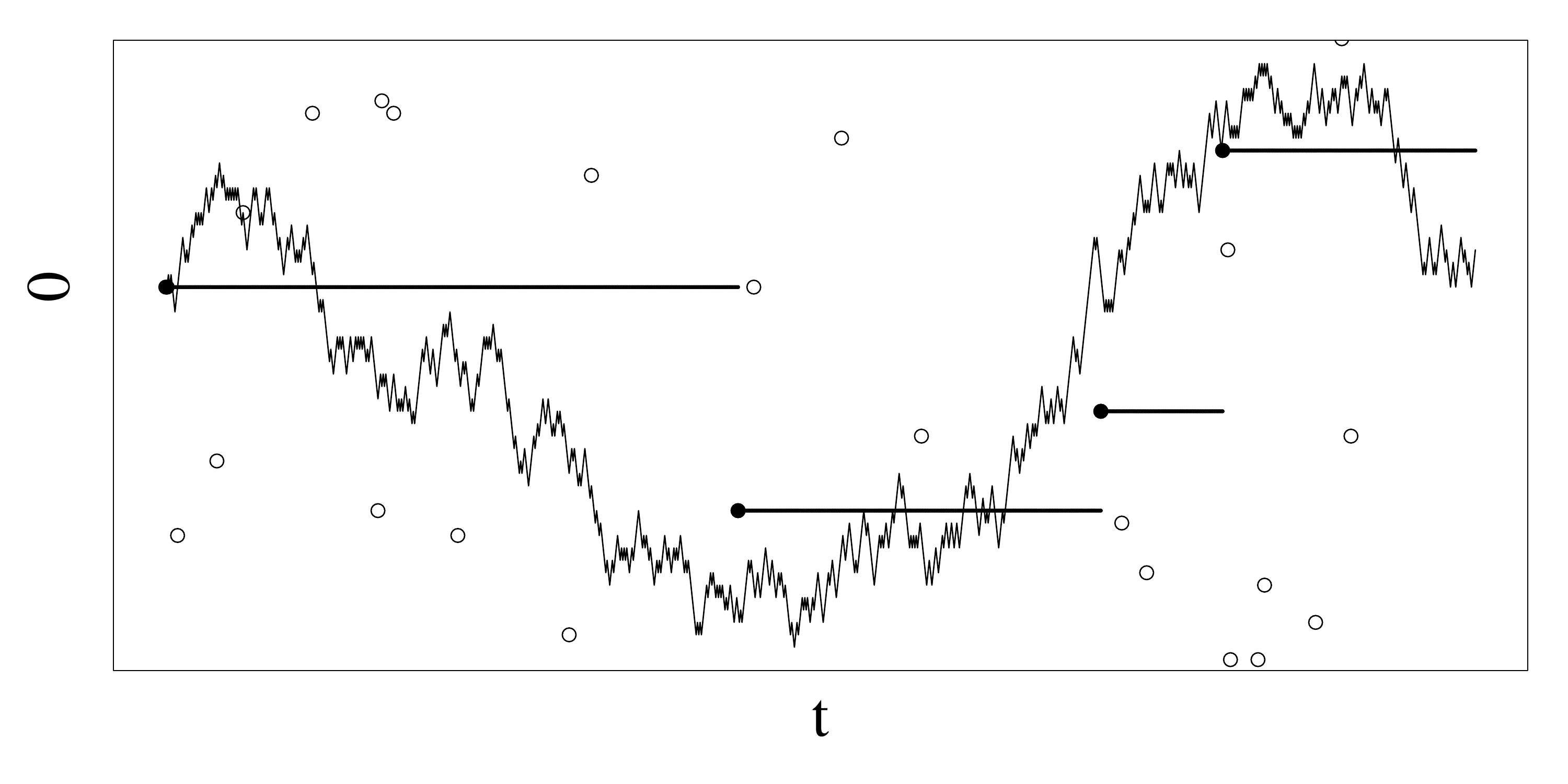}
\end{center}
\caption{\label{fig:graph} One realization of the graphical
  construction. The shown realization of the graphical construction
  leads to the same path of $(\mathcal X, \mathcal R)$ as shown in
  Figure~\ref{fig:br}.  }
\end{figure}
In this section we give a graphical construction of the Brownian
ratchet; see also Figure~\ref{fig:graph}. From that construction we
deduce a useful scaling (Proposition~\ref{P:scale}) that will be
needed in the sequel.

\subsection{The construction}
\begin{definition}\label{def:graph}
  For $\gamma>0$ let $N^\gamma$ be a Poisson point process on
  $[0;\infty)\times \R$ with intensity measure $\gamma
  \lambda^2(dt,dx)$, where $\lambda^2$ denotes the Lebesgue measure on
  $[0;\infty)\times \R$. Let $\mathcal B = (B_t)_{t\ge 0}$ be an independent 
  Brownian motion on $\R$ starting in $x\in\mathbb R$.  We define
  $\tilde \tau_0, S_0,\tilde \tau_1, S_1,\tilde \tau_2, S_2,\ldots$
  recursively by $\tilde\tau_0=0$, $S_0=0$,
  \begin{align}\label{eq:tildetau}
    \tilde\tau_{n+1} : = \inf
    \bigl\{t>\tilde\tau_{n}:\bigl(\{t\}\times [B_t \wedge
    S_{n},B_t\vee S_{n}] \bigl) \cap N^\gamma \ne \emptyset \bigr\}
  \end{align}
  and
  \begin{align} \label{eq:Sn}
    S_{n+1} & : =
    \begin{cases}
      \text{second coordinate of the unique element of } \\
      \big(\{\tilde\tau_{n+1}\} \times [B_{\tilde\tau_{n+1}} \wedge
      S_{n},B_{\tilde\tau_{n+1}}\vee S_{n}]\big) \cap N^\gamma.
    \end{cases}
  \end{align}
  That is, $\tau_{n+1}$ is the first time after $\tau_n$ when there is
  a point of the Poisson process between the graph of the Brownian
  motion and $S_n$, and $S_{n+1}$ is the second coordinate (the space
  coordinate) of that point.

  For $ t \in [\tilde\tau_n,\tilde\tau_{n+1})$ we set
  \begin{align} \label{eq:52} \widetilde R_t = \sum_{i=1}^n \abs{S_i-
      S_{i-1}}\quad \text{and} \quad \widetilde X_t = \widetilde R_t +
    \abs{B_t-S_n}
  \end{align}
  and $(\widetilde{\mathcal X}, \widetilde{\mathcal R}) := (\widetilde
  X_t, \widetilde R_t)_{t\geq 0}$.
\end{definition}

\begin{lemma}\label{l:key}
  The process $(\widetilde{\mathcal X}, \widetilde{\mathcal R})$ is a
  $\gamma$-Brownian ratchet started in $\abs x$.
\end{lemma}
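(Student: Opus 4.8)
The goal is to show that the graphically constructed process $(\widetilde{\mathcal X}, \widetilde{\mathcal R})$ has the same law as the $\gamma$-Brownian ratchet. I need to verify two things: (a) between the jump times $\tilde\tau_n$, the process $\widetilde{\mathcal X}$ is a reflected Brownian motion reflected at the current level $\widetilde R_t = \sum_{i\le n}|S_i - S_{i-1}|$, and (b) the jump times and jump targets of $\widetilde{\mathcal R}$ have the correct conditional law given the past, namely: given the current state, the next jump occurs at rate $\gamma(\widetilde X_t - \widetilde R_t)$ and the increment $|S_{n+1}-S_n|$ is uniform on $[0, \widetilde X_{\tilde\tau_{n+1}} - \widetilde R_{\tilde\tau_{n+1}}]$.

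For part (a): on $[\tilde\tau_n, \tilde\tau_{n+1})$ we have $\widetilde X_t = \widetilde R_t + |B_t - S_n|$ with $\widetilde R_t$ constant. Since $\mathcal B$ is a Brownian motion, $|B_t - S_n|$ is (by the reflection principle / Tanaka) a Brownian motion reflected at $0$, shifted to start at $|B_{\tilde\tau_n} - S_n| = |B_{\tilde\tau_n} - S_n|$; by the recursive definition, at time $\tilde\tau_n$ the point $S_n$ lies between $S_{n-1}$ and $B_{\tilde\tau_n}$, and one checks $\widetilde X_{\tilde\tau_n} - \widetilde R_{\tilde\tau_n} = |B_{\tilde\tau_n} - S_n|$ is consistent with continuity of $\widetilde{\mathcal X}$ at the jump (only $\widetilde{\mathcal R}$ jumps, while $\widetilde X$ is continuous because $S_n \in [S_{n-1}\wedge B_{\tilde\tau_n}, S_{n-1}\vee B_{\tilde\tau_n}]$ forces the two expressions for $\widetilde X_{\tilde\tau_n}$ to agree). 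So $\widetilde{\mathcal X}$ is indeed reflected BM off the moving boundary $\widetilde{\mathcal R}$; the strong Markov property of $\mathcal B$ at each $\tilde\tau_n$ (a stopping time for the joint filtration of $\mathcal B$ and $N^\gamma$) lets us restart.

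For part (b), which is the crux: condition on the state at time $\tilde\tau_n$, i.e.\ on $B_{\tilde\tau_n}$, $S_n$, and $\widetilde R_{\tilde\tau_n}$. On $[\tilde\tau_n,\infty)$, by the strong Markov property and independence of $N^\gamma$ from $\mathcal B$, the restriction of $N^\gamma$ to the region not yet explored is again a rate-$\gamma$ Poisson process. The time $\tilde\tau_{n+1}$ is the first time the vertical segment $\{t\}\times[B_t\wedge S_n, B_t\vee S_n]$ meets $N^\gamma$; since this segment has length $|B_t - S_n| = \widetilde X_t - \widetilde R_t$ (on this interval), a standard exponential-clock argument for Poisson processes swept by a moving window gives that, conditionally on the path of $\mathcal B$, the hazard rate of $\tilde\tau_{n+1}$ at time $t$ is $\gamma|B_t - S_n| = \gamma(\widetilde X_t - \widetilde R_t)$ — exactly the prescribed jump rate. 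Moreover, conditionally on $\tilde\tau_{n+1}=t$, the Poisson point realizing the hit is uniform on the segment $[B_t\wedge S_n, B_t\vee S_n]$, so its distance from $S_n$ — which is precisely $|S_{n+1}-S_n|$ — is uniform on $[0,|B_t - S_n|] = [0, \widetilde X_t - \widetilde R_t]$, and the new reflection point $\widetilde R_{\tilde\tau_{n+1}} = \widetilde R_t + |S_{n+1}-S_n|$ is therefore uniform on $[\widetilde R_t, \widetilde X_t]$, matching the definition of the ratchet. The starting point is $\widetilde X_0 = |B_0 - S_0| = |x - 0| = |x|$ and $\widetilde R_0 = 0$, so it is started in $|x|$ as claimed.

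The main obstacle is making the "moving window sweeping a Poisson process" argument rigorous — i.e.\ justifying that the conditional hazard rate of $\tilde\tau_{n+1}$ given $\mathcal B$ is $\gamma|B_t - S_n|$ and that the jump location is conditionally uniform. This is intuitively clear but requires care: one should condition on the Brownian path, use Fubini together with the independence and the complete-randomness of the Poisson process on the progressively revealed region, and check there is no double-counting of the already-swept area (the region swept up to time $\tilde\tau_n$ is $N^\gamma$-free by construction, so the available intensity going forward is genuinely rate $\gamma$ on fresh territory). Once this single step is in place, the rest is bookkeeping: induct over $n$, invoke the strong Markov property of $\mathcal B$ at the stopping times $\tilde\tau_n$, and conclude that $(\widetilde{\mathcal X},\widetilde{\mathcal R})$ solves the same martingale problem / has the same transition mechanism as the $\gamma$-Brownian ratchet, hence equals it in law.
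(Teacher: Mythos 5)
Your proposal is correct and follows essentially the same route as the paper's proof: verify that between the times $\tilde\tau_n$ the process $\widetilde{\mathcal X}$ is Brownian motion reflected at the constant level $\widetilde R_t$, that the hazard rate of the next Poisson hit in the moving window $[B_t\wedge S_n, B_t\vee S_n]$ is $\gamma|B_t-S_n|=\gamma(\widetilde X_t-\widetilde R_t)$, and that by homogeneity of $N^\gamma$ the hit location is uniform on that segment, so the new reflection point is uniform on $[\widetilde R_t,\widetilde X_t]$. The paper's argument is a terser version of exactly this verification; your additional checks (continuity of $\widetilde{\mathcal X}$ across jump times and the starting value $|x|$) are correct and harmless extras.
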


\begin{proof}
  First note that $\widetilde{\mathcal R}$ is almost surely
  non-decreasing and $\widetilde X_t \geq \widetilde R_t$ for all
  $t\geq 0$ by construction. Consider any time
  $t\in[\tilde\tau_n;\tilde\tau_{n+1})$. The rate of occurrence of
  $\tilde\tau_{n+1}$ is $\gamma |B_t-S_n| = \gamma (\widetilde X_t -
  \widetilde R_t)$. Moreover, $\widetilde R_t$ jumps to $r$ uniformly
  chosen in $[\widetilde R_t; \widetilde X_t]$ if and only if
  $|S_{n+1}-S_n| = |r-\widetilde R_t|$. By homogeneity of the Poisson
  process $N^\gamma$, $r$ is uniform on $[\widetilde R_t, \widetilde
  X_t]$. In addition, $\widetilde X_t$ behaves like a Brownian motion,
  reflected at $\widetilde R_t$.
\end{proof}

\begin{definition}
  We denote that process $(\widetilde{\mathcal X}, \widetilde{\mathcal
    R})= (\widetilde X_t, \widetilde R_t)_{t\geq 0}$ constructed in
  Definition~\ref{def:graph} by the \emph{Brownian ratchet read off
    from $(\mathcal B, N^\gamma$)}. To stress the dependence on
  $\gamma$ we also write $(\widetilde{\mathcal X}^\gamma,
  \widetilde{\mathcal R}^\gamma) = (\widetilde X_t^\gamma, \widetilde
  R_t^\gamma)_{t\geq 0}$ for this process.
\end{definition}

\begin{remark}
  It is intuitively clear from the graphical construction that the
  long-time behaviour of $\mathcal X$, as given in
  Theorems~\ref{T:LLN} and \ref{T:CLT}, and $\mathcal R$, are
  identical. We will prove this fact in Proposition~\ref{prop:AsympA}.
\end{remark}

\subsection{Scaling property}
From the graphical construction we can deduce the following scaling
property.
\begin{proposition}\label{P:scale}
  Let $(\mathcal X^\gamma, \mathcal R^\gamma)=(X_t^\gamma,
  R_t^\gamma)_{t\geq 0}$ and $(\mathcal X^1, \mathcal R^1)=(X_t^1,
  R_t^1)_{t\geq 0}$ be Brownian ratchets with rates $\gamma>0$ and 1,
  respectively, both starting in $x=0$. Then,
  \begin{align} \label{eq:33}
    (X_t^\gamma,R_t^\gamma)_{t\ge 0} \stackrel{d}{=} \gamma^{-1/3} 
    \bigl(X_{\gamma^{2/3} t}^1,R_{\gamma^{2/3}t}^1\bigr)_{t\ge 0}.
  \end{align}
\end{proposition}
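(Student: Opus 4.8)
The plan is to prove the scaling identity \eqref{eq:33} by exploiting the graphical construction of Section~\ref{sec:graph-repr}, since both sides of the claimed identity in distribution can be read off from a Brownian motion and an independent Poisson point process. By Lemma~\ref{l:key}, it suffices to produce, on a common probability space, a Brownian motion $\mathcal B$ and a Poisson point process $N^\gamma$ of intensity $\gamma\lambda^2$ whose ``read-off'' ratchet equals (in distribution) a rescaled copy of the ratchet read off from $(\mathcal B', N^1)$ with $N^1$ of intensity $\lambda^2$. Concretely, I would start from $(\mathcal B^1, N^1)$ giving the rate-$1$ ratchet $(\widetilde X^1_t,\widetilde R^1_t)_{t\ge 0}$ started in $0$, and then apply the diffusive rescaling $B_t := \gamma^{-1/3} B^1_{\gamma^{2/3}t}$, which is again a standard Brownian motion started in $0$ by Brownian scaling.

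The key computational step is to check how the Poisson point process transforms under the space-time map $\Phi(t,x) = (\gamma^{-2/3}t,\ \gamma^{-1/3}x)$. If $N^1$ has intensity $\lambda^2$, then the image point process $\Phi(N^1)$ has intensity measure equal to the pushforward of $\lambda^2$ under $\Phi$; since $\Phi$ scales time by $\gamma^{-2/3}$ and space by $\gamma^{-1/3}$, the Jacobian is $\gamma^{-2/3}\cdot\gamma^{-1/3} = \gamma^{-1}$, so the pushforward of $\lambda^2$ is $\gamma^{-1}\lambda^2$, meaning $\Phi(N^1)$ has intensity $\gamma^{-1}\lambda^2$ — not what we want. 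The fix is to run the map the other way: define $N^\gamma$ as the image of $N^1$ under $(t,x)\mapsto(\gamma^{-2/3}t,\gamma^{-1/3}x)$ applied to a Poisson process whose intensity is $\gamma^{2}\lambda^2$ — or, more cleanly, just verify directly that if $\widetilde N$ is the image of $N^\gamma$ (intensity $\gamma\lambda^2$) under $(t,x)\mapsto(\gamma^{2/3}t,\gamma^{1/3}x)$, then $\widetilde N$ has intensity $\gamma\cdot\gamma^{-2/3}\cdot\gamma^{-1/3}\lambda^2 = \lambda^2$, i.e.\ $\widetilde N \stackrel{d}{=} N^1$. So starting with the rate-$\gamma$ data $(\mathcal B^\gamma, N^\gamma)$ and pushing forward under this map produces rate-$1$ data.

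With the correct direction of the scaling in hand, I would then trace the recursion in Definition~\ref{def:graph} through the rescaling. The point is that the defining quantities $\tilde\tau_n$, $S_n$, $\widetilde R_t$, $\widetilde X_t$ are all built from the graph of $B$ and the points of $N^\gamma$ via operations (taking infima of hitting times, reading off second coordinates, summing absolute increments, adding $|B_t - S_n|$) that are equivariant under the simultaneous space-time rescaling: if $B^\gamma_t = \gamma^{-1/3}B^1_{\gamma^{2/3}t}$ and the point set transforms as above, then $\tilde\tau^\gamma_n = \gamma^{-2/3}\tilde\tau^1_n$, $S^\gamma_n = \gamma^{-1/3}S^1_n$, and hence $\widetilde R^\gamma_t = \gamma^{-1/3}\widetilde R^1_{\gamma^{2/3}t}$ and $\widetilde X^\gamma_t = \gamma^{-1/3}\widetilde X^1_{\gamma^{2/3}t}$ for all $t$. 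Combined with Lemma~\ref{l:key} (identifying each read-off process with the corresponding $\gamma$-Brownian ratchet) this yields \eqref{eq:33}.

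The main obstacle is bookkeeping rather than depth: one must be careful that the hitting-time definition \eqref{eq:tildetau} interacts correctly with the rescaling (the event ``$\{t\}\times[B_t\wedge S_n, B_t\vee S_n]$ meets $N^\gamma$'' must map exactly to the corresponding event for the rescaled data, with no boundary/null-set issues, which holds because almost surely no Poisson point lies exactly on the Brownian graph and the intervals are nondegenerate), and that the equivariance claim is proved by induction on $n$ with the base case $\tilde\tau_0=0$, $S_0=0$ being invariant under both scalings. A minor point worth stating explicitly is that the independence of $\mathcal B$ and $N^\gamma$ is preserved, so that $(\mathcal B^\gamma, N^\gamma)$ genuinely has the law required by Definition~\ref{def:graph}; this is immediate since deterministic transformations of independent objects remain independent.
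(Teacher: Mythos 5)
Your overall strategy is the same as the paper's: read off both ratchets from a Brownian motion and a Poisson process via Definition~\ref{def:graph}, and exploit the space--time scaling $g(t,x)=(\gamma^{-2/3}t,\gamma^{-1/3}x)$ which preserves the law of Brownian motion. The equivariance argument for the recursion $(\tilde\tau_n,S_n)$ is also the right bookkeeping, and your remarks on null sets and preservation of independence are sound. However, your pushforward calculation for the Poisson intensity is inverted, and this sends the middle of the argument off course even though you recover the correct final relation.

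Concretely: if $\Phi$ has constant Jacobian $J$, then the pushforward of Lebesgue measure is $\Phi_*\lambda^2=|J|^{-1}\lambda^2$, not $|J|\lambda^2$. (Compressing an area by $|J|$ crowds the points, so the intensity goes \emph{up} by $|J|^{-1}$.) For $\Phi(t,x)=(\gamma^{-2/3}t,\gamma^{-1/3}x)$ one has $J=\gamma^{-1}$, hence $\Phi_*\lambda^2=\gamma\lambda^2$, i.e.\ $\Phi(N^1)\stackrel{d}{=}N^\gamma$ --- which is \emph{exactly} what you want. So there is nothing to fix: starting from $(\mathcal B^1,N^1)$ and pushing forward under $\Phi$ already produces rate-$\gamma$ data, and the equivariance of \eqref{eq:tildetau}--\eqref{eq:52} then gives $\widetilde X^\gamma_t=\gamma^{-1/3}\widetilde X^1_{\gamma^{2/3}t}$. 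Your ``fix'' paragraph actually contains the correct rule (multiplying intensity by the inverse Jacobian) and arrives at a true statement $h(N^\gamma)\stackrel{d}{=}N^1$ for $h=\Phi^{-1}$, but this contradicts the earlier (wrong) computation and makes the narrative internally inconsistent; in the final induction you then revert to the $\Phi$-direction anyway. Clean this up by stating the pushforward rule once, correctly, and keeping a single direction throughout. (For what it's worth, the paper's proof has a cognate slip in its displayed $g$: with the exponents as printed, $g(N^\gamma)\stackrel{d}{=}N^{\gamma^2}$; the intended and correct relation is $g(N^1)\stackrel{d}{=}N^\gamma$, equivalently $g^{-1}(N^\gamma)\stackrel{d}{=}N^1$.)
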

\begin{proof} 
  We use the same notation as in Definition~\ref{def:graph}. By
  Lemma~\ref{l:key} it suffices to show \eqref{eq:33} for the Brownian 
  ratchet $({\mathcal X}, {\mathcal R})$ read off from $\mathcal B$
  and $N^\gamma$. Setting
  \begin{align*}
    \begin{split}
      g:
      \begin{cases} \R^2 & \to \R^2 \\
        (t,x) & \mapsto (\gamma^{-2/3}t,\gamma^{-1/3}x).
      \end{cases}
    \end{split} 
  \end{align*}
  we find that $g(N^\gamma) \stackrel d = N^1$. Moreover, for the
  space-time version of the Brownian motion $\mathcal B$,
  i.e. $\widehat{\mathcal B} = (t,B_t)_{t\geq 0}$, started in $0$, we
  have $g(\widehat{\mathcal B}) := (g(t,B_t))_{t\geq 0} \stackrel d =
  \widehat{\mathcal B}$ by the Brownian rescaling. Applying $g$ to each
  space-time point we obtain a path of a Brownian ratchet based on
  $g(\widehat{\mathcal B}) \stackrel d = \widehat{\mathcal B}$ and
  $g(N^\gamma) \stackrel d = N^1$. In other words, \eqref{eq:33} holds
  for $(X_t^\gamma, R_t^\gamma)$ and we are done.
\end{proof}

\begin{remark}
  \label{rem:graph1}
  \begin{enumerate}
  \item From the scaling property of the Brownian ratchet,
    Proposition~\ref{P:scale}, we have 
    \begin{align*}
      \lim_{t \to\infty} \frac{\E[X_t^\gamma]}{t} & = \lim_{t \to
        \infty}\gamma^{-1/3} \frac{\E[X_{\gamma^{2/3}t}^1]}{t} =
      \lim_{t \to \infty}\gamma^{1/3}
      \frac{\E[X_{\gamma^{2/3}t}^1]}{\gamma^{2/3}t} =
      \gamma^{1/3}   \lim_{t \to\infty} \frac{\E[X_{t}^1]}{t} \\
      \intertext{and} \lim_{t \to\infty} \frac{\Var[X_t^\gamma]}{t} &
      = \lim_{t \to \infty}\gamma^{-2/3}
      \frac{\Var[X_{\gamma^{2/3}t}^1]}{t} = \lim_{t \to \infty}
      \frac{\Var[X_{\gamma^{2/3}t}^1]}{\gamma^{2/3}t} = \lim_{t
        \to\infty} \frac{\Var[X_{t}^1]}{t}.
    \end{align*}
    In particular, the second limit does not depend on $\gamma$. The
    existence of the limits will be proven in Section~\ref{sec:proof}
  \item Thanks to the scaling property of the Brownian ratchet, we may
    choose the most convenient value of $\gamma=\tfrac 12$ in most
    proofs below. Afterwards (i.e.\ in the proofs of Theorems~\ref{T:LLN} and
    \ref{T:CLT}) we use Proposition~\ref{P:scale} to  
    obtain results for general $\gamma$.
  \end{enumerate}
\end{remark}

\section{The Brownian ratchet at jump times}
\label{sec:invar-distr}
In this section we prove existence and uniqueness of the invariant
distribution of a Markov chain representing the Brownian ratchet at
jump times (see Definition~\ref{def:mc}). For this, we start in
Lemma~\ref{lem:Btaux} with studying the time to the next jump and the
increment of the Brownian ratchet during that time starting in $x$ in
expectation. We then derive properties of the tails of the waiting
time between jumps (Lemma~\ref{lem:taubound}). Afterwards we come to
existence (Proposition~\ref{P:exInv}) and uniqueness
(Proposition~\ref{P:coupl}) of an invariant distribution of the
Brownian ratchet at jump times. We conclude the section by computing
the waiting time to the next jump and increments for a Brownian
ratchet in equilibrium (Proposition~\ref{P:incrInv}). In the whole
section we restrict ourselves to the notationally convenient case
$\gamma=\tfrac 12$. Our results can easily be extended to general
$\gamma$ using the scaling property, Proposition~\ref{P:scale}.

\begin{definition}\label{def:mc}
  For the Brownian ratchet $(\mathcal X, \mathcal R)$, set $\tau_0=0$
  and denote the sequence of jump times of $\mathcal R$ by $\tau_1,
  \tau_2,\dots$. We set $(\mathcal Y, \mathcal W, \eta) := (Y_n, W_n,
  \eta_n)_{n=1,2,\dots}$ by
  \begin{align} \label{eq:53} 
    Y_n=X_{\tau_n}-R_{\tau_{n}}, \quad W_n = R_{\tau_n}-R_{\tau_{n-1}} \quad
    \text{and} \quad \eta_n=\tau_n-\tau_{n-1}. 
  \end{align}
  Observe that for any $k$, $(Y_n, W_n, \eta_n)_{n=k+1,k+2,\dots}$
  depends on $(Y_n, W_n, \eta_n)_{n=1,\dots,k}$ only through $Y_k$. In
  particular, $(\mathcal Y, \mathcal W, \eta)$ is a Markov chain.
\end{definition}

\noindent
In this section we will frequently use Airy functions. We recall basic
facts concerning these functions first.

\begin{remark}[Airy functions]\label{rem:airy}
  The Airy functions $Ai$ and $Bi$ are two linearly independent solutions of 
  the differential equations 
  \begin{align}
    \label{eq:4}
    u''(x) - x u(x)=0
  \end{align}
  with $Ai(x)\xrightarrow{x\to\infty} 0$ and
  $Bi(x)\xrightarrow{x\to\infty} \infty$. We only need properties of
  the Airy functions on the non-negative real line. For further
  properties and explicit definitions of the Airy functions in terms
  of integrals or Bessel functions we refer the reader to
  \cite{AbramowitzStegun:1972}.

  Denoting the gamma function by $\Gamma$ we have \citep[see][10.4.4
  and 10.4.5]{AbramowitzStegun:1972}
  \begin{align} \label{eq:54}
    Ai(0) & = \frac{Bi(0)}{\sqrt 3}
    = \frac{1}{3^{2/3}\Gamma(2/3)}
    \\ 
    \label{eq:55} 
    - Ai'(0) &  =\frac{Bi'(0)}{\sqrt 3}  = \frac{1}{3^{1/3}\Gamma(1/3)}.     
  \end{align}
  The Wronskian, 
  \begin{align}\label{eq:54a}
    Bi'(x)Ai(x)-Ai'(x)Bi(x)
  \end{align}
  which is the determinant of the fundamental matrix of the
  differential equation \eqref{eq:4}, does not depend on $x$ and is
  given by
  \begin{align} \label{eq:56} w:=Bi'(0)Ai(0)-Ai'(0)Bi(0) & =
    \frac2{\sqrt{3} \Gamma(\frac13)\Gamma(\frac23)} = \frac1\pi.
  \end{align}
  (The last equality follows from Euler's reflection formula
  $\Gamma(z)\Gamma(1-z)=\pi/\sin \pi z$.)

  The asymptotics of the Airy functions are well known, see e.g.\
  \citet[][p.~448--449]{AbramowitzStegun:1972} or
  \citet[][p.~161]{Janson:07}.  As $x \to \infty$ we have
  \begin{align} \label{eq:57} Ai(x) & \sim \frac{\pi^{-1/2}}2 x^{-1/4}
    e^{-2x^{3/2}/3}.
  \end{align}
  (Here, as usual, $a(x)\sim b(x)$ as $x \to \infty$ means that
  $a(x)/b(x)\to 1$ as $x \to \infty$.) The integral of $Ai$ is given
  by
  \begin{align} \label{eq:59} \int_0^\infty Ai(u)\,du = \frac13.
  \end{align}
  We note that 
  \begin{align} \label{eq:58} Gi(x) := Ai(x)\int_0^xBi(y)\,dy +
    Bi(x)\int_x^\infty Ai(y)\,dy.
  \end{align}
  is solution of the inhomogeneous equation
  \begin{align*}
    u''(x)-xu(x)=-\frac1\pi.
  \end{align*}

\end{remark}

\subsection{Increments for single jumps of the Brownian ratchet}
Between jumps of $\mathcal R$, the process $\mathcal X$ behaves like reflected
Brownian motion. A jump time of the ratchet can be seen as a killing time of
the reflected Brownian motion. Hence, we study a reflected Brownian motion,
killed at rate $\gamma$ times its value. Recall our assumption
$\gamma=\tfrac12$.  

\begin{definition}[Killed Brownian motion]
  \label{def:killedBrownian}
  Let $(B_t)_{t\ge 0}$ denote Brownian motion started in $x \ge 0$ and
  let
  \[\tau = \inf\Big\{t>0: \tfrac 12 \int_0^t\abs{B_s} \, ds 
  \geq \xi\Big\},\] where $\xi$ is an exponentially distributed,
  independent random variable with rate $1$. Then,
  $\widetilde{\mathcal B}:=(\widetilde B_t)_{t\geq 0}$ with
  $\widetilde B_t:=|B_{t}|$ for $0\leq t < \tau$ and $\widetilde
  B_t=\Delta$ for $t\geq \tau$ and some $\Delta\notin \mathbb R$ is a
  \emph{reflected Brownian motion, killed at rate $\tfrac 12 |B|$}. We 
  denote the probability measure of the Brownian motion, started in
  $x$, by $\Pr_x$.
\end{definition}

\noindent
To compute the speed of the ratchet we need the expectations of $\tau$
and $B_{\tau}$.

\begin{lemma}\label{lem:Btaux}
  Let $\widetilde{\mathcal B}$ be reflected Brownian motion, killed at
  rate $\tfrac 12 |B|$. Then,
  \begin{align}
    \label{eq:48}
    \E_x[\widetilde B_{\tau-}] & = x + \frac{2\pi Ai(x)}{3^{1/6}\Gamma(2/3)}  \\
    \intertext{and}
    \label{eq:49}
    \E_x[\tau] & = 2\pi (Gi(x)+3^{-1/2}Ai(x)).
  \end{align}
\end{lemma}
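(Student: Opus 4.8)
The plan is to set up a Feynman--Kac boundary value problem for the two quantities in question. Write $u(x) := \E_x[\widetilde B_{\tau-}]$ and $v(x) := \E_x[\tau]$ for the reflected Brownian motion killed at rate $\tfrac12|B|$ starting from $x \ge 0$. Since between killing the process is reflected Brownian motion on $[0,\infty)$ with generator $\tfrac12 \frac{d^2}{dx^2}$ and Neumann boundary condition at $0$, and the killing rate is $\tfrac12 x$, standard Feynman--Kac / Dynkin arguments give that $u$ solves the inhomogeneous ODE
\begin{align*}
  \tfrac12 u''(x) - \tfrac12 x\, u(x) = -\tfrac12 x \cdot x, \qquad u'(0) = 0,
\end{align*}
where the right-hand side $-\tfrac12 x \cdot (\text{value at killing})$ accounts for the contribution $\widetilde B_{\tau-} = x$ picked up when killing happens instantaneously at $x$ (more precisely: on an interval $[0,dt]$, with probability $\tfrac12 x\,dt$ we are killed and collect $x$, otherwise we move on), while for $v$ the analogous equation is
\begin{align*}
  \tfrac12 v''(x) - \tfrac12 x\, v(x) = -1, \qquad v'(0) = 0.
\end{align*}
So the first step is to justify these ODEs and boundary conditions rigorously — applying Itô to $t \mapsto f(\widetilde B_t)$ for a suitable $f$, stopping at $\tau \wedge T$, taking expectations, and letting $T \to \infty$ with the appropriate integrability/growth control so that the boundary term at infinity vanishes. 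This growth control is the step I expect to be the main obstacle: one needs a priori bounds ensuring $u,v$ are the \emph{minimal} (subexponentially growing) solutions, which is what pins down the coefficients; the tail estimates for $\tau$ developed in Lemma~\ref{lem:taubound} (as promised in the section outline) are presumably exactly what supplies this.

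Second, I solve the ODEs. After the substitution killing the $\tfrac12$'s, both reduce to $\phi''(x) - x\phi(x) = g(x)$ on $[0,\infty)$ with $\phi'(0)=0$, whose homogeneous solutions are $Ai$ and $Bi$ from Remark~\ref{rem:airy}. For $v$ we have $g \equiv -1$: since $Gi$ from \eqref{eq:58} solves $u'' - xu = -1/\pi$, the function $\pi Gi(x)$ solves $u''-xu = -1$, so the general subexponentially-decaying solution is $v(x) = \pi Gi(x) + c\, Ai(x)$ for a constant $c$ (the $Bi$ component is excluded by growth at $\infty$). Imposing $v'(0)=0$ determines $c$: using $Gi'(0)$ together with $Ai'(0) = -1/(3^{1/3}\Gamma(1/3))$ from \eqref{eq:55} and the Wronskian value $w = 1/\pi$ from \eqref{eq:56}, one solves for $c$ and should land on $c = \pi/\sqrt3$, i.e.\ $v(x) = \pi Gi(x) + \tfrac{\pi}{\sqrt3} Ai(x)$, matching \eqref{eq:49} after restoring the factor $2\pi = 2 \cdot \pi$ — wait, the stated answer is $2\pi(Gi(x) + 3^{-1/2}Ai(x))$, so in fact the inhomogeneity is $-1$ against generator $\tfrac12(\cdot)'' - \tfrac12 x(\cdot)$, which is $\phi'' - x\phi = -2$, giving the extra factor $2$; I would track this factor carefully.

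Third, for $u$ the inhomogeneity is $g(x) = -x^2$ (from $\phi'' - x\phi = -2 \cdot \tfrac12 x \cdot x$... again up to the factor-$2$ bookkeeping, effectively $-x\cdot x$ scaled). Here the clean trick is to guess a particular solution of the form $\phi_p(x) = x + (\text{Airy-type correction})$: note that $\phi(x) = x$ satisfies $\phi'' - x\phi = -x^2$, so $u(x) - x$ solves the \emph{homogeneous} equation $\psi'' - x\psi = 0$ with the boundary condition $\psi'(0) = u'(0) - 1 = -1$. The only subexponentially-growing homogeneous solution is a multiple of $Ai$, so $u(x) = x + \kappa\, Ai(x)$, and $\psi'(0) = \kappa\, Ai'(0) = -1$ forces $\kappa = -1/Ai'(0) = 3^{1/3}\Gamma(1/3)$; comparing with the claimed $\kappa = 2\pi/(3^{1/6}\Gamma(2/3))$ and using $\Gamma(1/3)\Gamma(2/3) = 2\pi/\sqrt3$ (Euler reflection, as in \eqref{eq:56}) together with $3^{1/3} \cdot \sqrt3 / (3^{1/6}) = 3^{1/3+1/2-1/6} = 3^{2/3}$... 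I would verify the two expressions for $\kappa$ agree. This last step — the identification via Airy-function identities — is routine once the ODE and boundary conditions are in hand; the genuine content is the Feynman--Kac justification in step one, and in particular ruling out the $Bi$ (exponentially growing) component, for which I would invoke the tail bounds on $\tau$ and the elementary bound $\widetilde B_{\tau-} \le x + \sup_{s\le\tau}(B_s - x)^+$ controlled via those same estimates.
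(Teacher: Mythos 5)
Your plan is correct and all of its computations check out, but it is presented in the dual language to the paper's argument. The paper works entirely on the "integral" side of the standard diffusion theory from Borodin--Salminen: it builds the Green function $G(x,y)=w^{-1}\psi(x\wedge y)\phi(x\vee y)$ from $\phi=Ai$ and $\psi=Bi+\sqrt3\,Ai$ (the latter chosen so that $\psi'(0)=0$), uses the fact that the killing position has density $G(x,y)k(y)$ with $k(y)=y$, and then evaluates $\int_0^\infty y^2G(x,y)\,dy$ and $2\int_0^\infty G(x,y)\,dy$ via the antiderivative identity $\int^x u^2Ai(u)\,du=xAi'(x)-Ai(x)+C$. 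You instead write down the boundary value problems $u''-xu=-x^2$, $v''-xv=-2$ with Neumann condition at $0$ and solve them; your particular solution $\phi_p(x)=x$ for the first equation is exactly the slick shortcut that replaces the paper's integral manipulations, and your constants ($\kappa=-1/Ai'(0)=2\pi/(3^{1/6}\Gamma(2/3))$ via Euler reflection, and $c=2\pi/\sqrt3$ from $Gi'(0)=Bi'(0)/3=-Ai'(0)/\sqrt3$) agree with \eqref{eq:48} and \eqref{eq:49}. What the paper's route buys is that the probabilistic representation is quoted wholesale from the literature, so no Feynman--Kac/It\^o justification or growth analysis is ever needed; what your route buys is shorter algebra, at the price of the step you correctly identify as the real burden, namely proving the representation and excluding the $Bi$ component. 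Your proposed control there is sound and non-circular (the uniform bound $\Pr_x(\tau>t)\le Ce^{-ct}$ of Lemma~\ref{lem:taubound} gives $v$ bounded and $u(x)\le x+C'$, both subexponential, and that lemma does not rely on the present one), but note that in a full write-up you would still need to carry out the It\^o/Dynkin argument with the stopping at $\tau\wedge T$ and $T\to\infty$, verify $Gi'(0)$ by differentiating \eqref{eq:58} (the cross terms cancel), and keep the factor of $2$ from the generator $\tfrac12\frac{d^2}{dx^2}$ straight, as you yourself flag.
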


\begin{proof}
  First we compute the Green function of $\widetilde{\mathcal B}$
  following the scheme outlined on p.~18--19 in
  \cite{BorodinSalminen:2002}. Set $\sigma_y:= \inf\{t>0: \widetilde
  B_t = y\}$. The diffusion process $\widetilde{\mathcal B}$ is
  regular in the sense that for each $x,y \ge 0$
  \begin{align*}
    \Pr_x\big(\sigma_y < \infty\big) >0.
  \end{align*}
  Such a diffusion is called transient if for some (and then for all)
  $x,y, x\neq y$
  \begin{align*}
    \Pr_x\big(\sigma_y = \infty\big) > 0.
  \end{align*}
  As $\widetilde{\mathcal B}$ can be killed in each interval with
  positive probability it is transient. Let $p(.;.,.)$ be the
  transition density of $\widetilde{\mathcal B}$ with respect to the
  speed measure, which is given by $m(dx)=2\,dx$. In the transient
  case the Green function, defined by
  \begin{align*}
    G(x,y):=\int_0^\infty p(t;x,y)\,dt,
  \end{align*}
  is finite for each $x,y \ge 0$ and can be computed in terms of
  solutions of the differential equation \eqref{eq:4}
  with appropriate boundary conditions at $0$ and $\infty$.
  Two linearly independent solutions are given by the Airy functions
  $Ai$ and $Bi$, see Remark~\ref{rem:airy}. The solutions of
  \eqref{eq:4} that we need are obtained as follows. For $x\ge 0$ let
  \begin{align*}
    \phi(x)& =Ai(x) \\
    \psi(x)& =C_1 Bi(x) +C_2 Ai(x).
  \end{align*}
  The function $\phi$ is a decreasing solution of \eqref{eq:4} and
  satisfies 
  $\lim_{x\to\infty}\phi(x)=0$. The constants $C_1$ and $C_2$ have to
  be chosen such that the function $\psi$ is increasing and (this is
  the requirement for reflecting boundary at zero)
  \begin{align} \label{eq:26} 
    \psi'(0)=0 
  \end{align}
  It is easy to see, that in order to satisfy \eqref{eq:26} we must
  have $C_2= -C_1Bi'(0)/Ai'(0) = C_1\sqrt{3}$ (see \eqref{eq:55} for
  the last equality). We may take any positive number $C_1$ in the
  definition of $\psi$ because, as one can see in \eqref{eq:green},
  multiplication of $\phi$ and $\psi$ by some factors does not change
  the Green function. So we set $C_1=1$ and $C_2=\sqrt{3}$. The
  Wronskian (which is independent of $x$) is given as in \eqref{eq:56}
  by
  \begin{align}
    \label{eq:23}
      w =\psi'(0)\phi(0) - \psi(0)\phi'(0) = Bi'(0)Ai(0)-Ai'(0)Bi(0) = \frac1\pi.
  \end{align}
  Now the Green function is given by
  \begin{align}\label{eq:green}
    G(x,y) :=
    \begin{cases}
       w^{-1}\psi(x)\phi(y) & : \; 0 \le x\le y\\
      w^{-1}\psi(y)\phi(x) & : \; 0 \le y \le x.
    \end{cases}
  \end{align}
  The density of the killing measure with respect to the speed measure
  is $\tfrac 12 y$, hence it is $k(y)=y$ with respect to Lebesgue
  measure \citep[see][p.~17]{BorodinSalminen:2002}. Furthermore the
  killing position of the Brownian motion started in $x$ has density
  \begin{align*}
    G(x,y) k(y) 
  \end{align*}
  with respect to Lebesgue measure \citep[see][p.~14]{BorodinSalminen:2002}.  
  Now we can compute the expected killing position starting from $x$:
  \begin{align} \label{eq:1} \E_x[\widetilde B_{\tau-}]= \int_0^\infty
    y^2 G(x,y) \,dy = \frac{1}{w} \left(\phi(x) \int_0^x y^2\psi(y)\,dy
      + \psi(x) \int_x^\infty y^2\phi(y)\,dy\right).
  \end{align}
  We compute the integrals separately.  Using the fact that Airy
  functions are solutions of the differential equation \eqref{eq:4} we
  have
  \begin{align*}
    \int^x u^2 Ai(u)\, du & = \int^x u Ai''(u) \, du = xAi'(x) - Ai(x) + C \\
    \int^x u^2 Bi(u)\, du & = \int^x u Bi''(u) = xBi'(x) -Bi(x) + C.
  \end{align*}
  Thus, 
  \begin{align*}
    \phi(x) \int_0^x y^2\psi(y)\,dy & = Ai(x) \big(x
    Bi'(x)-Bi(x)+Bi(0) + \sqrt{3} (x Ai'(x) - Ai(x) + Ai(0)\big) \\
    \intertext{and} \psi(x) \int_x^\infty y^2\phi(y)\,dy & =
    \big(Bi(x)+\sqrt{3}Ai(x)\big)\big(-x Ai'(x)+Ai(x)\big).
  \end{align*}
  In the last equality we have used the asymptotic relation \eqref{eq:57}.
  Substituting in \eqref{eq:1} and simplifying we get 
  \begin{align*}
    \E_x[\widetilde B_{\tau}] = \frac{1}{w}\Big(x
    h(x)+Ai(x)(Bi(0)+\sqrt{3}Ai(0))\Big),
  \end{align*}
  where
  \begin{align*}
    h(x) = (Ai(x) Bi'(x) - Ai'(x) Bi(x)) =w.
  \end{align*}
  Using \eqref{eq:54} we arrive at
  \begin{align}
    \label{eq:12}
    \E_x[\widetilde B_{\tau}] & = x + \frac{2Ai(x)Bi(0)}{w} = x+ \frac{2\pi
      Ai(x)}{3^{1/6}\Gamma(2/3)}
  \end{align}
  which shows \eqref{eq:48}. For \eqref{eq:49}, the expected jump time
  of the reflection boundary if started from $x$ is given
  by 
  \begin{align} \label{eq:5} 
    \begin{split}
      \E_x\big[\tau\big] & = \int_0^\infty \int_0^\infty p(t;x,y)
      \,dt \, m(dy) \\ & = 2\int_0^\infty G(x,y)\,dy \\ & = \frac 2w
      \Big( \phi(x)\int_0^x \psi(y) dy + \psi(x) \int_x^\infty \phi(y)
      dy\Big) \\ & = \frac2{w}(Gi(x) + 3^{-1/2}Ai(x)) \\ & = 2\pi
      (Gi(x)+3^{1/2} Ai(x)),
    \end{split}
  \end{align} 
  where we have used \eqref{eq:59} for the next to last equality and
  the definition of $Gi$ in \eqref{eq:58}.
\end{proof}

In the next section we will need boundedness of second moments of
$\tau$, which is a consequence of the following lemma. Note that one
also could use Kac's moment formula \citep[see e.g.][]{0962.60067} to
compute the second moment of $\tau$ directly.

\begin{lemma}[Bound on tail distribution of $\tau$] There exist finite
  positive constants $c$ and $C$ such that for any $x,t \ge 0$
  \label{lem:taubound}
   \begin{align*}
     \Pr_x(\tau>t) \le Ce^{-c t}.
   \end{align*} 
\end{lemma}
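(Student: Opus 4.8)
The plan is to establish a uniform (in the starting point $x$) exponential tail bound for the killing time $\tau$ by exhibiting, for every starting position, a probability bounded below that killing occurs within one unit of time, and then iterating via the Markov property. Recall that $\tau$ is the first time the additive functional $\tfrac12\int_0^t|B_s|\,ds$ exceeds an independent rate-$1$ exponential variable $\xi$. The key observation is that on the event that $\int_0^1 |B_s|\,ds$ is large (say $\geq 2$), the functional $\tfrac12\int_0^1|B_s|\,ds \geq 1 \geq$ has a definite chance to already exceed $\xi$; more precisely, conditionally on $\int_0^1|B_s|\,ds = a$, we have $\Pr(\tau \leq 1 \mid \mathcal B) = 1 - e^{-a/2}$. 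Hence it suffices to show
\[
\inf_{x\geq 0} \Pr_x\!\Big(\int_0^1 |B_s|\,ds \geq 2\Big) =: q > 0.
\]
This infimum is over all real starting points (recall $\widetilde B_t = |B_t|$, and $B$ starts at $x\ge 0$), and the point is that starting \emph{further} from $0$ only makes $\int_0^1|B_s|\,ds$ stochastically larger, so the infimum is attained in the limit $x\downarrow 0$ (or at $x=0$), where it is clearly a strictly positive constant — e.g. $\Pr_0(\int_0^1|B_s|\,ds\geq 2)>0$ since $\int_0^1|B_s|\,ds$ is an a.s.\ positive random variable with unbounded support. It follows that $\Pr_x(\tau > 1) \leq 1 - q(1-e^{-1}) =: p < 1$ uniformly in $x$.

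For the iteration, I would use the strong Markov property of the process $(\widetilde{\mathcal B}_t)$ at integer times together with the fact that the killing clock is memoryless: conditionally on survival up to time $n$ and on $\widetilde B_n = x'$, the probability of surviving one more unit of time is again at most $p$, because the residual exponential clock is again rate-$1$ exponential by the lack-of-memory property, and the bound from the previous paragraph holds for every starting point $x'\geq 0$. Therefore $\Pr_x(\tau > n) \leq p^n$ for all integers $n\ge 0$ and all $x\geq 0$. For general $t\ge 0$, write $n = \lfloor t\rfloor$ and monotonicity gives $\Pr_x(\tau > t) \leq \Pr_x(\tau > n) \leq p^{\lfloor t\rfloor} \leq p^{-1} e^{-ct}$ with $c = -\log p > 0$; absorbing $p^{-1}$ into the constant $C$ yields the claim.

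The only genuinely non-routine step is the uniformity of $q$ over all starting points $x\geq 0$, which I expect to be the main obstacle — one needs that spending one unit of time sufficiently far from the reflection point $0$ does not hurt (indeed helps) the chance of accumulating enough area under $|B|$. This is intuitively obvious and can be made rigorous by a coupling argument: couple a Brownian motion from $x$ with one from $0$ so that their absolute values are ordered until they meet, after which they coincide, giving $\int_0^1|B_s^{(x)}|\,ds \geq \int_0^1|B_s^{(0)}|\,ds$ pathwise; alternatively one can simply note $\Pr_x(\int_0^1|B_s|ds \ge 2) \ge \Pr_x(\inf_{s\le 1} B_s \ge \text{something}) \cdot (\ldots)$ using that from any $x\ge 0$ the motion stays in, say, $[x, x+1]$ with probability bounded below, on which event $\int_0^1 |B_s|\,ds \ge$ a fixed positive amount — but this last variant only handles $x$ bounded away from a problematic region, so the cleanest route is the coupling/stochastic-monotonicity statement. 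Once uniformity of $q$ is in hand, the rest is the standard iteration above.
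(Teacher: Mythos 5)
Your proof is correct, but it takes a genuinely different route from the paper's. The paper first uses the monotonicity bound $\Pr_x(\tau>t)\le\Pr_0(\tau>t)$ to reduce to $x=0$, then applies Brownian scaling to write $\Pr_0(\tau>t)=\E\bigl[e^{-\tfrac12 t^{3/2}\mathcal B}\bigr]$ with $\mathcal B=\int_0^1|B_s|\,ds$, and estimates this Laplace transform analytically using the known asymptotic expansions of the density of $\mathcal B$ near $0$ and near $\infty$ (from Janson's survey). Your argument instead establishes a one-step survival bound $\sup_x\Pr_x(\tau>1)\le p<1$, uniform in the starting point via stochastic monotonicity of reflected Brownian motion (same driving noise, Skorokhod representation gives pathwise ordering, hence ordering of $\int_0^1|B_s|\,ds$), and then iterates through the Markov property together with the lack-of-memory property of the exponential killing clock to get $\Pr_x(\tau>n)\le p^n$, which gives the exponential tail. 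Both approaches deliver the same qualitative conclusion $\Pr_x(\tau>t)\le Ce^{-ct}$. Your argument is more elementary and more robust: it needs no explicit information about the distribution of the Brownian area $\mathcal B$ beyond the trivial fact that it has positive mass above any level, and it would transfer unchanged to other killing rates that are bounded below away from the boundary. The paper's Laplace-transform route is more analytic; it is the natural one if one also wants precise control on constants or on the exact decay exponent of $\E[e^{-\lambda\mathcal B}]$, which they package as an intermediate estimate of independent interest. One small caution in your write-up: the coupling you invoke is cleanest if stated via the Skorokhod representation $R^{(x)}_s=x+W_s+\sup_{u\le s}\bigl(-(x+W_u)\bigr)^+$ with the same driving $W$, which gives $R^{(x)}_s\ge R^{(y)}_s$ for $x\ge y\ge 0$ pathwise; the phrase ``couple so that absolute values are ordered until they meet'' gestures at this but the Skorokhod form makes the monotonicity immediate.
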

\begin{proof}
  We use the notation of Definition~\ref{def:killedBrownian}.
  Clearly,
  \begin{align}
    \label{eq:46}
    \Pr_x(\tau > t) \le \Pr_0(\tau>t),
  \end{align}
  so it is enough to prove the assertion in the case $x=0$ and we omit
  the subscript $0$ in the proof.  Using the Brownian scaling property
  $(B_s)_{s\geq 0} \stackrel d = (t^{1/2}B_{s/t})_{s\geq 0}$ for any
  $t>0$ and a usual change of variables, we have
  \begin{align*}
    \int_0^{t} \abs{B_s}\, ds\stackrel{d}{=} t^{3/2} \int_0^1
    \abs{B_s}\, ds.
  \end{align*}
  We set $\mathcal{B}=\int_0^1\abs{B_s}\,ds$
  and use $f_{\mathcal{B}}$ to denote the density of that random
  variable. By the definition of $\tau$
  \begin{align}\label{eq:401}
    \Pr(\tau > t) & = \Pr\big(\tfrac 12 t^{3/2} \mathcal{B} <
    \xi\big)
    = \int_0^\infty f_{\mathcal{B}}(x)e^{-\tfrac 12 t^{3/2}x} \, dx =
    \E\big[e^{-\tfrac 12 t^{3/2} \mathcal B}\big],
  \end{align}
  i.e., the tail probability $\Pr(\tau>t)$ is given by the Laplace
  transform of $\mathcal{B}$ evaluated at $\tfrac 12 t^{3/2}$.  We
  consider the Laplace transform evaluated at $\lambda>0$ first and
  then set $\lambda = \tfrac 12 t^{3/2}$. A series representation of
  the Laplace transform of $\mathcal B$ in terms of Airy functions
  goes back to \citet{Kac:1946}. Instead of using the formula for the
  Laplace transform directly we use the asymptotic expansions of
  $f_{\mathcal{B}}$ to study the behaviour of the Laplace transform
  for large $t$.  We write
  \begin{align}
    \label{eq:41}
    \E[e^{-\lambda\mathcal{B}}]= \int_0^r e^{- \lambda
      x}f_{\mathcal{B}}(x)\,dx + \int_r^\infty
    e^{-\lambda x}f_{\mathcal{B}}(x)\,dx
  \end{align}
  and estimate both integrals on the right hand side for appropriately
  chosen $r$. The asymptotic expansions of  
  $f_{\mathcal{B}}$ are \citep[see][p.~108]{Janson:07}
  \begin{align}
    \label{eq:38}
    f_{\mathcal{B}}(x)  & = \frac{\sqrt{6}}{\sqrt{\pi}} e^{-3x^2/2}
    \big(1 + \mathcal{O}(x^{-2})\big)  \; 
    \text{ as } x \to \infty.   
    \\ \intertext{and} 
    \notag
    f_{\mathcal{B}}(x) & = A
    e^{-a/x^2}\big(x^{-2}+\mathcal{O}(1)\big) \; \text{ as } x \to
    0.
  \end{align}
  Here, $A$ is a positive constant, $a=2\abs{a_1'}^3/27 \approx
  0.0783$ and $a_1'$ is the largest real zero of the derivative of the
  Airy function $Ai$.
  
  Set $g(x):= 2 x^2\log x$ for $x>0$ and $g(0):=0$. Note that $g$ is
  continuous and negative on $(0,1)$. We choose $r>0$ such that $g(x)
  \ge -a/2$ for $x \in [0,r]$. Then for some $\widetilde{A} \ge A$ we
  have for all $x \in [0,r]$
  \begin{align*}
    f_{\mathcal{B}}(x) \le \widetilde{A} e^{-\frac{a}{x^2}}x^{-2} = 
    \widetilde{A} e^{-\frac{a+g(x)}{x^2}} \le  \widetilde{A}
    e^{-\frac{a}{2x^2}}. 
  \end{align*}
  It is easy to see that the function $x \mapsto \lambda x +
  \frac{a}{2x^2}$ attains its maximum in $x^*=(a/\lambda)^{1/3}$. For
  $\lambda \ge a/r^3$ we have $x^* \le r$ and therefore
  \begin{align}
    \label{eq:40}
    \begin{split}
      \int_0^re^{-\lambda x} f_{\mathcal{B}}(x)\, dx & \le
      \widetilde{A} \int_0^r
      e^{-\lambda x- \frac{a}{2x^2}} \, dx \\
      & \le \widetilde{A} r e^{-\lambda x^*- \frac{a}{2(x^*)^2}} =
      \widetilde{A} r e^{-3a^{1/3}\lambda^{2/3}/2}.
    \end{split}
  \end{align}
  Now we estimate the second integral in \eqref{eq:41}. From the
  asymptotic expansion \eqref{eq:38} and continuity of
  $f_{\mathcal{B}}$ it follows that there exists a finite positive
  constant $\widehat{A}$ such that
  \begin{align*}
    f_{\mathcal{B}}(x) \le \widehat{A}\frac{\sqrt{6}}{\sqrt\pi}
    e^{-3x^2/2}. 
  \end{align*}
  Completion of the square in the exponent followed by a change of
  variables yield 
  \begin{align*}
    \frac{\sqrt{6}}{\sqrt\pi} \int_r^\infty e^{-\lambda
      x-\frac{3x^2}2}\, dx & = \frac{\sqrt{6}}{\sqrt\pi} \int_r^\infty
    e^{-\frac{(3x +\lambda)^2}6+\frac{\lambda^2}6}\, dx \\ & =
    \frac{\sqrt{6}}{\sqrt\pi} e^{\frac{\lambda^2}{6}}\frac1{3}
    \int_{\frac{3r+\lambda}{\sqrt 3}}^\infty e^{-\frac{y^2}2}\,dy =
    e^{\frac{\lambda^2}6}\Big(1-\Phi\Big(\frac{3r+\lambda}{\sqrt 3}\Big)\Big) \\
    & \le e^{\frac{\lambda^2}6} \frac{\sqrt{3}}{3r+\lambda}
    e^{-\frac12(\frac{3r+\lambda}{\sqrt 3})^2} =
    \frac{{\bar{A}}}{3r+\lambda} e^{-\lambda r}
  \end{align*}
  for some constant $\bar{A}$. Here $\Phi$ denotes
  the distribution function of the standard Gaussian distribution. In
  the next to last step we used the well known inequality $1-\Phi(x)
  \le x^{-1}e^{-x^2/2}$. Combining the last two displays we obtain
  \begin{align}
    \label{eq:44}
    \int_r^\infty e^{-\lambda x}f_{\mathcal{B}}(x)\,dx \le
    \widetilde{A}\frac{\bar{A}}{3r+\lambda} e^{-\lambda r}.
  \end{align}
  Substitution of the estimates \eqref{eq:40} and \eqref{eq:44} in
  \eqref{eq:41} shows that there exists finite positive constants $c$
  and $C$ such that 
  \begin{align*}
    \E[e^{-\lambda \mathcal B}] \le C e^{- c (2\lambda)^{2/3}}, \quad
    t\ge 0.
  \end{align*}
  Now the assertion of the lemma follows from the last equation and
  \eqref{eq:401},
  \begin{align*}
    \Pr(\tau > t) = \E\big[ e^{-\tfrac 12 t^{3/2} \mathcal B}\big]
    \le C e^{-c t}.
  \end{align*}
\end{proof}

\subsection{Existence and uniqueness of the invariant distribution at
  jump times}
The Markov chain $(\mathcal Y, \mathcal W, \eta)$ describes the
Brownian ratchet at jump times. Next, we show existence (Proposition
\ref{P:exInv}) and uniqueness (Proposition \ref{P:coupl}) of the
invariant distribution for this Markov chain.

\begin{proposition}\label{P:exInv}
  Denote by $(P^n_x)_{n \in \N}$ the distribution of $(\mathcal Y, \mathcal W, 
  \eta)$ (introduced in Definition~\ref{def:mc}) induced by a Brownian
  ratchet starting with $(X_0,R_0)=(x,0)$. For each $x \ge 0$ there
  exists $i_1, i_2,\dots$ and a distribution $P^x$ that is an invariant 
  distribution for $(\mathcal Y, \mathcal W, \eta)$ and
   \begin{align} \label{eq:22} \frac1{i_n} \sum_{k=1}^{i_n} P^k_x 
    \xRightarrow{n\to\infty} P_x,
  \end{align}
  where `` $\Rightarrow$'' denotes weak convergence of probability
  measures.
\end{proposition}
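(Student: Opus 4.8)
The plan is to establish existence of an invariant distribution via a Krylov--Bogolyubov (tightness + averaging) argument, where the tail bound of Lemma~\ref{lem:taubound} supplies the needed compactness. First I would note that the Markov chain $(\mathcal Y, \mathcal W, \eta)$ has the special structure observed in Definition~\ref{def:mc}: the transition depends on the past only through $Y_k = X_{\tau_k} - R_{\tau_k}$, so it suffices to first control the $\mathcal Y$-marginal, which is itself a Markov chain on $[0,\infty)$. Concretely, $Y_{n+1}$ is distributed as $\widetilde B_{\tau-}$ (the killing position of reflected Brownian motion killed at rate $\tfrac12|B|$) started from $Y_n$, \emph{minus} the new reflection point, i.e.\ $Y_{n+1} = \widetilde B_{\tau-} - U$ where $U$ is uniform on $[0,\widetilde B_{\tau-}]$; by Lemma~\ref{lem:Btaux}, $\E_x[Y_{n+1}] = \tfrac12\E_x[\widetilde B_{\tau-}] = \tfrac12\big(x + \tfrac{2\pi Ai(x)}{3^{1/6}\Gamma(2/3)}\big)$. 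Since $Ai(x)\to 0$ as $x\to\infty$ and $Ai$ is bounded, this gives a drift estimate of Foster--Lyapunov type: for $x$ large, $\E_x[Y_{n+1}] \approx x/2 \le x - 1$, while on compact sets $\E_x[Y_{n+1}]$ is bounded. Hence $V(x) = x$ is a Lyapunov function with $\E_x[V(Y_1)] \le \beta V(x) + b$ for some $\beta<1$ and $b<\infty$.

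From the Lyapunov inequality, iterating gives $\sup_n \E_x[Y_n] < \infty$ and more precisely $\limsup_n \tfrac1n\sum_{k=1}^n \E_x[Y_k] < \infty$, so the Cesàro-averaged laws of $\mathcal Y$ are tight on $[0,\infty)$. One then extends tightness to the full triple: given that $Y_{k-1}$ lives (with high probability) in a compact set, $\eta_k = \tau$ has the exponential tail bound $\Pr_x(\tau > t) \le Ce^{-ct}$ uniformly in $x$ (Lemma~\ref{lem:taubound}), so the laws of $\eta_k$ are tight; and $W_k = R_{\tau_k}-R_{\tau_{k-1}} = \widetilde B_{\tau-} - Y_k \le \widetilde B_{\tau-}$, which has bounded expectation by \eqref{eq:48} when started from a tight family of initial points, so the $W$-marginals are tight as well. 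Therefore the Cesàro averages $\tfrac1m \sum_{k=1}^m P^k_x$ form a tight family of measures on $[0,\infty)^3$, and by Prokhorov there is a subsequence $(i_n)$ along which they converge weakly to some limit $P_x$.

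The remaining step is to verify that any such subsequential limit $P_x$ is invariant for the chain. Let $\Pi$ denote the transition kernel. For a bounded continuous test function $f$, one writes $\int f\, d(\tfrac1{i_n}\sum_{k=1}^{i_n} P^k_x \Pi) - \int f\, d(\tfrac1{i_n}\sum_{k=1}^{i_n} P^k_x) = \tfrac1{i_n}(P^{i_n+1}_x f - P^1_x f)$, which tends to $0$ since $f$ is bounded; so it remains to pass to the limit in $\int f\, d(\tfrac1{i_n}\sum_{k=1}^{i_n} P^k_x \Pi) \to \int f\, d(P_x\Pi)$, i.e.\ to show that $\int (\Pi f)\, d\mu_n \to \int (\Pi f)\, d P_x$ whenever $\mu_n \Rightarrow P_x$. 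This is exactly the statement that $\Pi$ has the weak Feller property — that $\Pi f$ is bounded and continuous whenever $f$ is. I expect this Feller continuity to be the main technical obstacle: it amounts to showing that the joint law of $(\widetilde B_{\tau-} - U,\, U,\, \tau)$ depends continuously on the starting point $x$ of the reflected Brownian motion. This should follow from a coupling argument — start two killed reflected Brownian motions from nearby points $x$ and $x'$ using the same driving Brownian increments and the same exponential clock $\xi$, so that they agree until the lower one hits $0$ and thereafter can be coupled to meet quickly — combined with the explicit continuous dependence of the Green function $G(x,y)$ (built from the Airy functions $\phi,\psi$) on $x$; continuity in $x$ of the killing-position density $y\mapsto y\,G(x,y)$ and of $\E_x[\tau]$ is manifest from the formulas in Lemma~\ref{lem:Btaux}. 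Given weak Feller plus the tightness established above, Krylov--Bogolyubov yields that $P_x$ is invariant, which is \eqref{eq:22}.
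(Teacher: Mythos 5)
Your proposal follows essentially the same route as the paper: tightness of $(P_x^n)_n$ via the drift bound on $\E_x[Y_n]$ coming from the $Ai$-formula in Lemma~\ref{lem:Btaux}, plus the uniform exponential tail on $\tau$ from Lemma~\ref{lem:taubound}, followed by Prokhorov and the Krylov--Bogolyubov observation that subsequential Cesàro limits are invariant. The paper's only streamlining is to note that $W_n \stackrel{d}{=} Y_n$ (since $R_{\tau_n}$ is uniform on $[R_{\tau_n-},X_{\tau_n-}]$), so the $W$-marginal needs no separate bound, and for the invariance of the subsequential limit it simply cites Liggett rather than spelling out the weak Feller verification you correctly identify as the ingredient needed in that last step.
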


\begin{proof}
  If the limit of a subsequence of $n^{-1} \sum_{k=0}^{n-1}P_x^n$
  exists then it must be an invariant distribution of
  $(Y_n,W_n,\eta_n)_{n \ge 1}$. A proof of this fact in the continuous
  time case, that can be easily adapted to the discrete time case, can
  be found in \citep[][p.~11]{Liggett:85}.  It remains to prove that
  $\Big(n^{-1} \sum_{k=0}^{n-1}P_x^n\Big)_{n=1,2,\dots}$ is a tight
  sequence. This follows immediately once we prove tightness of
  $(P_x^n)_{n=1,2,\dots}$. To this end it is enough to show that the
  first moments of $Y_n$, $W_n$ and $\eta_n$ are bounded uniformly in
  $n$.

  Boundedness of the first moments of $\eta_n$ follows from
  Lemma~\ref{lem:taubound}. For the boundedness of
  the first moments of $Y_1, Y_2,\dots$ and $W_1, W_2,\dots$, recall
  that $R_{\tau_n}$ is distributed uniformly on $[R_{\tau_n-};
  X_{\tau_n-}]$. Hence $Y_n$ and $W_n$ have the same distribution and
  it suffices to show boundedness of
  the first moment of $Y_1, Y_2,\dots$. For this, we recall from
  \eqref{eq:48} that
  \begin{align}\label{eq:27}
    \E_x\big[X_{\tau_1}\big] = \E_x[\widetilde B_{\tau-}] & = x +
    \frac{2\pi Ai(x)}{3^{1/6}\Gamma(2/3)} \leq x + c
  \end{align}
  for
  \[c=\frac{2\pi Ai(0)}{3^{1/6}\Gamma(2/3)},\] since $Ai$ is
  decreasing in $[0;\infty)$. Let $\FF_n=\sigma(X_{\tau_m},R_{\tau_m}:
  m \le n)$ and let $(U_n)$ be a sequence of iid random variables
  uniformly distributed on $(0,1)$ and independent of
  $(\mathcal{X},\mathcal{R})$. Using that sequence we have
  \begin{align*}
    X_{\tau_{n+1}}-R_{\tau_{n+1}} \stackrel{d}{=}  X_{\tau_{n+1}} - \big(
    R_{\tau_n} + (X_{\tau_{n+1}} - R_{\tau_n}) U_{n+1}\big) =
    (X_{\tau_{n+1}}-R_{\tau_n})(1-U_{n+1}). 
  \end{align*}
  and hence
  \begin{align*}
    \E_x\big[Y_{n+1}\big] & =
    \E_x\big[\E_x\big[(X_{\tau_{n+1}}-R_{\tau_{n}})(1-U_{n+1})|\FF_n\big]
    \big] \\ & = \frac12\E_x \big[\E_x
    \big[(X_{\tau_{n+1}}-R_{\tau_{n}})|\FF_n\big] \\ & = \frac 12
    \E_x\big[\E_{X_{\tau_n}}\big[X_{\tau_1} \big]-R_{\tau_n}\big]
    \\ & \leq \frac12 \E_x\big[Y_n+c\big] \\ & = \frac12
    \E_x\big[Y_n\big] + \frac12c
  \end{align*}
  where we have used \eqref{eq:27}. In other words, $\E_x[Y_n]\leq
  x+c$ for all $n=1,2,\dots$ which implies boundedness of the first
  moments of $Y_1, Y_2,\dots$.
\end{proof}

The following lemma shows uniqueness of the invariant distribution of
the Markov chain $(\mathcal Y, \mathcal W, \eta)$.

\begin{proposition} 
  \label{P:coupl}
  Let $(\mathcal Y, \mathcal W, \eta)$ be the Markov chain at jump
  times as introduced in Definition~\ref{def:mc}, based on a Brownian
  ratchet started in $x\geq 0$. If an invariant distribution of
  $(\mathcal Y, \mathcal W, \eta)$ exists, then it is unique.
\end{proposition}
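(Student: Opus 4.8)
The plan is to reduce the statement to uniqueness of the invariant distribution of the single‐coordinate chain $\mathcal Y=(Y_n)_{n\ge1}$, and then to prove the latter by a coupling (equivalently, drift‑plus‑minorization) argument built on the explicit one‑step mechanism derived in the proofs of Lemmas~\ref{lem:Btaux} and the proof of Proposition~\ref{P:exInv}.

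\emph{Reduction to $\mathcal Y$.} By Definition~\ref{def:mc}, $\mathcal Y$ is itself a time‑homogeneous Markov chain on $[0,\infty)$, and—as shown in the proof of Proposition~\ref{P:exInv}—the triple $(Y_{n+1},W_{n+1},\eta_{n+1})$ is a deterministic function of $Y_n$ and independent randomness: running the killed reflected Brownian motion $\widetilde{\mathcal B}$ from $Y_n$ produces $(\widetilde B_{\tau-},\tau)$, and with an independent uniform $U_{n+1}$ one has $(Y_{n+1},W_{n+1},\eta_{n+1})=(\widetilde B_{\tau-}(1-U_{n+1}),\,\widetilde B_{\tau-}U_{n+1},\,\tau)$. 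Hence if $P,P'$ are two invariant distributions of $(\mathcal Y,\mathcal W,\eta)$, their $Y$‑marginals $\mu,\mu'$ are invariant for $\mathcal Y$; once $\mu=\mu'$ is shown, $P$ and $P'$ are both the image of $\mu$ under the fixed kernel producing $(Y_{n+1},W_{n+1},\eta_{n+1})$ from $Y_n$, so $P=P'$. Thus it suffices to prove $\mathcal Y$ has a unique invariant law.

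\emph{Two ingredients for $\mathcal Y$.} First, from the Green‑function description in the proof of Lemma~\ref{lem:Btaux}, the law of $\widetilde B_{\tau-}$ started from $y$ has Lebesgue density $w\mapsto w\,G(y,w)$; combining with the factor $1-U\sim\mathrm{Unif}(0,1)$ gives that $Y_{n+1}$ has density $q_y(z)=\int_z^\infty G(y,w)\,dw$ on $(0,\infty)$. Since $G$ is continuous, strictly positive and integrable in its second argument (it decays like $Ai$), $q_y(z)$ is jointly continuous in $(y,z)$ and strictly positive; in particular, for any $R>0$ there are $0<a<b$ and $\varepsilon>0$ with $q_y(z)\ge\varepsilon$ for all $(y,z)\in[0,R]\times[a,b]$, i.e.\ the minorization $P(y,\cdot)\ge\varepsilon'\nu(\cdot)$ holds on $C:=[0,R]$ with $\nu$ uniform on $[a,b]$ and $\varepsilon'=\varepsilon(b-a)$. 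Second, \eqref{eq:48} yields the global drift $\E[Y_{n+1}\mid Y_n=y]=\tfrac12\E_y[\widetilde B_{\tau-}]\le\tfrac12 y+\tfrac12 c$ with $c=2\pi Ai(0)/(3^{1/6}\Gamma(2/3))$, so $V(y):=y$ satisfies $PV\le\tfrac12 V+\tfrac c2$ everywhere. Consequently $\E_y[Y_n]\le 2^{-n}y+c$, and for $R>2c$ the chain (and, with the product drift $V_2(y,y')=y+y'$, two \emph{independent} copies of it) hits $C$ (resp.\ $\{V_2\le R\}\subseteq C\times C$) infinitely often almost surely.

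\emph{Coupling and conclusion.} Run two independent copies $\mathcal Y,\mathcal Y'$ from arbitrary starting points until a time at which both lie in $C$; such times recur infinitely often a.s.\ by the previous paragraph. At each such simultaneous visit, use the split $P(y,\cdot)=\varepsilon'\nu+(1-\varepsilon')(\cdots)$ to attempt to send \emph{both} chains to the same $\nu$‑distributed value—this succeeds with probability $\ge\varepsilon'$—after which the two chains are driven by identical updates and coincide forever. Independence of the attempts and the uniform lower bound $\varepsilon'$ force coalescence a.s., hence $\norm{\mathrm{Law}(Y_n)-\mathrm{Law}(Y_n')}_{\mathrm{TV}}\to0$ for all initial conditions, which makes the invariant law of $\mathcal Y$ unique. (Equivalently, the drift and minorization established above are exactly the hypotheses of the standard geometric ergodicity theorem for Markov chains, Meyn--Tweedie, which gives uniqueness directly.) The \textbf{main obstacle} is the minorization ingredient: one must verify rigorously that the one‑step law of $\mathcal Y$ has density $q_y(z)=\int_z^\infty G(y,w)\,dw$ and that this is jointly continuous and positive, which rests on the explicit Green‑function computation behind Lemma~\ref{lem:Btaux}; a lesser point needing care is the simultaneous recurrence of two independent copies to $C$, which is handled via the product drift $V_2$ as above.
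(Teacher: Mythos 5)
Your argument is correct, but it takes a genuinely different route from the paper. The paper works with the continuous-time process via its graphical construction: it runs two ratchets off the same Poisson process $N^\gamma$ (and Brownian motions that coalesce), and shows that the two reflection boundaries $S_t^1,S_t^2$ eventually use the same Poisson point, each attempt succeeding with probability at least $\tfrac12$ by symmetry, so the full paths couple after a geometric number of trials. You instead work entirely with the embedded discrete-time chain: you first make the (correct, and worth stating explicitly) reduction that uniqueness for $(\mathcal Y,\mathcal W,\eta)$ follows from uniqueness for $\mathcal Y$ alone, since the triple at step $n+1$ is generated from $Y_n$ by a fixed kernel; you then extract from the Green-function computation in Lemma~\ref{lem:Btaux} the explicit one-step density $q_y(z)=\int_z^\infty G(y,w)\,dw$ of $\mathcal Y$, giving a Doeblin minorization on compact sets, and you reuse the drift estimate $\E[Y_{n+1}\mid Y_n=y]\le\tfrac12 y+\tfrac c2$ already present in the proof of Proposition~\ref{P:exInv}. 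Drift plus minorization is exactly the Meyn--Tweedie/Harris setting, so uniqueness (indeed geometric ergodicity of $\mathcal Y$) follows. The trade-off: the paper's coupling is self-contained and pathwise, and yields the stronger statement that the continuous-time processes themselves merge after a finite time, whereas your argument leans on the explicit transition density (which happens to be available here) and on standard Harris-chain machinery, in exchange for a more mechanical verification and quantitative convergence rates. The only points you should spell out in a full write-up are the ones you already flag: joint continuity and positivity of $q_y(z)$ (immediate from $\phi=Ai>0$, $\psi=Bi+\sqrt3\,Ai>0$ and dominated convergence), and the fact that the product drift forces two independent copies into $C\times C$ infinitely often so that the coupling attempts can be iterated.
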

\begin{proof}
  We use the graphical construction and a coupling argument for the
  proof. Let $N^\gamma$ be a Poisson process on $\mathbb R_+ \times
  \mathbb R$ with intensity measure $\gamma\cdot \lambda^2$. In
  addition, $\mathcal B^i=(B_t^i)_{t\geq 0}$, $i=1,2$ are two Brownian
  motions, started in $x_1\leq x_2$, such that $\mathcal B^1$ and
  $\mathcal B^2$ are independent before $T:= \inf\{s\geq 0: B_s^1 =
  B_s^2\}$ and $B_t^1=B_t^2$ for $t\geq T$. Moreover, $({\mathcal
    X}^{i}, {\mathcal R}^{i})$ is the Brownian ratchet read off from
  $(\mathcal B^i, N^\gamma)$, $i=1,2$.  Let $\tau_n^i$ denote the jump
  times of ${\mathcal R}^i$, $i=1,2$ and set
  \begin{align*}
    Y_{n}^i = X_{\tau_n^i} - R_{\tau_n^i}, \qquad W_n^i =
    R_{\tau_n^i} - R_{\tau_{n-1}^i} \qquad \eta_n^i= \tau_n^i-
    \tau_{n-1}^i
  \end{align*}
  for $i=1,2$. In order to show uniqueness of the invariant
  distribution of $(\mathcal Y, \mathcal W, \eta)$ it suffices to show
  that there is an almost surely finite stopping time $T'$ such that
  $(X_t^1-X_T^1, R_t^1-R_T^1)_{t\geq T'} = (X_t^2-X_T^2,
  R_t^2-R_T^2)_{t\geq T'}$, almost surely.

  We denote by $S_t^i$ the reflection point of $\mathcal B^i$ in the
  graphical construction, $i=1,2$, i.e., given $t\in[\tau_n^i;
  \tau_{n+1}^i), S_t^i := S_n^i$ where $S_1^i, S_2^i,\dots$,
  $\tau_1^i, \tau_2^i,\dots$ are as in \eqref{eq:tildetau} and
  \eqref{eq:Sn} for $i=1,2$. Since the hitting time of $\mathcal B^1$
  and $\mathcal B^2$ is almost surely finite, it is enough to
  construct the coupling using only one Brownian motion $B_t$ starting
  in $x=x_1=x_2$. So, assuming $B_0^1 = B_0^2$, we need to show that
  $T:=\inf\{t: R_t^1=R_t^2\}$ is almost surely finite. We set
  $s_i:=S_0^i$, $i=1,2$ and (without loss of generality) $s_1 \geq
  s_2$. In the case $s_1=s_2$ we have $T=0$ and we are
  done. Otherwise, there are three possibilities:
  \begin{align*}
    x \leq s_2< s_1, \quad s_2 < x < s_1,\quad s_2< s_1 \leq x,
  \end{align*}
  where the first and the last are symmetric. As the Brownian motion
  is recurrent and the reflection boundaries tend to jump towards the
  Brownian motion, it is clear that the time until the Brownian motion
  hits one of the moving boundaries is a.s.\ finite. Thus, it suffices
  to consider the cases
  \begin{align}\label{eq:twocases}
    x= s_2 \le s_1\quad \text{and} \quad s_2\le x = s_1.
  \end{align}
  Given $t\in[\tau^i_n;\tau^i_{n+1})$, recall that $S_t^i$ jumps at
  $\tau_{n+1}^i$ to the next point in $N^\gamma$ between $S_t^i$ and
  $B_t^i$, $i=1,2$. Hence, the processes couple if and only if both
  components of $(B_t,S_t^1)$ and $(B_t,S_t^2)$ use the same point of
  the Poisson process $N^\gamma$.
  This happens if the Poisson point, say $N(t,y)$, that is used in the
  construction satisfies
  \begin{align} \label{eq:6} B_t< N(t,y)< S_t^2\le S_t^1\quad
    \text{or} \quad S_t^2\le S_t^1< N(t,y) < B_t.
  \end{align}
  Consider the two cases of \eqref{eq:twocases}. In the first case we
  consider the (almost surely finite time of the) first jump of
  $S_t^2$ and in the second the (almost surely finite time of the)
  first jump of $S_t^1$. Due to the symmetry of the Brownian motion
  and the homogeneity of the Poisson process, this point is below $x$
  in the first case and above $x$ in the second case, with probability
  at least $\tfrac 12$. In other words, \eqref{eq:6} is satisfied at
  that jump time with probability at least $\tfrac 12$. If coupling
  did not occur, we wait an almost surely finite amount of time until
  either of \eqref{eq:twocases} is fulfilled again. Of course it is
  possible that one or both boundaries jump in-between. But at each
  jump time the boundaries get closer together. Then the processes try
  to couple again upon the next jump times of either $S_t^1$ or
  $S_t^2$ with success probability larger or equal $\tfrac 12$ and so
  on. Thus, the coupling time is bounded by a geometric number of
  finite times and hence is almost surely finite.
\end{proof}

\subsection{Increments under the invariant distribution}
\label{P:Btaupi}
Propositions~\ref{P:exInv} and \ref{P:coupl} guarantee existence and
uniqueness of the invariant distribution of the Brownian ratchet at
jump times. Next, we derive the increment of the Brownian ratchet and
waiting times between jumps in equilibrium.

\begin{proposition}
  \label{P:incrInv}
  Let $\pi$ be the invariant distribution for $Y_1, Y_2,\dots$. Start
  the Brownian ratchet in $\pi$ and denote the integral with respect
  to the resulting distribution by $\E_\pi$. Then,
  \begin{align}\label{eq:27a}
    \E_\pi[Y_1] =-3Ai'(0)
  \end{align}
  and
  \begin{align}\label{eq:27b}
    \E_\pi[\tau_1] =  6Ai(0).
  \end{align}
\end{proposition}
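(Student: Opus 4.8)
The plan is to compute the two expectations by integrating the single‑jump quantities from Lemma~\ref{lem:Btaux} against the invariant law $\pi$, exploiting the key identity already used in the proof of Proposition~\ref{P:exInv}: if the chain is in equilibrium and $Y$ has law $\pi$, then after one step $Y_{\text{new}} \stackrel{d}{=} (X_{\tau_1}-R_0)(1-U)$ with $U$ uniform on $(0,1)$ independent of everything, and starting from $Y=x$ we have $\E_x[X_{\tau_1}-R_0]=\E_x[\widetilde B_{\tau-}]-x = \tfrac{2\pi Ai(x)}{3^{1/6}\Gamma(2/3)}$ from \eqref{eq:48}. Taking expectations over $\pi$ and using invariance ($\E_\pi[Y_{\text{new}}]=\E_\pi[Y_1]$) together with $\E[1-U]=\tfrac12$ gives
\[
\E_\pi[Y_1] \;=\; \tfrac12\,\E_\pi\!\big[X_{\tau_1}-R_0\big]
\;=\; \tfrac12\Big(\E_\pi[Y_1] + \tfrac{2\pi}{3^{1/6}\Gamma(2/3)}\,\E_\pi[Ai(Y_1)]\Big),
\]
so that $\E_\pi[Y_1] = \tfrac{2\pi}{3^{1/6}\Gamma(2/3)}\,\E_\pi[Ai(Y_1)]$. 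Thus the whole problem reduces to identifying $\E_\pi[Ai(Y_1)]$.

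To pin down $\E_\pi[Ai(Y_1)]$ I would look for a second invariance relation, this time testing the stationarity of the chain against the function $Ai$ itself rather than against the identity. Concretely, one needs the one‑step expectation $\E_x[Ai(Y_{\text{new}})] = \E_x\big[Ai\big((X_{\tau_1}-R_0)(1-U)\big)\big] = \int_0^1 \E_x[Ai((X_{\tau_1}-R_0)s)]\,ds$, which can be evaluated using the Green‑function representation of the killing position from the proof of Lemma~\ref{lem:Btaux}: the killing position $\widetilde B_{\tau-}$ started from $x$ has density $y\mapsto y\,G(x,y)$, and $X_{\tau_1}-R_0 = \widetilde B_{\tau-}$ in distribution. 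Integrating $Ai(ys)$ against $y\,G(x,y)\,dy\,ds$ and using the differential equation $Ai''=xAi$ to carry out the $y$‑integrals (exactly the antiderivative trick $\int u^2 Ai(u)\,du = uAi'(u)-Ai(u)$ already exploited in Lemma~\ref{lem:Btaux}) should produce a closed form in terms of $Ai(x)$, $Ai'(x)$ and constants. Setting the resulting expression's $\pi$‑average equal to $\E_\pi[Ai(Y_1)]$ yields a linear equation whose solution, after inserting the values \eqref{eq:54}--\eqref{eq:56} of $Ai(0)$, $Ai'(0)$ and the Wronskian $w=1/\pi$, should give $\E_\pi[Ai(Y_1)] = \tfrac{3^{1/6}\Gamma(2/3)}{2\pi}\cdot(-3Ai'(0))$, hence \eqref{eq:27a}. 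An alternative, possibly cleaner route is to guess that the stationary density of $Y_1$ is proportional to $Ai(y)$ on $[0,\infty)$ (which integrates to $1/3$ by \eqref{eq:59}, so the density is $3\,Ai(y)$), verify invariance directly, and then simply compute $\E_\pi[Y_1]=3\int_0^\infty y\,Ai(y)\,dy = 3[\,yAi'(y)-Ai(y)\,]_0^\infty = -3Ai'(0)$ using $Ai,Ai'\to 0$ at $+\infty$; I expect this is in fact how the authors proceed, and verifying the invariance of $3Ai(y)\,dy$ is the real content.

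For \eqref{eq:27b} I would then combine \eqref{eq:49} with the same stationary density: $\E_\pi[\tau_1] = \int_0^\infty \E_x[\tau]\,\pi(dx) = \int_0^\infty 2\pi\big(Gi(x)+3^{-1/2}Ai(x)\big)\cdot 3\,Ai(x)\,dx = 6\pi\int_0^\infty Ai(x)Gi(x)\,dx + 6\pi\,3^{-1/2}\int_0^\infty Ai(x)^2\,dx$. Both integrals are classical: using $Gi'' - xGi = -1/\pi$ and $Ai''-xAi=0$ one gets $(Ai\,Gi' - Ai'\,Gi)' = -Ai/\pi$, and integrating together with the asymptotics \eqref{eq:57} and the values at $0$ reduces everything to $Ai(0)$, $Ai'(0)$, $Bi(0)$, $\int_0^\infty Ai = 1/3$ and $\int_0^\infty Ai^2$; after simplification this should collapse to $6\,Ai(0)$. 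The main obstacle I anticipate is precisely establishing that $\pi$ has density $3\,Ai$ (equivalently, nailing $\E_\pi[Ai(Y_1)]$): one must check the fixed‑point equation $\pi = \pi P$ for the $Y$‑chain, i.e. that $3\int_0^\infty Ai(x)\int_0^1 f_{y\mapsto ys}\big(\text{dist of }\widetilde B_{\tau-}\text{ from }x\big)\,ds\,dx$ reproduces $3\,Ai$, which is the one place where a genuine (if short) Airy‑function computation with the Green function is unavoidable; everything after that is bookkeeping with the identities collected in Remark~\ref{rem:airy}.
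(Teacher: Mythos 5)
Your second (``alternative'') route is essentially the paper's proof, and you are right that identifying the stationary density of $Y$ as $3\,Ai$ is the real content; once that is in hand, your computations $\E_\pi[Y_1]=3\int_0^\infty yAi(y)\,dy=-3Ai'(0)$ and $\E_\pi[\tau_1]=\int\E_x[\tau]\,\pi(dx)$ via \eqref{eq:49} are exactly what the paper does. The one substantive difference is how the density is pinned down: rather than guessing $3Ai$ and verifying the fixed-point equation $\pi=\pi P$ (which would force you to evaluate $\int_0^\infty Ai(x)\int_z^\infty G(x,u)\,du\,dx$ head-on), the paper writes the fixed-point identity $Y\stackrel{d}{=}\widetilde B^Y_{\tau-}U$ as an integral equation for $f_Y$ in terms of the Green function, differentiates it three times to obtain $f_Y'''=(zf_Y)'$, integrates once, identifies the constant of integration as $0$ from $\psi'(0)=0$, and then uses monotonicity of $f_Y$ to select the $Ai$ solution. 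That derivation is worth preferring over guess-and-verify because it also \emph{produces} the candidate rather than presupposing it. Your first route (closing a system of moment identities) is correctly set up --- the relation $\E_\pi[Y_1]=\tfrac{2\pi}{3^{1/6}\Gamma(2/3)}\E_\pi[Ai(Y_1)]$ is consistent with the answer --- but testing stationarity against $Ai$ introduces yet further functionals of $\pi$ (integrals of $Ai$ composed with the kernel), and it is not clear the system closes; I would not rely on it. Finally, a small but genuine slip in your sketch of \eqref{eq:27b}: the identity $(Ai\,Gi'-Ai'\,Gi)'=-Ai/\pi$ only yields the boundary relation $Ai(0)Gi'(0)-Ai'(0)Gi(0)=\tfrac{1}{3\pi}$ and does not produce $\int_0^\infty Ai\,Gi\,dx$; you need instead the antiderivative $\tfrac{d}{dx}\bigl(Ai'Gi'-xAi\,Gi\bigr)=-Ai\,Gi-Ai'/\pi$ (or the paper's route through the integral representation \eqref{eq:58} of $Gi$ and Fubini) to reduce that integral to boundary values.
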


\begin{proof}
  Let $Y$ be distributed according to $\pi$.  Let a Brownian ratchet
  start in $X_0=Y, R_0=0$. Since $Y$ is in equilibrium, we find that
  the distribution of a Brownian ratchet at the first jump time is
  distributed like $Y$. To be more precise, let $\widetilde{\mathcal
    B}^Y = (\widetilde B_t^Y)_{t\geq 0}$ be a reflected Brownian
  motion, killed at rate $\tfrac 12 |B|$, started in $Y$ with
  increments independent of $Y$. Furthermore $\tau$ is the killing
  time of $\widetilde{\mathcal B}^Y$ as in
  Definition~\ref{def:killedBrownian}. Then, 
  \begin{align} \label{eq:3} Y \stackrel{d}{=}\widetilde B^Y_{\tau-}U,
  \end{align}
  where $U$ is independent and uniformly distributed on $(0,1)$, since
  the ratchet jumps to a uniformly distributed point in $[0;\widetilde
  B^Y_{\tau-}]$.

  We denote by $f_H$ the density of a random variable $H$. The
  requirement \eqref{eq:3} implies
  \begin{align*}
    f_Y(z) & =f_{B^Y_{\tau}U}(z) = \int_0^\infty f_Y(x)
    \int_0^\infty f_{B^x_{\tau}}(u)f_U\Big(\frac{z}u\Big)\frac1u\,du \,dx \\
    & = \int_0^\infty f_Y(x) \int_z^\infty
    f_{B^x_{\tau}}(u)\frac1u\,du\,dx
    \\ & =\int_0^\infty f_Y(x) \int_z^\infty G(x,u)\,du\,dx \\
    & = \frac{1}{w} \int_0^\infty f_Y(x)\Big[ \ind{z<x}\Big(
    \int_z^x\phi(x)\psi(u)\,du +
    \int_x^\infty\psi(x)\phi(u)\,du\Big) \\
    & \qquad \qquad \qquad \qquad \qquad \qquad \qquad \qquad \qquad +
    \ind{z\ge x} \int_z^\infty
    \psi(x)\phi(u)\,du\Big]\,dx\\
    & = \frac{1}{w} \Big[\int_z^\infty f_Y(x)\phi(x)
    \int_z^x\psi(u)\,du\,dx + \int_z^\infty f_Y(x)\psi(x)
    \int_x^\infty\phi(u)\,du \,dx  \\
    & \qquad \qquad + \int_0^z f_Y(x) \psi(x) \int_z^\infty
    \phi(u)\,du\,dx\Big] \\ &=: \frac1w[I_1(z) + I_2(z) + I_3(z)].  
  \end{align*}
  We differentiate with respect to $z$ and obtain
  \begin{align*} 
    I_1'(z) & = -\psi(z) \int_z^\infty f_Y(x)\phi(x) \, dx, \\
    I_2'(z) & = f_Y(z)\psi(z)\int_z^\infty \phi(u) \, du,  \\
    I_3'(z) & = -\phi(z)\int_0^zf_Y(x)\psi(x) \, dx -f_Y(z) \psi(z)\int_z^\infty
    \phi(u) \, du. 
  \end{align*}
  Thus,  
  \begin{align} \label{eq:10} f_Y'(z) = -\frac{1}{w} \Big[ \psi(z)
    \int_z^\infty f_Y(x)\phi(x)\, dx + \phi(z)
    \int_0^zf_Y(x)\psi(x)\,dx\Big].
  \end{align}
  At this point we see that $f_Y$ is strictly decreasing on $[0,\infty)$ since
  $\phi$ and $\psi$ are positive. We differentiate this equation two more
  times and obtain  
  \begin{align}
    \label{eq:11}
    \begin{split}
      f_Y''(z) & = -\frac{1}{w} \Big[\psi'(z) \int_z^\infty
      f_Y(x)\phi(x)\, dx - \psi(z)f_Y(z)\phi(z) \\ & \qquad \quad
      \qquad \quad \qquad \quad + \phi'(z) \int_0^z f_Y(x) \psi(x)\,dx
      +
      \psi(z)f_Y(z)\phi(z)\Big] \\
      & = -\frac{1}{w} \Big[\psi'(z) \int_z^\infty f_Y(x)\phi(x)\, dx
      + \phi'(z) \int_0^z f_Y(x) \psi(x)\,dx\Big]
    \end{split} \\ \intertext{and}
    \label{eq:13}
    \begin{split}
      f_Y'''(z) & = -\frac{1}{w} \Big[ \psi''(z) \int_z^\infty
      f_Y(x)\phi(x)\, dx -\psi'(z) f_Y(z)\phi(z) \\ & \qquad \quad
      \qquad \quad \qquad \quad +\phi''(z)
      \int_0^zf_Y(x)\psi(x)\,dx+\phi'(z)f_Y(z)\psi(z)\Big] \\ & =
      -\frac{1}{w} \Big[ z \psi(z) \int_z^\infty f_Y(x)\phi(x)\, dx +
      z\phi(z) \int_0^zf_Y(x)\psi(x)\,dx \\ & \qquad \quad \qquad
      \quad \qquad \quad - f_Y(z)(\psi'(z) \phi(z) - \phi'(z)\psi(z))
      \Big] \\ & = z f_Y'(z) + \frac{1}{w} f_Y(z)w = (z f_Y(z))'.
    \end{split}
  \end{align}
  Here we have used \eqref{eq:10} for the next to last
  equality. Integrating the last equation we get
  \begin{align}
    \label{eq:14}
    f_Y''(z)= z f_Y(z) + c
  \end{align}
  for some constant $c$. Using $\psi'(0) = 0$, which holds by the
  definition of $\psi$, and \eqref{eq:11} we obtain $c=f_Y''(0) =
  0$. From \eqref{eq:10} we know that $f_Y$ is decreasing. The unique
  (up to a constant factor) decreasing non-negative solution of
  \eqref{eq:14} with $c=0$ is given by $\phi=Ai$.  Thus, using
  \eqref{eq:59}, we have
  \begin{align}
    \label{eq:15}
    f_Y(z) = 3 \phi(z) = 3 Ai(z), z \ge 0.
  \end{align}
  The expectation of $Y$ is given by
  \begin{align}
    \label{eq:16}
    \E_{\pi}\big[Y_1\big] & = \int y \pi(dy) = \int y f_Y(y) dy = 3
    \int_0^\infty y Ai(y) \, dy = 3 \int_0^\infty Ai''(y)\, dy = - 3
    Ai'(0). 
  \end{align}
  Using \eqref{eq:49} we have
  \begin{align}\label{eq:17}
    \E_\pi[\tau_1] = \E[\E_Y[\tau]\big] = 2\pi \int_0^\infty f_Y(x)
    (Gi(x)+3^{-1/2}Ai(x))\,dx.
  \end{align}
  For evaluating the last integral we need
  \begin{align*}
    & \int_0^\infty Ai^2(x) \, dx = \Big[ x
    Ai^2(x)-(Ai'(x))^2\Big]_0^\infty = (Ai'(0))^2,  \\
    & \int_0^\infty Ai^2(x)\int_0^x Bi(u)\,du \,dx = \int_0^\infty
    Bi(u) \int_u^\infty Ai^2(x)\,dx du \\ & \qquad \qquad =
    \int_0^\infty Bi(u) \Big[ x Ai^2(x)-(Ai'(x))^2\Big]_u^\infty
    \,du \\ & \qquad
    \qquad = \int_0^\infty - u Ai^2(u) Bi(u) + (Ai'(u))^2 Bi(u) \, du,\\
    & \int_0^\infty Ai(x)Bi(x)\int_x^\infty Ai(u)\,du \,dx =
    \int_0^\infty Ai(u) \int_0^u Ai(x)Bi(x)\,dx du \\ & \qquad \qquad
    = \int_0^\infty Ai(u) \Big[ x Ai(x)Bi(x)-Ai'(x)Bi'(x)\Big]_0^u
    \,dx \\ & \qquad \qquad = \int_0^\infty u Ai^2(u) Bi(u) - Ai(u)
    Ai'(u) Bi'(u) + Ai(u) Ai'(0) Bi'(0) \, du.
  \end{align*}
  Plugging \eqref{eq:15} into \eqref{eq:17} and combining the last
  equations, we obtain, using \eqref{eq:55}, \eqref{eq:59} and the
  fact that the Wronskian \eqref{eq:54a} does not depend on $x$,
  \begin{align*}
    \E_\pi[\tau_1] & = 6\pi \Big( \int_0^\infty Ai(x) Gi(x)\, dx +
    3^{-1/2} \int_0^\infty (Ai(x))^2\, dx\Big) \\ & = 6\pi\Big(
    \int_0^\infty Ai'(u) (Ai'(u) Bi(u)-Bi'(u) Ai(u) ) \, du \\ & 
    \qquad \qquad \qquad \qquad 
    \qquad + Ai'(0)
    Bi'(0) \int_0^\infty Ai(u)\, du  + 3^{-1/2} (Ai'(0))^2\Big) \\ & = 6\pi\Big( - w
    \int_0^\infty Ai'(u)\, du + \frac13Ai'(0) Bi'(0) - \frac 13
    Ai'(0)Bi'(0)\Big) \\ & = 6 Ai(0).
  \end{align*}
\end{proof}

\section{Regeneration structure}
\label{sec:regen-struct}
In this section we show that the Brownian ratchet has a renewal
structure and can be seen as a cumulative process with a remainder
term (see Definition~\ref{def:cumu1}). In order to use this structure
we will have to bound moments of several quantities, i.e.\ of the time
between renewal points, (Proposition~\ref{P:mombounds1}), and the
increment of the Brownian ratchet at renewal points
(Proposition~\ref{P:mombounds2}).  As in the last section, we fix
$\gamma=\tfrac 12$.

\subsection{The Brownian ratchet as a cumulative process}

\begin{remark}[Cumulative processes]
  \label{rem:cumu1}
  We recall a definition of cumulative processes from
  \citet{Roginsky:1994}. Let $(T_n,V_n)_{n \ge 1}$ be a sequence of
  bivariate iid random variables with $T_1>0$ almost surely. The times
  $T_1, T_2,\dots$ are called regeneration times. Define a renewal
  process $(S_{M_t})_{t \ge 0}$ by
  \begin{align*}
    M_t = \min\Big\{n:\sum_{i=1}^n T_i > t\Big\} \quad \text{ and
    }\quad S_n=\sum_{i=1}^{n} V_i.
  \end{align*}
  The process $(S_{M_t})_{t\geq 0}$ is called a type A cumulative
  process. Cumulative processes are well studied \citep[see e.g.][and
  references therein]{Smith:1955,Roginsky:1994}. Most importantly, it
  is known that finite second moments of $T_1$ and $V_1$ are
  sufficient for the strong Law of Large Numbers and the Central Limit
  Theorem for $(S_{M_t})_{t \ge 0}$.
\end{remark}

\begin{definition}[The Brownian ratchet as a cumulative process]
  \label{def:cumu1}
  Given a Brownian ratchet $(\mathcal X, \mathcal R) = (X_t,
  R_t)_{t\geq 0}$ with $(X_0,R_0)=(x,0)$, $x \ge 0$, we define a
  sequence of regeneration times using times when $X_t=R_t$ as well as
  jump times of $\mathcal R$. We set
  \begin{align}\label{eq:501}
    \rho_0 & :=\inf\{t \ge 0: X_t = R_t\}, \\
    \tilde\rho_0 & := \inf\{t > \rho_0: R_t \ne R_{t-}\}, \notag\\
    \intertext{and for $n =1,2,\dots$, using $R_{t-}= \lim_{s \to t,
        s<t} R_s$}\label{eq:502}
    \rho_{n} & := \inf\{ t > \tilde\rho_{n-1}:X_t=R_t\}, \\
    \tilde\rho_{n} & := \inf\{t > \rho_{n}: R_t \ne R_{t-}\}\notag
  \end{align}
  such that $\rho_0\leq \tilde\rho_0\leq\rho_1\leq\tilde\rho_1\leq
  \dots$. Note that if $x=0$, then $\rho_0=0$ and $X_{\rho_0}=0$.  It is 
  clear that $(\rho_n-\rho_{n-1}, X_{\rho_n}-X_{\rho_{n-1}})_{n\ge 1}$
  is a sequence of bivariate iid random variables. We define
  \begin{align*} 
    M_t & :=\min\{n:\rho_n > t\}, \\
    S_n & :=\sum_{i=1}^{n}(X_{\rho_i} - X_{\rho_{i-1}}),\\
    A_t & := X_{\rho_0} + X_t - X_{\rho_{M_t}}.
  \end{align*}
  Then we have
  \begin{align} \label{eq:51} X_t = S_{M_t} + A_t,
  \end{align}
  that is, $\mathcal X$ is a type A cumulative process with remainder
  $A_t$.
\end{definition}

\begin{remark}[The regenerative structure in the proof of
  Theorem~\ref{T:LLN} and \ref{T:CLT}] As mentioned in
  Remark~\ref{rem:cumu1}, finite second moments of $\rho_1-\rho_0$ and
  $X_{\rho_1}-X_{\rho_0}$ are sufficient for the strong Law of Large
  Numbers and the Central Limit Theorem for $S_{M_t}$. To see that the
  same holds for $\mathcal X$, showing Theorems~\ref{T:LLN} and
  \ref{T:CLT}, we shall show in Proposition~\ref{prop:AsympA} that the
  remainder divided by $\sqrt{t}$ converges to $0$ almost surely as
  $t\to\infty$.
\end{remark}

\subsection{Tail estimates and moment bounds}
In the rest of the section, we denote the law of a Brownian ratchet
$(\mathcal X, \mathcal R)$, started in $(x,0)$ by $\Pr_x$ (with
expectation operator $\E_x$). We use the notation of
Definition~\ref{def:cumu1}. 
We can use the Law of Large Numbers and the Central Limit Theorem for
cumulative processes given $\rho_1-\rho_0$ and $X_{\rho_1}-X_{\rho_0}$
have a finite second moment. These are the main results of this section
(Proposition~\ref{P:mombounds1} and \ref{P:mombounds2}).

\begin{proposition}[Moment bounds for $\rho_1-\rho_0$]
  \label{P:mombounds1}
  Let $\rho_0, \rho_1$ be as in \eqref{eq:501} and
  \eqref{eq:502}. Then,
  \begin{align}
    \E_x[(\rho_1-\rho_0)^2] < \infty.
  \end{align}
\end{proposition}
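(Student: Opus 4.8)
The plan is to show that $\rho_1-\rho_0$ has exponentially bounded tails, which immediately gives finiteness of all moments, in particular the second. Recall the structure: starting from $\tilde\rho_0$ (a jump time of $\mathcal R$, so $X_{\tilde\rho_0} > R_{\tilde\rho_0}$ in general), we run the ratchet until $\rho_1$, the first time $X_t = R_t$, and then until $\tilde\rho_1$, the first subsequent jump of $\mathcal R$. So $\rho_1 - \rho_0 = (\tilde\rho_0 - \rho_0) + (\rho_1 - \tilde\rho_0)$. Wait — more precisely $\rho_1-\rho_0 = (\tilde\rho_0-\rho_0)+(\rho_1-\tilde\rho_0)$; the first piece $\tilde\rho_0-\rho_0$ is the waiting time for the \emph{first} jump of $\mathcal R$ starting from the state $X=R$ (i.e. starting the killed Brownian motion from $0$), and by Lemma~\ref{lem:taubound} this has an exponential tail. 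The second piece $\rho_1-\tilde\rho_0$ is the time for the reflected-Brownian-motion-with-jumping-boundary to bring $X_t$ down to $R_t$ again.

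For the second piece, the main point is that $X_t - R_t$ behaves like reflected Brownian motion (which hits $0$ in finite time with all moments finite, on any fixed time horizon) but is only helped, never hindered, by the jumps of $\mathcal R$: each jump of $\mathcal R$ strictly decreases $X_t - R_t$. So I would compare $X_t - R_t$ with a free reflected Brownian motion $|\beta_t|$ started from $X_{\tilde\rho_0}-R_{\tilde\rho_0} = |B_{\tilde\rho_0}-S_0|$ in the graphical construction: between consecutive jumps $|B_t - S_n|$ is exactly reflected (in the sense of $|B_t - S_n|$, the absolute value of a Brownian motion) around $S_n$, and at a jump the gap only shrinks. Hence $\rho_1 - \tilde\rho_0$ is stochastically dominated by the first hitting time of $0$ by a reflected Brownian motion started from the (random) height $Y := X_{\tilde\rho_0}-R_{\tilde\rho_0}$. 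Conditionally on $Y=y$, that hitting time $\sigma^{\mathrm{hit}}_0$ satisfies $\Pr(\sigma^{\mathrm{hit}}_0 > t \mid Y=y) \le \Pr(\sup_{[0,t]} \beta \text{ stays}< y)$, which by reflection is bounded by, say, $2\,\overline\Phi(y/\sqrt t)$ and in particular decays like $Ce^{-t/(cy^2)}$... but that is not uniform in $y$, so I also need good control on $Y$.

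So the structure of the argument is: (1) condition on $Y = X_{\tilde\rho_0}-R_{\tilde\rho_0}$; (2) bound the tail of $\rho_1-\tilde\rho_0$ given $Y$ by a reflected-Brownian hitting time of $0$ from height $Y$, using the graphical construction and the monotonicity ``jumps only help''; (3) bound the tail of $Y$ itself. For step (3): $Y = X_{\tilde\rho_0} - R_{\tilde\rho_0} = B_{\tilde\rho_0}\wedge S_0$-type quantity; since at $\tilde\rho_0$ the new reflection point $S_1$ is uniform on the interval between $S_0$ and $B_{\tilde\rho_0}$, we have $Y = X_{\tilde\rho_0}-R_{\tilde\rho_0} = |B_{\tilde\rho_0}-S_0|\cdot(1-U)$ for a uniform $U$; and $|B_{\tilde\rho_0}-S_0| = \widetilde B_{\tau-}$ in the notation of Definition~\ref{def:killedBrownian} started from $0$, whose law we can bound — e.g. via $\E_0[\widetilde B_{\tau-}]<\infty$ from \eqref{eq:48}, or more strongly an exponential-moment bound on $\widetilde B_{\tau-}$ obtained by combining the fact that over a short time the reflected BM cannot move far (Gaussian tail) with the exponential tail on $\tau$ from Lemma~\ref{lem:taubound}. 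Concretely, $\widetilde B_{\tau-} \le \sup_{s\le \tau}|B_s|$, and $\Pr_0(\sup_{s\le t}|B_s| \ge \lambda) \le 4e^{-\lambda^2/(2t)}$, so $\Pr_0(\widetilde B_{\tau-}\ge \lambda) \le \Pr(\tau > t) + 4e^{-\lambda^2/(2t)}$; optimizing over $t$ (with $t\asymp \lambda^{2/3}$ using Lemma~\ref{lem:taubound}) gives a stretched-exponential tail for $Y$, which is more than enough.

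Putting it together: $\Pr_x(\rho_1-\tilde\rho_0 > t)$ is bounded by $\E[\,2\overline\Phi(Y/\sqrt t)\,]$, and splitting on $\{Y \le t^{1/2-\delta}\}$ versus its complement, the first event contributes $\Pr(Y \le t^{1/2-\delta}) \cdot$ (something) — hmm, that is the wrong direction; I actually want small $Y$ to be \emph{rare}... no: small $Y$ means $X-R$ is already near $0$, which makes $\rho_1$ come \emph{sooner}, so small $Y$ is fine; it is \emph{large} $Y$ that is the danger, and that is exactly what the stretched-exponential tail on $Y$ controls. So: on $\{Y \le t^{1/3}\}$, $\Pr(\sigma^{\mathrm{hit}}_0 > t\mid Y) \le 2\overline\Phi(t^{-1/6}) \to 0$ at a stretched-exponential rate $\le Ce^{-ct^{1/3}}$; on $\{Y > t^{1/3}\}$, just bound the conditional probability by $1$ and use $\Pr(Y > t^{1/3}) \le Ce^{-ct^{c'}}$. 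Either way $\Pr_x(\rho_1 - \tilde\rho_0 > t) \le Ce^{-ct^{\alpha}}$ for some $\alpha>0$, and combined with the exponential tail on $\tilde\rho_0-\rho_0$ (Lemma~\ref{lem:taubound}, noting $X_{\rho_0}=R_{\rho_0}$ so we restart the killed BM from $0$) we get $\Pr_x(\rho_1-\rho_0 > t) \le Ce^{-ct^\alpha}$, hence $\E_x[(\rho_1-\rho_0)^2]<\infty$ — in fact all moments are finite.

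The main obstacle is step (2): making precise, via the graphical construction, the stochastic domination of $\rho_1 - \tilde\rho_0$ by the hitting time of $0$ of a single reflected Brownian motion started from the current gap $Y$ — i.e. that the jumps of $\mathcal R$ only ever decrease $X_t-R_t$ and therefore only speed up the arrival at $\{X=R\}$. This is intuitively obvious from Definition~\ref{def:graph} (between jumps $X_t - R_t = |B_t - S_n|$, and at a jump $S_n \mapsto S_{n+1}$ lies between $S_n$ and $B_{\tilde\tau_{n+1}}$, so $|B_{\tilde\tau_{n+1}} - S_{n+1}| \le |B_{\tilde\tau_{n+1}} - S_n|$), but writing the coupling cleanly — e.g. showing $X_t - R_t \le |B'_t|$ for a reflected BM $\beta$ driven by the same underlying increments and started at $Y$, so that $\rho_1 \le$ (first zero of $\beta$ after $\tilde\rho_0$) — requires a little care because after a jump the ``center'' of the reflection changes. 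A clean way is to note it suffices to use, piece by piece between the jump times $\tilde\tau_n$, the elementary fact that if $\widehat{B}$ is a BM and $|a|\le|b|$ then the first time $|\widehat B_t - c|$ hits $0$ started appropriately is stochastically smaller from the closer starting gap — and then concatenate, using that each jump can only reduce the gap. Everything else is routine Gaussian tail estimates plus the already-established Lemma~\ref{lem:taubound}.
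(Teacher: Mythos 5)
Your decomposition $\rho_1-\rho_0=(\tilde\rho_0-\rho_0)+(\rho_1-\tilde\rho_0)$ and your treatment of the first piece via Lemma~\ref{lem:taubound} match the paper exactly. The gap is in your step (2): the random variable you dominate $\rho_1-\tilde\rho_0$ by --- the first hitting time of $0$ by a plain reflected Brownian motion started at height $Y$, with all jumps of $\mathcal R$ discarded --- has \emph{infinite mean}, so the domination, although valid (your intermediate-target/continuity argument does go through), cannot yield a second or even a first moment bound. Concretely, given $Y=y$ the event $\{\sigma^{\mathrm{hit}}_0>t\}$ is $\{\sup_{s\le t}\beta_s<y\}$, whose probability is $2\Phi(y/\sqrt t)-1=\Pr\big(|N|\le y/\sqrt t\big)\sim\sqrt{2/\pi}\,y\,t^{-1/2}$; what you wrote down, $2(1-\Phi(y/\sqrt t))$, is the \emph{complementary} probability, and it tends to $1$ (not to $0$ stretched-exponentially) when you substitute $y=t^{1/3}$. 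There is no exponential decay for a one-sided barrier: the gap process is reflected at $0$ but unbounded above, so staying positive up to time $t$ costs only a factor of order $t^{-1/2}$, and $\int_0^\infty t\,\Pr(\rho_1-\tilde\rho_0>t)\,dt$ cannot be controlled this way no matter how sharply you control the tail of $Y$ in your step (3).

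The repair is to recognise that the downward jumps of the gap process $Z_t=X_t-R_t$ are not merely ``helpful'' but essential, and the paper's Lemma~\ref{lem:rx} is built around them: $Z$ is an autonomous Markov process that jumps down at rate proportional to $Z_t$ with a uniformly distributed landing point, so one can couple it from above with a process $\widehat Z$ that is reset to $1$ at a rate bounded below by a constant; each excursion from $1$ reaches $0$ before the next reset with probability $p>0$, hence the hitting time of $0$ is bounded by a geometric number of i.i.d.\ exponential waiting times plus a conditioned hitting time, giving $\sup_x\E_x[\sigma^2]<\infty$. That uniformity in the starting point also makes your step (3) (the tail of $Y=X_{\tilde\rho_0}-R_{\tilde\rho_0}$) unnecessary. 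Your reduction to the gap process and the observation that jumps only shrink it are the correct first moves; what is missing is a mechanism that actually uses the jumps to beat the $t^{-1/2}$ tail of the jump-free reflected Brownian hitting time.
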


\noindent The proof is based on Lemma~\ref{lem:taubound} and the following
result. 

\begin{lemma} \label{lem:rx} Let $\sigma$ be the stopping time when
  the Brownian ratchet hits the moving reflection boundary for the
  first time, i.e.
  \begin{align} \label{eq:511} \sigma := \inf\{t >0: X_t=R_t\}.
  \end{align}
  Then, there exists a finite positive constant $C$ such that
  \begin{align*}
    \sup_x \E_x[ \sigma^2] < C.
  \end{align*}
\end{lemma}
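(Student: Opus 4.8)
\textbf{Proof proposal for Lemma~\ref{lem:rx}.}

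The plan is to reduce the claim to an estimate on a single reflected Brownian motion started far from its reflection boundary, and then combine this with the exponential tail of the inter-jump time $\tau$ from Lemma~\ref{lem:taubound}. Observe first that the hitting time $\sigma$ of the event $\{X_t = R_t\}$ cannot occur while $\mathcal X$ is strictly above $\mathcal R$ between two jumps, so $\sigma$ is at least the first jump time $\tau_1$; more usefully, after the first jump the new reflection boundary $R_{\tau_1}$ lies in $[R_0, X_{\tau_1-}]$ and the ratchet continues from $Y_1 = X_{\tau_1}-R_{\tau_1}$. The key structural fact is the one recorded in Definition~\ref{def:mc} and used in Proposition~\ref{P:exInv}: conditionally on $\FF_n$, the process restarts as a Brownian ratchet from $Y_n$, and $\E_x[Y_n]\le x+c$ uniformly in $n$. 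Thus I would set up a recursion: $\sigma$ is bounded by a sum $\sum_{k\ge 1}\eta_k$ of inter-jump times, where the sum runs until the ratchet first comes back to touch its boundary.

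The cleanest route is via the graphical construction of Section~\ref{sec:graph-repr}. Reading the ratchet off from $(\mathcal B, N^{1/2})$, the event $\{X_t = R_t\}$ is exactly the event $\{B_t = S_t\}$, i.e. the underlying Brownian motion $\mathcal B$ hits the current reflection level $S_t$. Between jumps $S_t$ is constant, so on each interval $[\tilde\tau_n,\tilde\tau_{n+1})$ the ratchet touches its boundary iff $\mathcal B$ returns to $S_n$ before the next Poisson point appears between the graph and $S_n$. Started from distance $Y_n = |B_{\tilde\tau_n} - S_n|$, reflected Brownian motion hits its boundary in a time whose square has a finite expectation that is bounded by $C_1(1 + Y_n^2)$ (this is a standard hitting-time estimate; $\E[\sigma_0^2]$ for BM started at distance $y$ from $0$ grows like $y^4$, but here we only need to reach the level \emph{before} the next jump, and one can bound things more crudely). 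Meanwhile $\E_x[(\rho_1-\rho_0)^2]$-type quantities are controlled by Lemma~\ref{lem:taubound}, which gives $\E_x[\tau^2]\le C_2$ uniformly in $x$.

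Concretely, I would argue as follows. Let $K$ be the (random) index of the first inter-jump interval on which $\mathcal B$ returns to its reflection level; then $\sigma \le \sum_{k=1}^{K}\eta_k$. On the interval following the $n$-th jump, the conditional probability (given $\FF_n$, in particular given $Y_n$) that $\mathcal B$ fails to hit $S_n$ before the next Poisson point is bounded away from $1$ — indeed, by the symmetry of Brownian motion and the homogeneity of $N^{1/2}$, conditionally on the next jump occurring, its location lies on the side of $S_n$ opposite to $B$ with probability $\ge \tfrac12$, and in that case the boundary jumps \emph{past} $B$ forcing a crossing, hence a touching, shortly after (this is the same mechanism exploited in the proof of Proposition~\ref{P:coupl}). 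So $K$ is stochastically dominated by a geometric random variable with success probability $\tfrac12$, uniformly in the starting configuration. Then by Cauchy--Schwarz and the uniform bound $\E_x[\eta_k^2]\le C_2$ from Lemma~\ref{lem:taubound},
\begin{align*}
  \E_x[\sigma^2] \le \E_x\Big[\Big(\sum_{k=1}^{K}\eta_k\Big)^2\Big]
  \le \E_x\Big[ K \sum_{k=1}^K \eta_k^2\Big]
  \le \sum_{m\ge 1} m \,\Pr(K\ge m)\, m\, C_2 \le C_2 \,\E[K^3] \le C,
\end{align*}
since a geometric variable has finite third moment; here the inner sum is re-indexed using that on $\{K\ge m\}$ the $m$-th interval contributes $\eta_m^2$ with conditional mean $\le C_2$, and the geometric tail of $K$ is summable against any polynomial. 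The constant $C$ depends only on $C_2$ and the geometric rate $\tfrac12$, hence not on $x$.

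The main obstacle is making the domination of $K$ by a geometric variable fully rigorous \emph{uniformly in the starting distance} $Y_n$: when $\mathcal B$ starts very far from $S_n$, it is unlikely to return before the next jump, but — crucially — the next jump then tends to land much closer to $\mathcal B$ (the Poisson intensity between graph and boundary is $\tfrac12|B_t - S_t|$, which is large), so the boundary chases $\mathcal B$ and the gap contracts geometrically on average. Quantifying "the boundary jumps past $B$ with probability $\ge\tfrac12$" regardless of the current gap is exactly the symmetry argument already used in Proposition~\ref{P:coupl}, so I would invoke that reasoning; the remaining care is to check that the \emph{time} spent waiting through non-crossing intervals also has a uniformly bounded second moment, which again follows from Lemma~\ref{lem:taubound} together with the geometric bound on the number of such intervals.
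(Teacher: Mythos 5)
Your overall architecture (a geometric number of attempts, each attempt lasting one inter-jump interval whose second moment is controlled by Lemma~\ref{lem:taubound}, then Cauchy--Schwarz) is the right shape and matches the spirit of the paper's proof. But the step that makes it work is wrong as stated. You claim that on each inter-jump interval the ratchet touches its boundary with probability at least $\tfrac12$, uniformly in the current gap, because ``conditionally on the next jump occurring, its location lies on the side of $S_n$ opposite to $B$ with probability $\ge\tfrac12$, and in that case the boundary jumps past $B$ forcing a crossing.'' This mechanism does not exist in the model: by construction the new reflection point is uniform on $[R_{t-},X_t]$, i.e.\ the Poisson point used lies in $[B_t\wedge S_n, B_t\vee S_n]$, strictly \emph{between} the old boundary and the Brownian path. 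The boundary never jumps past $B$, and after a jump the new gap $Y_{n+1}=(X_{\tau_{n+1}}-R_{\tau_n})(1-U)$ is a.s.\ strictly positive. The symmetry argument of Proposition~\ref{P:coupl} is about which of \emph{two} coupled boundaries a shared Poisson point falls between; it says nothing about a single boundary crossing its own Brownian motion. Consequently the per-interval probability of touching is the probability that $\mathcal B$ returns to $S_n$ \emph{before} the next Poisson point appears in the strip, and this probability is \emph{not} bounded below uniformly in $Y_n$: for large gaps the Poisson intensity in the strip is large and the return is unlikely. This is exactly the ``main obstacle'' you flag at the end, and it is not closed by the argument you give; the geometric domination of $K$, and hence the whole second-moment bound, is unproven.

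The paper closes this gap differently: it observes that $Z_t=X_t-R_t$ is an autonomous Markov process (reflected BM with downward jumps) and couples it to a dominating process $\widehat Z\ge Z$ that is \emph{reset to the fixed level $1$} at rate $1$ (absorbing the jumps of $Z$ landing below $1$ and adding an independent rate-$(1-Z_t)^+$ clock), while following the same Brownian increments between resets. From the level $1$, the Brownian path hits $0$ before the next rate-$1$ exponential reset with a fixed probability $p>0$, which yields the geometric number of attempts honestly and uniformly in $x$; the contribution of each attempt is an exponential time, not an $\eta_k$. If you want to repair your version, you would need to combine a contraction estimate for the gap (e.g.\ $\E[Y_{n+1}\mid\FF_n]\le\tfrac12(Y_n+c)$, as in Proposition~\ref{P:exInv}) with a lower bound on the touching probability valid only for \emph{small} gaps, which is essentially reinventing the paper's coupling.
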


\begin{proof}[Proof of Proposition~\ref{P:mombounds1}]
  Clearly, $\rho_1-\rho_0 = (\tilde\rho_0 - \rho_0) + (\rho_1 -
  \tilde\rho_0)$. Since $\tilde\rho_0-\rho_0 \stackrel d = \tau$ with
  $\tau$ from Definition~\ref{def:killedBrownian}, finiteness of the
  second moment of the first summand follows immediately from
  Lemma~\ref{lem:taubound}. Moreover, given $X_{\tilde\rho_0} -
  R_{\tilde\rho_0} = z$, we have that $\rho_1 - \tilde\rho_0$ is
  distributed as $\sigma$ for a Brownian ratchet started in $z$ with
  $\sigma$ from \eqref{eq:511}. Hence, finiteness of the second moment
  of the second summand follows from Lemma~\ref{lem:rx}. Thus, the
  proposition is proved.
\end{proof}

\begin{proof}[Proof of Lemma~\ref{lem:rx}]
  Recall the jump times $\tau_1, \tau_2,\dots$ of $\mathcal R$. We
  consider the process $\mathcal Z := (Z_{t\wedge \sigma})_{t\geq 0}$
  for $Z_t:=X_t-R_t$ which is started in $x$, and has discontinuities
  at $\tau_1, \tau_2,\dots$.  Note that $\sigma$ is the time at which
  $Z_t$ hits $0$ for the first time.

  Note that $\mathcal Z$ is an autonomous Markov process: locally it
  behaves like Brownian motion, reflected at 0 and each time there is
  a point in a Poisson point process on $[0;\infty)\times\mathbb R$ in
  $[0,Z_t]$ the process $Z_t$ restarts from this Poisson point,
  i.e.\ at discontinuity points the process $Z_t$ jumps down.

  Our goal is to bound $\sigma$ from above by a simpler random
  variable $\hat\sigma$. To this end we couple $\mathcal Z$ to a
  process $\widehat{\mathcal Z} = (\widehat Z_t)_{t\geq 0}$ which
  starts in $x$ and has the following dynamics: the local increments
  of $\widehat{\mathcal Z}$ follow exactly those of $\mathcal Z$,
  i.e.\, $\widehat{\mathcal Z}$ and $\mathcal Z$ use the same
  underlying Brownian motion. At jump times $\tau_n$ of $\mathcal Z$
  with $Z_{\tau_n}\leq 1$, we set $\widehat Z_{\tau_n}=1$. In
  addition, $\widehat Z$ jumps at rate $(1-Z_t)^+$ to 1. At jump times
  $\tau_n$ with $Z_{\tau_n}>1$, the process $\widehat{\mathcal Z}$
  does not jump. We denote the first hitting time of
  $\widehat{\mathcal Z}$ of 0 by $\widehat \sigma$. By this coupling,
  we have achieved the following:
  \begin{enumerate}
  \item At joint jump times $\tau$ of $\widehat{\mathcal Z}$ and
    $\mathcal Z$ we have $Z_\tau \leq \widehat Z_\tau=1$. Jump times
    $\tau$ exclusive to $\widehat{\mathcal Z}$ must satisfy
    $Z_\tau\leq 1=\widehat Z_\tau$. For a jump time $\tau$ exclusive
    to $\mathcal Z$, and if $Z_{\tau-}\leq \widehat Z_{\tau-}$, we
    have $Z_\tau \leq Z_{\tau-} \leq \widehat Z_{\tau-} = \widehat
    Z_{\tau }$. In particular, for all $t\geq 0$ we have $ Z_t \leq
    \widehat Z_t$.
  \item The process $\widehat{\mathcal Z}$ jumps at rate 1. If $\tau$
    is a jump time of $\widehat Z$, then $\widehat Z_t=1$.
  \end{enumerate}
  In particular, $\sigma\leq \widehat \sigma$, almost surely, by 1.,
  and it suffices to show that $\widehat \sigma$ has a finite second
  moment. Set $\hat\tau_0=0$ and denote the jump times of
  $\widehat{\mathcal Z}$ by $\hat \tau_1, \hat \tau_2,\dots$. In
  addition, let $(B_t)_{t\geq 0}$ be Brownian motion starting in $1$,
  set
  \begin{align*}
    \tau= \inf\{t > 0: B_t=0\}
  \end{align*}
  and
  \begin{align*}
    T_i = \hat\tau_{i+1}-\hat\tau_{i}, \; i \ge 0.
  \end{align*}
  It is clear that $T_0,T_1,\dots$ are iid random variables with rate
  one independent of $\widehat Z_t$. Independent of initial position
  $\widehat Z_0$ the process $\widehat{\mathcal Z}$ restarts from $1$
  at time $T_1$. Define $p:= \Pr_1(\tau < T_1) $ and let $M$ be
  geometrically distributed with parameter $p$ independent of $T_1,
  T_2,\dots$, i.e.\ $\Pr(M = n)= (1-p)p^n$, $n \ge 0$. Note that $M$ is
  the number of fail trials for the process $\widehat{\mathcal Z}$ to
  reach $0$, started in $1$, before the next jump. It follows that
  \begin{align*}
    \widehat \sigma & \le \sum_{i=0}^{M} T_i + \widetilde \tau.
  \end{align*}
  where $\widetilde \tau$ is distributed as $\tau$, conditioned to
  be smaller than $T_1$ and hence
  \begin{align*}
    \E_1[\widetilde \tau^2] = \frac{\E_1[\tau^2, \tau\leq 
      T_1]}{\Pr_1(\tau<T_1)} \leq \frac{\E_1[T_1^2]}{p} = \frac{2}{p}.
  \end{align*}
  Combining these results,
  \begin{align*}
    \E_x[\sigma^2] & \leq \E_x[\widehat \sigma^2] \leq \E\Big[
    \Big(\E\Big[\sum_{i=0}^M T_i + \widetilde\tau\Big|M\Big]\Big)^2\Big] \\
    & \leq \E\Big[ \Big(2(M+1) + (M+1)M + \frac 1 p(3+M)\Big)^2\Big]
    \\ & =: C < \infty
   \end{align*}
   The right hand side is independent of $x$, and we are done.
\end{proof}

\begin{proposition}[Moment bounds for $X_{\rho_1}-X_{\rho_0}$]
  \label{P:mombounds2}
  We have
  \begin{align} \label{eq:50} \E_x[X_{\rho_0}^2] & < \infty,
    \\\label{eq:50b} \E_x[(X_{\rho_1}-X_{\rho_0})^2] & < \infty\\
    \intertext{and for each $t \ge 0$}
    \label{eq:30}
    \E_x[(X_t-X_{\rho_{M_t}})^2 ] & \le 2
    \big(\E_x[(X_{\rho_1}-X_{\rho_0})^2] + \E_x[X_{\rho_0}^2]\big).  
  \end{align}
\end{proposition}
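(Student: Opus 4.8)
The three inequalities are established one at a time; \eqref{eq:50} is essentially free, \eqref{eq:50b} uses the two--phase structure of a single cycle, and \eqref{eq:30} is the technical heart. For \eqref{eq:50}, the point is that $\mathcal X$ acquires no local time at its moving boundary before $\rho_0$: writing $\mathcal X$ through its driving Brownian motion $B$ (with $B_0=0$) as $X_t=x+B_t+L_t$ with $L$ continuous, non-decreasing and supported on $\{X_t=R_t\}$, one has $X_t>R_t$ on $[0,\rho_0)$, hence $L\equiv 0$ there and $X_{\rho_0}=x+B_{\rho_0}$. For $x=0$ we have $\rho_0=0$; for $x>0$ we have $\rho_0=\sigma$ with $\sigma$ as in Lemma~\ref{lem:rx}, so $\E_x[\rho_0]\le\E_x[\rho_0^2]^{1/2}<\infty$. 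Optional stopping applied to the martingale $B^2_{\cdot\wedge\rho_0}-(\cdot\wedge\rho_0)$ gives $\E_x[B_{\rho_0}^2]=\E_x[\rho_0]$, whence $\E_x[X_{\rho_0}^2]\le 2x^2+2\E_x[\rho_0]<\infty$.

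For \eqref{eq:50b} I would split the first full cycle at the first jump of $\mathcal R$ after $\rho_0$, writing $X_{\rho_1}-X_{\rho_0}=(X_{\tilde\rho_0}-X_{\rho_0})+(X_{\rho_1}-X_{\tilde\rho_0})$. On $[\rho_0,\tilde\rho_0)$ the boundary is frozen at $R_{\rho_0}=X_{\rho_0}$, so by the strong Markov property $X_{\tilde\rho_0}-X_{\rho_0}\stackrel d=\widetilde B_{\tau-}$ for the killed reflected Brownian motion of Definition~\ref{def:killedBrownian} started at $0$; the Green--function representation from the proof of Lemma~\ref{lem:Btaux} gives $\E_0[\widetilde B_{\tau-}^2]=w^{-1}\psi(0)\int_0^\infty y^3 Ai(y)\,dy$, finite by the rapid decay \eqref{eq:57} of $Ai$. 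On $(\tilde\rho_0,\rho_1)$ we have $X>R$, so $\mathcal X$ follows $B$ and $X_{\rho_1}-X_{\tilde\rho_0}=B_{\rho_1}-B_{\tilde\rho_0}$; conditionally on the history at time $\tilde\rho_0$ the increment $\rho_1-\tilde\rho_0$ is distributed as $\sigma$ for a ratchet started at $X_{\tilde\rho_0}-R_{\tilde\rho_0}$, so Lemma~\ref{lem:rx} and optional stopping give $\E_x[(B_{\rho_1}-B_{\tilde\rho_0})^2]=\E_x[\rho_1-\tilde\rho_0]\le\sup_z\E_z[\sigma^2]^{1/2}<\infty$. Now $(a+b)^2\le 2a^2+2b^2$ finishes \eqref{eq:50b}.

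For \eqref{eq:30} I would first use the strong Markov property at $\rho_0$. On $\{t<\rho_0\}$ the process $X_s=x+B_s$ is a martingale, so $\E_x[(X_t-X_{\rho_0})^2;\,t<\rho_0]=\E_x[(B_{\rho_0}-B_{t\wedge\rho_0})^2]=\E_x[(\rho_0-t)^+]\le\E_x[\rho_0]\le\E_x[X_{\rho_0}^2]$. On $\{t\ge\rho_0\}$ one is left to bound $\E_0[(X_s-X_{\rho_{M_s}})^2]$ for the ratchet started at $0$ (so $\rho_0=0$), uniformly in $s$. The decisive monotonicity is that, since $\mathcal R$ is non-decreasing and $X\ge R$, one has $X_u\ge X_{\rho_n}=R_{\rho_n}$ for all $u\ge\rho_n$; hence $R_s\in[X_{\rho_{M_s-1}},X_{\rho_{M_s}}]$, and since $X_s-R_s\ge 0$ and $X_{\rho_{M_s}}-R_s\ge 0$,
\begin{align*}
  (X_s-X_{\rho_{M_s}})^2=\bigl((X_s-R_s)-(X_{\rho_{M_s}}-R_s)\bigr)^2\le(X_s-R_s)^2+(X_{\rho_{M_s}}-R_s)^2.
\end{align*}
For the first term, the excess $\mathcal Z=(X_t-R_t)_{t\ge 0}$ is the autonomous Markov process from the proof of Lemma~\ref{lem:rx}, with generator $\mathcal Lf(z)=\tfrac12 f''(z)+\tfrac12 z\int_0^1(f(zu)-f(z))\,du$ (Neumann at $0$), so $\mathcal L(z^2)=1-z^3/3$; Dynkin's formula and Jensen's inequality then give $\tfrac{d}{dt}\E_0[Z_t^2]\le 1-\tfrac13\E_0[Z_t^2]^{3/2}$, hence $\sup_t\E_0[Z_t^2]\le 3^{2/3}$. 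For the second term one conditions at time $s$, distinguishes whether $s$ precedes or follows the first jump of $\mathcal R$ in its cycle, and uses again that $\mathcal Z$ restarts (up to the current value $Z_s$): what remains is a Brownian increment over the time to the next hitting of the boundary --- controlled by $\sup_z\E_z[\sigma^2]<\infty$ (Lemma~\ref{lem:rx}) --- and the increment $\widetilde B_{\tau-}-z$ of a killed reflected Brownian motion started at $z$, controlled uniformly in $z$ via $|\widetilde B_{\tau-}-z|\le\sup_{u\le\tau}|B_u-z|$ together with the uniform exponential tail in Lemma~\ref{lem:taubound}. Assembling the pieces and absorbing the constants into $2\E_x[(X_{\rho_1}-X_{\rho_0})^2]$ (using that the law of the cycle increment does not depend on the starting point) yields \eqref{eq:30}.

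The step I expect to be the main obstacle is \eqref{eq:30}: the renewal cycle that contains a fixed time $t$ is length--biased, so one cannot simply feed the single--cycle bound \eqref{eq:50b} into a naive decomposition of $X_t-X_{\rho_{M_t}}$. The way around this is to condition at time $t$ and exploit the autonomous regeneration of the excess $\mathcal Z$, whose second moment is bounded uniformly in time by the Lyapunov estimate above; obtaining exactly the right--hand side of \eqref{eq:30}, rather than merely a finite constant, then amounts to keeping the bookkeeping of these estimates tight.
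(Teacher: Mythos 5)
For \eqref{eq:50} and \eqref{eq:50b} your argument is correct but takes a longer route than the paper's. Your Skorokhod decomposition $X_t=x+\hat B_t+L_t$ and the observation that $L$ is flat on $[0,\rho_0)$ (resp.\ on $(\tilde\rho_0,\rho_1)$) is sound, and the Green--function computation $\E_0[\widetilde B_{\tau-}^2]=w^{-1}\psi(0)\int_0^\infty y^3Ai(y)\,dy$ is correct. The paper's proof is slicker: from the graphical construction the step sequence $(S_0,\dots,S_n)$ is monotone on a renewal cycle (the driving Brownian motion cannot cross the current reflection level $S_n$ before $\rho_i$), so $X_{\rho_0}=|B_{\rho_0}|$ and, after restarting at $0$, $X_{\rho_1}=|B_{\rho_1}|$; both second moments then follow at once from Wald's second identity together with $\E_x[\rho_0]<\infty$ and $\E_x[\rho_1-\rho_0]<\infty$ (Lemma~\ref{lem:rx}, Proposition~\ref{P:mombounds1}). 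There is no need to split a cycle at $\tilde\rho_0$.

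For \eqref{eq:30} your approach is genuinely different, and the Lyapunov estimate for the autonomous excess $\mathcal Z$ is a real idea: the computation $\mathcal L(z^2)=1-z^3/3$ and the resulting $\sup_t\E_0[Z_t^2]\le 3^{2/3}$ are correct and would give a clean, uniform-in-$t$ control of $X_t-R_t$. However the proof does not close. First, the bound on the second term $\E_0[(X_{\rho_{M_s}}-R_s)^2]$ is only gestured at: the case distinction (before/after the first jump in the cycle), the claim that "what remains is a Brownian increment over $\sigma$", and the use of Lemma~\ref{lem:taubound} are not assembled into an inequality. Second, and more importantly, even granting a finite uniform bound $K$ for both terms, nothing in your argument makes $K\le 2\bigl(\E_x[(X_{\rho_1}-X_{\rho_0})^2]+\E_x[X_{\rho_0}^2]\bigr)$; "absorbing the constants" is not a derivation of \eqref{eq:30} as stated. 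The device you are missing, and which the paper uses, is that $X$ is a non-negative submartingale, hence so is $X_{\cdot\wedge\rho_1}^2$, whence $\E_0[X_t^2\ind{t<\rho_1}]\le\E_0[X_{t\wedge\rho_1}^2]\le\E_0[X_{\rho_1}^2]$ by optional stopping; combined with $(a-b)^2\le a^2+b^2$ for $a,b\ge 0$ this gives $\E_0[(X_r-X_{\rho_1})^2\ind{r<\rho_1}]\le 2\E_0[X_{\rho_1}^2]$ uniformly in $r$ in one line, with exactly the right constant, and sidesteps the Lyapunov analysis entirely. The analogous step with $\rho_0$ in place of $\rho_1$ handles the part $\ind{t<\rho_0}$. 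Your worry about the cycle containing $t$ being length--biased is a legitimate observation about the paper's telegraphic display $\E_x[(X_t-X_{\rho_{M_t}})^2\ind{t\ge\rho_0}]=\E_0[(X_t-X_{\rho_1})^2\ind{t<\rho_1}]$, which reads as an equality but is really a reduction via the strong Markov property at the last renewal before $t$ together with the uniform-in-$r$ single-cycle bound; spelling that renewal step out carefully is worthwhile, but the mechanism for it is the submartingale estimate above, not a Lyapunov bound.
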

\begin{proof}
  Note that $|X_{\rho_0}| \sim |B_{\rho_0}|$, which already shows that
  \[ \E_x[X_{\rho_0}^2] = \E_x[B_{\rho_0}^2] = \E_x[\rho_0] < \infty\]
  by the second Wald identity where we have used that $\E_x[\rho_0]< \infty$ by
  Lemma~\ref{lem:rx}.  
  
  Now, using the strong Markov property at time $\rho_0$ we have
  \begin{align*}
    \E_x[(X_{\rho_1}-X_{\rho_0})^2] = \E_0[X_{\rho_1}^2] = \E_0
    [B_{\rho_1}^2] =  \E_0[\rho_1] = \E_x[\rho_1-\rho_0] < \infty,
  \end{align*}
  where we have again used Wald's second identity that is applicable by 
  Proposition~\ref{P:mombounds1}. This proves \eqref{eq:50b}. 
  
   It remains to show \eqref{eq:30}. We have 
   \begin{align} \label{eq:19} 
     \E_x[(X_t-X_{\rho_{M_t}})^2 ] = \E_x[(X_t-X_{\rho_{M_t}})^2 \ind{t
       <\rho_0} ] + \E_x[(X_t-X_{\rho_{M_t}})^2 \ind{t \ge \rho_0} ].  
   \end{align}
   The first summand on the right hand side can be estimated as follows 
   \begin{align} \label{eq:20} 
     \begin{split}
       \E_x[(X_t-X_{\rho_0})^2 \ind{t <\rho_0} ] & \le \E_x[X_t^2 \ind{t <
         \rho_0}] + \E_x[X_{\rho_0}^2] \\ & \le \E_x[X_{t\wedge \rho_0}^2] + 
       \E_x[X_{\rho_0}^2] \le 2 \E_x[X_{\rho_0}^2], 
     \end{split}
   \end{align}
   where for the last inequality we used the fact that the stopped
   process $\big(X_{t \wedge \rho_0}^2\big)$ is a sub-martingale (as
   usual $a \wedge b = \min\{a,b\}$). Using a similar argument we have
   for the second summand on the right hand side of \eqref{eq:19}
   \begin{align}
     \label{eq:21} 
     \begin{split}
       \E_x[(X_t-X_{\rho_{M_t}})^2 \ind{t \ge \rho_0} ] &  =
       \E_0[(X_t-X_{\rho_1})^2 \ind{t < \rho_1} ] \le \E_0[X_t^2 \ind{t <
         \rho_1}] + \E_0[X_{\rho_1}^2]  \\ & \le 
       \E_0[X_{t \wedge \rho_1}^2] +  \E_0[X_{\rho_1}^2] \le 2
       \E_0[X_{\rho_1}^2] = 2\E_x[(X_{\rho_1}-X_{\rho_0})^2].  
     \end{split}
   \end{align} 
   Combining \eqref{eq:19}, \eqref{eq:20} and \eqref{eq:21} we obtain
   \eqref{eq:30}. 

 \end{proof}
 \begin{proposition}[Asymptotics of $A_t$ and $X_t-R_t$]  \label{prop:AsympA} 
   We have \[\frac{A_t}{\sqrt t} \to 0 \quad\text{ and } \quad
   \frac{X_t-R_t}{\sqrt{t}}\to 0 \quad \text{a.s.\ as $t \to \infty$}.\]  
 \end{proposition}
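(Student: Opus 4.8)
The plan is to reduce both convergences to the single statement that the oscillation of $\mathcal X$ on the regeneration interval containing $t$ grows more slowly than $\sqrt t$. For $n\ge 1$ put
\[
  \Delta_n := \sup_{\rho_{n-1}\le s\le\rho_n}\abs{X_s-X_{\rho_{n-1}}}.
\]
Take $t$ large enough that $t\ge\rho_0$, so that $M_t\ge 1$ and $\rho_{M_t-1}\le t<\rho_{M_t}$. Since $X_{\rho_n}=R_{\rho_n}$ for every $n$ (each $\rho_n$ is a time with $X=R$) and $\mathcal R$ is non-decreasing, one gets $X_t-R_t\le X_t-R_{\rho_{M_t-1}}=X_t-X_{\rho_{M_t-1}}\le\Delta_{M_t}$, and, because $t$ and $\rho_{M_t}$ both lie in $[\rho_{M_t-1},\rho_{M_t}]$, also $\abs{X_t-X_{\rho_{M_t}}}\le\abs{X_t-X_{\rho_{M_t-1}}}+\abs{X_{\rho_{M_t}}-X_{\rho_{M_t-1}}}\le 2\Delta_{M_t}$. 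Together with $A_t=X_{\rho_0}+X_t-X_{\rho_{M_t}}$ and $X_{\rho_0}<\infty$ a.s., it therefore suffices to show $\Delta_{M_t}/\sqrt t\to 0$ a.s.

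By the strong Markov property at the times $\rho_n$, at which $X_{\rho_n}-R_{\rho_n}=0$ so that the post-$\rho_n$ evolution always has the same law, the random variables $\Delta_1,\Delta_2,\dots$ are i.i.d. The heart of the argument is the bound $\E_x[\Delta_1^2]<\infty$. Writing $X_s-X_{\rho_0}=(X_s-R_s)+(R_s-R_{\rho_0})$ with both terms non-negative for $s\ge\rho_0$, and using that $s\mapsto R_s-R_{\rho_0}$ is non-decreasing with terminal value $R_{\rho_1}-R_{\rho_0}=X_{\rho_1}-X_{\rho_0}$, one gets $\Delta_1\le\sup_{\rho_0\le s\le\rho_1}(X_s-R_s)+(X_{\rho_1}-X_{\rho_0})$. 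In the graphical construction the reflection point of the driving Brownian motion $\mathcal B$ always lies between the running minimum and the running maximum of $\mathcal B$ (a short induction over the Poisson points defining it in \eqref{eq:Sn}), so $X_s-R_s$, being the distance of $\mathcal B$ to its current reflection point, is at most the range of $\mathcal B$ on $[\rho_0,s]$ and hence $\sup_{\rho_0\le s\le\rho_1}(X_s-R_s)\le 2\sup_{0\le u\le\rho_1-\rho_0}\abs{B_{\rho_0+u}-B_{\rho_0}}$. Since $\rho_1-\rho_0$ is a stopping time with $\E_x[\rho_1-\rho_0]<\infty$ by Proposition~\ref{P:mombounds1}, Doob's $L^2$-inequality applied to $\mathcal B$ stopped at $\rho_1-\rho_0$, together with Wald's second identity, gives $\E_x[(\sup_{0\le u\le\rho_1-\rho_0}\abs{B_{\rho_0+u}-B_{\rho_0}})^2]\le 4\,\E_x[\rho_1-\rho_0]<\infty$; combined with $\E_x[(X_{\rho_1}-X_{\rho_0})^2]<\infty$ from Proposition~\ref{P:mombounds2}, this yields $\E_x[\Delta_1^2]<\infty$.

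To finish: for every $\e>0$ we have $\sum_{n\ge1}\Pr_x(\Delta_1>\e\sqrt n)=\sum_{n\ge1}\Pr_x(\Delta_1^2>\e^2 n)\le\e^{-2}\E_x[\Delta_1^2]<\infty$, so by Borel--Cantelli $\limsup_n\Delta_n/\sqrt n\le\e$ a.s., and letting $\e\downarrow 0$ gives $\Delta_n/\sqrt n\to 0$ a.s. On the other hand $\rho_1-\rho_0,\rho_2-\rho_1,\dots$ are i.i.d.\ with finite mean $\mu\in(0,\infty)$, so by the strong Law of Large Numbers $\rho_n/n\to\mu$ and $M_t\to\infty$ a.s.; hence for $t\in[\rho_{n-1},\rho_n)$ with $n\ge 2$,
\[
  \frac{\Delta_{M_t}}{\sqrt t}=\frac{\Delta_n}{\sqrt t}\le\frac{\Delta_n}{\sqrt{\rho_{n-1}}}=\frac{\Delta_n}{\sqrt n}\cdot\sqrt{\frac{n}{\rho_{n-1}}}\xrightarrow{t\to\infty}0\quad\text{a.s.},
\]
which by the first paragraph proves both claims. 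The main obstacle is the moment estimate $\E_x[\Delta_1^2]<\infty$: once the within-interval oscillation of $\mathcal X$ is dominated by the range of the driving Brownian motion, the facts that $\rho_1-\rho_0$ is a stopping time of finite expectation (Proposition~\ref{P:mombounds1}) and that $X_{\rho_1}-X_{\rho_0}$ has finite second moment (Proposition~\ref{P:mombounds2}) do the rest, and the passage from $L^2$ to almost-sure control is the routine Borel--Cantelli step above.
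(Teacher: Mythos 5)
Your proposal is correct and follows essentially the same route as the paper's proof: decompose time into the regeneration cycles of Definition~\ref{def:cumu1}, show the per-cycle oscillation of $\mathcal X$ is an i.i.d.\ sequence with finite second moment (using Doob's $L^2$ maximal inequality together with Propositions~\ref{P:mombounds1} and \ref{P:mombounds2}), and conclude via the renewal theorem that the oscillation over the cycle containing $t$ is $o(\sqrt t)$. The only differences are cosmetic: you bound the oscillation by the range of the driving Brownian motion through the graphical construction (rather than applying Doob directly to the submartingale $X_{t\wedge\rho_1}$ as the paper does), and you prove the step $\Delta_n/\sqrt n\to 0$ by a direct Borel--Cantelli argument instead of citing Lemma~7 of Smith (1955).
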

 \begin{proof}
   The proof of the first assertion is an adaptation of the proof of
   Lemma~8 in \citep[][p.~26]{Smith:1955}. In view of \eqref{eq:50} it
   is enough to prove the first assertion for $X_0=0$. Then we need to
   show that
   \begin{align}
     \label{eq:7}
     \frac{X_t-X_{\rho_{M_t}}}{\sqrt t} \to 0 \; \text{ a.s.\ as $t \to 
       \infty$}.  
   \end{align}
   We set
   \[Y_{n}=\sup_{t \in [\rho_{n-1},\rho_{n}]}\abs{X_t-X_{\rho_{n}}}.\]
   Then $Y_1,Y_2, \dots$ is a sequence of independent identically
   distributed random variables with finite second moments. To prove
   this we note that
   \begin{align*}
     Y_1 \le X_{\rho_1} +\sup_{t \ge 0} \abs{X_{t \wedge \rho_1}}.    
   \end{align*}
   By Doob's maximal inequality 
   \begin{align*}
     \E_0\Big[ \sup_{t \ge 0}\abs{X_{t \wedge \rho_1}}^2\Big] \le 4 \E_0[X_{\rho_1}^2].    
   \end{align*}  
   Thus, finiteness of second moments of $Y_1$ follows from \eqref{eq:50b}.  
   Now  
   \begin{align}
     \label{eq:8}
     \frac{\abs{X_t-X_{\rho_{M_t}} }}{\sqrt t} \le \frac{Y_{M_t}}{\sqrt t} =
     \frac{Y_{M_t}}{\sqrt {M_t}} \frac{\sqrt{M_t}}{\sqrt t}.  
   \end{align}
   Note that by Theorem~1 in \cite{0041.45405} we have
   \begin{align} \label{eq:9} \lim_{t\to\infty}\frac{M_t}{t} =
     \frac{1}{\E[\rho_1-\rho_0]}>0, \; \text{a.s.}
   \end{align} Using Lemma~7 in \citep[][p.~26]{Smith:1955} we obtain
   $Y_{M_t}/\sqrt {M_t} \to 0$ almost surely and therefore
   \eqref{eq:7} holds.

   For the second assertion we have
   \begin{align*}
     \frac{(X_t-R_t)\ind{t < \rho_0}}t \le \frac{X_{t \wedge \rho_0}}{t} \to 0
     \; \text{ a.s. as $t\to \infty$}.  
   \end{align*}
    For $t \in [\rho_{n-1},\rho_n]$, $n\ge 1$ we have \[0 \le X_t -R_t \le 
   Y_n.\] 
   Thus,
   \begin{align*}
     \frac{(X_t-R_t)\ind{t \ge \rho_0}}t \to 0
     \; \text{ a.s. as $t\to \infty$}    
   \end{align*}
   follows by the same argument as before.  
 \end{proof}

\section{Proofs of Theorem~\ref{T:LLN} and \ref{T:CLT}}
\label{sec:proof}
In the case $\gamma=0$, Theorem 1 is true since $X_t \stackrel d =
|B_t|$ for a Brownian motion, started in $x$. Hence, we can assume
$\gamma>0$ in the rest of the proof.

We make use of the regeneration structure, set out in
Definition~\ref{def:cumu1}. We set 
\begin{align} \label{eq:18} 
  r=\E_x[\rho_1-\rho_0], && \mu=\E_x[X_{\rho_1}-X_{\rho_0}] &&
  \text{and} &&
  \beta^2=\Var_x\Big[X_{\rho_1}-X_{\rho_0}-\frac{(\rho_1-\rho_0)\mu}r\Big].
\end{align}
Here, $r, \mu$ and $\beta^2$ are independent of $x$ due to the
regeneration structure. By Propositions~\ref{P:mombounds1} and
\ref{P:mombounds2} the time and space increments $\rho_1-\rho_0$ and
$X_{\rho_1}-X_{\rho_0}$ have finite second moments.  As in
\eqref{eq:9} we have $ \lim_{t\to\infty}\frac{M_t}{t} = \frac1r$
almost surely.

From Proposition~\ref{prop:AsympA}, \eqref{eq:9} and Law of Large
Numbers applied to $S_n$ it follows that
\begin{align*}
  \frac{X_t}t= \frac{A_t}t + \frac{M_t}{t} \frac{S_{M_t}}{M_t} \to
  \frac{\mu}r\; \text{a.s.} \quad \text{as $t \to \infty$.}
\end{align*}
We will compute $\mu/r$ using the ratio limit theorem for Harris
recurrent Markov chains \citep[see e.g.][]{Revuz:1984}. Let $\pi$
denote the invariant distribution for $(\mathcal Y, \mathcal W, \eta)$
determined by Propositions \ref{P:exInv} and \ref{P:coupl}. To show
that this Markov chain is Harris recurrent we have to show that each
Borel subset of $\R^3_{\ge 0}$, say $B$, having positive Lebesgue
measure is visited infinitely often with probability $1$. We cannot
apply the Borel-Cantelli lemma directly, because the events
$\{(Y_n,W_n,\eta_n) \in B\}_{n \ge 1}$ are not independent. But events
lying between different regeneration points are independent.  For $n
\ge 0$ we set \[B_n=\{\exists m \ge 1 \text{ s.th.\ } \rho_{n-1}
<\tau_{m-1}, \tau_m < \rho_{n}) \text{ and } (Y_m,W_m,\eta_m) \in B
\}.  \] The events $B_n$ are independent and have the same positive
probability. Thus, applying the Borel-Cantelli lemma to
the sequence $(B_n)_{n \ge 0} $ we obtain
\begin{align*}
  \Pr_x\big( (Y_n,W_n,\eta_n) \in B \; \text{i.o.}\big) \ge
  \Pr_x\big( B_n \; \text{i.o.}\big) = 1.
\end{align*}
Now using the ratio limit theorem we obtain that 
\begin{align*}
  \frac{\mu}{r} = \lim_{t\to\infty}\frac{X_t}t= \lim_{t\to\infty}
  \frac{R_t}t=\lim_{n\to\infty}\frac{R_{\tau_n}}{\tau_n} = \lim_{n \to
    \infty} \frac{\sum_{k=1}^n W_k}{\sum_{k=1}^n \eta_k} =
  \frac{\E_\pi[W_1]}{\E_\pi[\eta_1]}\; \text{ a.s. $n\to\infty$}, 
\end{align*} 
where for the second equality we have used Proposition~\ref{prop:AsympA}. 
We recall that $\E_\pi[W_1]=\E_\pi[Y_1]$.  Furthermore, recalling that
we have used $\gamma=\tfrac 12$ in Section \ref{sec:invar-distr}, and
using \eqref{eq:54}, \eqref{eq:55} as well as Proposition
\ref{P:incrInv}, we obtain
\begin{align*}
  \frac\mu{r} = \frac{\E_\pi[Y_1]}{\E_\pi[\tau_1]} = -(2\gamma)^{1/3}
  \frac{3 Ai'(0)}{6 Ai(0)} = \frac{\gamma^{1/3}}{2^{2/3}}
  \frac{3^{1/3}\Gamma(2/3)}{\Gamma(1/3)}.
\end{align*}
by the scaling property, Proposition~\ref{P:scale}. This proves Theorem
\ref{T:LLN}.  

Using the CLT for cumulative processes \citep[see
e.g.][]{Smith:1955,Roginsky:1994} and
Proposition~\ref{prop:AsympA} we obtain that for all $x \in
\R$ 
\begin{align*}
  \lim_{t \to \infty} \Pr\biggl(\frac{X_t - t\mu/r}{\beta
    (t/r)^{1/2}} \le x\biggr) & = \lim_{t \to \infty}
  \Pr\biggl(\frac{A_t}{\beta (t/r)^{1/2}} + \frac{S_{M_t} -
    t\mu/r}{\beta (t/r)^{1/2}} \le x\biggr) \\ & = \lim_{t \to
    \infty} \Pr\biggl(\frac{S_{M_t} - t\mu/r}{\beta (t/r)^{1/2}} \le
  x\biggr) = \Phi(x),
\end{align*}
where $\Phi$ denotes the distribution function of the standard normal
distribution, and $\beta^2$ and $r$ are as defined in \eqref{eq:18}.
Hence, Theorem 2 holds for $\sigma = \beta/\sqrt{r}$.

\subsection*{Acknowledgement} We thank Jens Timmer for bringing our
attention to models of Brownian ratchet in cellular biology and Heinz
Weisshaupt for fruitful discussion. We are grateful to Gerhard Keller
who pointed us to an incorrect argument in an earlier version and two
referees for helping us to improve our manuscript.


\begin{thebibliography}{}

\bibitem[\protect\citeauthoryear{Abdolvahab, Roshani, Nourmohammad, Sahimi, and
  Tabar}{Abdolvahab et~al.}{2008}]{pmid19102558}
Abdolvahab, R.~H., F.~Roshani, A.~Nourmohammad, M.~Sahimi, and M.~R. Tabar
  (2008).
\newblock {{A}nalytical and numerical studies of sequence dependence of passage
  times for translocation of heterobiopolymers through nanopores}.
\newblock {\em J. Chem. Phys.\/}~{\em 129}, 235102--235102--8.

\bibitem[\protect\citeauthoryear{Abramowitz and Stegun}{Abramowitz and
  Stegun}{1972}]{AbramowitzStegun:1972}
Abramowitz, M. and I.~A. Stegun (1972).
\newblock {\em Handbook of Mathematical Functions: with Formulas, Graphs, and
  Mathematical Tables\/} (10. print., with corr. ed.).
\newblock New York: Dover Publications.

\bibitem[\protect\citeauthoryear{Ambjornsson, Lomholt, and Metzler}{Ambjornsson
  et~al.}{2005}]{Amb2005}
Ambjornsson, T., M.~A. Lomholt, and R.~Metzler ({2005}).
\newblock {Directed motion emerging from two coupled random processes:
  translocation of a chain through a membrane nanopore driven by binding
  proteins}.
\newblock {\em {J. of Physics--condensed matter}\/}~{\em {17}\/}({47, Sp. Iss.
  SI}), {S3945--S3964}.

\bibitem[\protect\citeauthoryear{Borodin and Salminen}{Borodin and
  Salminen}{2002}]{BorodinSalminen:2002}
Borodin, A.~N. and P.~Salminen (2002).
\newblock {\em Handbook of Brownian Motion -- Facts and Formulae\/} (second
  ed.).
\newblock Birkh\"auser Verlag.

\bibitem[\protect\citeauthoryear{Budhiraja and Fricks}{Budhiraja and
  Fricks}{2006}]{BudhirajaFricks2006}
Budhiraja, A. and J.~Fricks (2006).
\newblock Molecular motors, {B}rownian ratchets, and reflected diffusions.
\newblock {\em Discrete and Continuous dynamical systems B\/}~{\em 6\/}(4),
  711--734.

\bibitem[\protect\citeauthoryear{Doob}{Doob}{1948}]{0041.45405}
Doob, J.~L. (1948).
\newblock {Renewal theory from the point of view of the theory of probability.}
\newblock {\em Trans. Am. Math. Soc.\/}~{\em 63}, 422--438.

\bibitem[\protect\citeauthoryear{Elston}{Elston}{2002}]{Elston2002}
Elston, T.~C. (2002).
\newblock {{T}he brownian ratchet and power stroke models for posttranslational
  protein translocation into the endoplasmic reticulum}.
\newblock {\em Biophys. J.\/}~{\em 82}, 1239--1253.

\bibitem[\protect\citeauthoryear{Feynman, Leighton, and Sands}{Feynman
  et~al.}{1963}]{FeynmanLeightonSWands1963}
Feynman, R., R.~Leighton, and M.~Sands (1963).
\newblock {\em The Feynman Lectures on Physics}.
\newblock Addison-Wesley.

\bibitem[\protect\citeauthoryear{Fitzsimmons and Pitman}{Fitzsimmons and
  Pitman}{1999}]{0962.60067}
Fitzsimmons, P. and J.~Pitman (1999).
\newblock {Kac's moment formula and the Feynman-Kac formula for additive
  functionals of a Markov process.}
\newblock {\em Stochastic Processes Appl.\/}~{\em 79\/}(1), 117--134.

\bibitem[\protect\citeauthoryear{Janson}{Janson}{2007}]{Janson:07}
Janson, S. (2007).
\newblock Brownian excursion area, {W}right's constants in graph enumeration,
  and other {B}rownian areas.
\newblock {\em Probability Surveys\/}~{\em 4}, 80--145.

\bibitem[\protect\citeauthoryear{Kac}{Kac}{1946}]{Kac:1946}
Kac, M. (1946).
\newblock On the average of a certain wiener functionals and a related limit
  result in calculus of probability.
\newblock {\em Trans. Amer. Math. Soc.\/}~{\em 59}, 401--414.

\bibitem[\protect\citeauthoryear{Liebermeister, Rapoport, and
  Heinrich}{Liebermeister et~al.}{2001}]{LiebermeisterRapoportHeinrich:2001}
Liebermeister, W., T.~A. Rapoport, and R.~Heinrich (2001).
\newblock Ratcheting in post-translational protein translocation: A
  mathematical model.
\newblock {\em J. Mol. Biol.\/}~{\em 305}, 643--656.

\bibitem[\protect\citeauthoryear{Liggett}{Liggett}{1985}]{Liggett:85}
Liggett, T.~M. (1985).
\newblock {\em Interacting Particle Systems}.
\newblock Springer-Verlag.

\bibitem[\protect\citeauthoryear{Matlack, Misselwitz, Plath, and
  Rapoport}{Matlack et~al.}{1999}]{Matlack1999}
Matlack, K.~E., B.~Misselwitz, K.~Plath, and T.~A. Rapoport (1999).
\newblock {{B}i{P} acts as a molecular ratchet during posttranslational
  transport of prepro-alpha factor across the {E}{R} membrane}.
\newblock {\em Cell\/}~{\em 97}, 553--564.

\bibitem[\protect\citeauthoryear{Peskin, Odell, and Oster}{Peskin
  et~al.}{1993}]{PeskinOdellOster:1993}
Peskin, C.~S., G.~M. Odell, and G.~F. Oster (1993).
\newblock Cellular motions and thermal fluctuations: The brownian ratchet.
\newblock {\em Biophys. J.\/}~{\em 65}, 316--324.

\bibitem[\protect\citeauthoryear{Revuz}{Revuz}{1984}]{Revuz:1984}
Revuz, D. (1984).
\newblock {\em Markov Chains\/} (second ed.).
\newblock American Elsevier, North-Holland.

\bibitem[\protect\citeauthoryear{Roginsky}{Roginsky}{1994}]{Roginsky:1994}
Roginsky, A.~L. (1994).
\newblock A central limit theorem for cumulative processes.
\newblock {\em Adv. Appl. Prob.\/}~{\em 26}, 104--121.

\bibitem[\protect\citeauthoryear{Simon, Peskin, and Oster}{Simon
  et~al.}{1992}]{SimonPeskinOster1992}
Simon, S.~M., C.~S. Peskin, and G.~F. Oster (1992).
\newblock {{W}hat drives the translocation of proteins?}
\newblock {\em Proc. Natl. Acad. Sci. U.S.A.\/}~{\em 89}, 3770--3774.

\bibitem[\protect\citeauthoryear{Smith}{Smith}{1955}]{Smith:1955}
Smith, W.~L. (1955).
\newblock Regenerative stochastic processes.
\newblock {\em Proc. Roy. Soc. London. Ser. A.\/}~{\em 232}, 6--31.

\bibitem[\protect\citeauthoryear{{van der Hofstad}}{{van der
  Hofstad}}{1998}]{vanderHofstad:1998}
{van der Hofstad}, R. (1998).
\newblock The constants in the central limit theorem for the one-dimensional
  {E}dwards model.
\newblock {\em J. Statist. Phys.\/}~{\em 90\/}(5-6), 1295--1310.

\bibitem[\protect\citeauthoryear{{van der Hofstad}, {den Hollander}, and
  K\"onig}{{van der Hofstad} et~al.}{1997}]{vdHofstad_dHollander_Koenig:1997}
{van der Hofstad}, R., F.~{den Hollander}, and W.~K\"onig (1997).
\newblock Central limit theorem for the {E}dwards model.
\newblock {\em Ann. Prob.\/}~{\em 25\/}(2), 573--597.

\bibitem[\protect\citeauthoryear{Westwater}{Westwater}{1985}]{Westwater:1985}
Westwater, J. (1985).
\newblock On {E}dwards model for polymer chains.
\newblock In {\em Trends and Developments in the Eighties. Bielefeld Encounters
  in Mathematical Physics IV/V (S. Albeverio and P. Blanchard, eds.).} World
  Scientific, Singapore.

\end{thebibliography}

\end{document}